\newcounter{thm}
\newtheorem{lemma}[thm]{\bf Lemma}
\newtheorem{fact}[thm]{\bf Fact}
\newtheorem{theorem}[thm]{\bf Theorem}
\newtheorem{corollary}[thm]{\bf Corollary}
\newtheorem{definition}{Definition}
\newtheorem{assumption}{Assumption}
\begin{document}

\title{A {Universal} Empirical Dynamic Programming Algorithm for Continuous {State} MDPs}


\author{William B. Haskell, Rahul Jain, Hiteshi Sharma and Pengqian Yu 
\thanks{W.B. Haskell and P. Yu are with the Department of Industrial and Systems
Engineering, National University of Singapore.}
\thanks{Rahul Jain, and Hiteshi Sharma are with EE Department, University
of Southern California. The second and third authors' work was supported by an ONR Young Investigator Award \#N000141210766 and by NSF Award  CCF-1817212. {A preliminary version of this paper appeared in CDC 2017 \cite{haskell2017randomized}.}}
\thanks{Manuscript submitted: August 20, 2017. Revised: \today} }

\maketitle

\begin{abstract}
We propose universal randomized function approximation-based
empirical value learning (EVL) algorithms for Markov decision processes.
The `empirical' nature comes from each iteration being done empirically
from samples available from simulations of the next state. This makes
the Bellman operator a random operator. A parametric and a non-parametric
method for function approximation using a parametric function space
and a Reproducing Kernal Hilbert Space (RKHS) respectively are then
combined with EVL. Both function spaces have the universal function
approximation property. Basis functions are picked randomly. Convergence
analysis is done using a random operator framework with techniques
from the theory of stochastic dominance. Finite time sample complexity
bounds are derived for both universal approximate dynamic programming
algorithms. Numerical experiments support the versatility and {computational tractability}
of this approach. 
\end{abstract}


\begin{IEEEkeywords}
Continuous state space MDPs; Dynamic programming; Reinforcement Learning. 
\end{IEEEkeywords}

\section{Introduction}

\label{sec:intro}

There exist a wide variety of approximate dynamic programming (DP)
\cite[Chapter 6]{bertsekas2011dynamic}, \cite{powell2007approximate}
and reinforcement learning (RL) algorithms \cite{sutton1998reinforcement}
for finite state space Markov decision processes (MDPs). But many
real-world problems of interest have either a continuous state space,
or very large state space that it is best approximated as one. Action space will be considered finite.
Approximate DP and RL algorithms do exist for continuous state space MDPs but choosing
which one to employ is an art form: different techniques (state space
aggregation and function approximation \cite{devore1998nonlinear})
and algorithms work for different problems {\cite{Bertsekas_ADP_2005,Bertsekas_ADP_2010,Powell_ADP_2007}}, and universally applicable
algorithms are lacking. For example, fitted value iteration \cite{munos2008finite}
is very effective for some problems but requires the choice of an
appropriate basis functions for good approximation. 
{Most of the existing work on approximate dynamic programming (ADP) requires domain knowledge of the problem at hand for effective implementation. Here, we are interested in ADP methods which are effective without any prior problem knowledge.}

In this paper, we propose approximate DP algorithms for continuous
state space MDPs with finite action space that are universal (approximating function space
can provide arbitrarily good approximation for any problem), computationally
tractable, simple to implement and yet we have non-asymptotic sample
complexity bounds. The first is accomplished by picking functions
spaces for approximation that are dense in the space of continuous
functions {(i.e., for any continuous function $f$, and $\epsilon>0$, there is an element of
our approximating function space that is within $\epsilon$ of $f$ in the sup-norm.)}
The second goal is achieved by relying on randomized selection
of basis functions for approximation and also by `empirical' dynamic
programming \cite{haskell2016empirical}. The third is enabled because
standard Python routines can be used for function fitting and the
fourth is by analysis in a random operator framework which provides
non-asymptotic rate of convergence and sample complexity bounds.


There is a large body of well-known literature on reinforcement learning
and approximate dyamic programming for continuous state space MDPs.
We discuss the most directly related. In \cite{rust1997using}, a
sampling-based state space aggregation scheme combined with sample
average approximation for the expectation in the Bellman operator
is proposed. Under some regularity assumptions, the approximate value
function can be computed at any state and an estimate of the expected
error is given. But the algorithm seems to suffer from poor numerical
performance. A linear programming-based constraint-sampling approach
was introduced in \cite{de2004constraint}. Finite sample error guarantees,
with respect to this constraint-sampling distribution are provided
but the method suffers from issues of feasibility. The closest paper
to ours is \cite{munos2008finite} that does function fitting with
a given basis and does `empirical' value iteration in each step. Unfortunately,
it is not a universal method as approximation quality depends on the
function basis picked. Other papers worth noting are \cite{ormoneit2002kernel}
that discusses kernel-based value iteration and the bias-variance
tradeoff, and \cite{grunewalder2012modelling} that proposed a kernel-based
algorithm with random sampling of the state and action spaces, and
proves asymptotic convergence. {Other related works worth mentioning are \cite{szepesvari2001efficient,munos2007performance} (approximate value iteration), \cite{de2003linear,bhat2012non} (the LP approach to approximate DP) and \cite{munos2003error,konda2000actor,jain2010simulation} (approximate policy iteration). Recent applications stress policy gradient methods \cite{sutton2000policy,peters2016policy} and deep learning-based function approximation \cite{mnih2015human} for which theoretical performance guarantees for general problems are not available. The method presented in this paper may be seen as another alternative. }

This paper is inspired by the `random function' approach that uses
randomization to (nearly) solve otherwise intractable problems (see
e.g., \cite{rahimi2008uniform,rahimi2009weighted}) and the `empirical'
approach that reduces computational complexity of working with expectations
\cite{haskell2016empirical}. We propose two new algorithms. For the first
parametric approach, we pick a parametric function family. In each
iteration a number of functions are picked randomly for function fitting
by sampling the parameters. {A preliminary version of this for $l_2$ function fitting appeared in \cite{haskell2017randomized}. }
For the second non-parametric approach,
we pick a RKHS for approximation. Both function spaces are dense in
the space of continuous functions. In each iteration, we sample a
few states from the state space. Empirical value learning (EVL) is
then performed on these states. Each step of EVL involves approximating
the Bellman operator with an empirical (random) Bellman operator by
plugging a sample average approximation from simulation for the expectation.
This is akin to doing stochastic approximations with step size one.
{We employ a probabilistic convergence analysis technique of iterated random operators based on stochastic dominance that we developed in \cite{haskell2016empirical}. This method is general in the sense that not only can we handle various norms, but also various random contractive operators.}

The main contribution of this paper is development of randomized function
approximation-based (offline) dynamic programming algorithms that
are universally applicable (i.e., do not require appropriate choice
of basis functions for good approximation). A secondary contribution
is further development of the random operator framework for convergence
analysis in the $\mathcal{L}_{p}-$norm that also yields finite time
sample complexity bounds.

The paper is organized as follows. Section \ref{sec:prelim} presents preliminaries including the continuous state space MDP model and the empirical dynamic programming framework for finite state MDPs introduced in \cite{haskell2016empirical}. Section \ref{sec:algos} presents two empirical value learning algorithms - first, a randomized parametric function fitting method, and second, a non-parametric randomized function fitting in an RKHS space. We also provide statements of main theorems about non-asymptotic error guarantees. Section \ref{sec:analysis} presents a unified analysis of the two algorithms in a random operator framework. Numerical results are reported in Section \ref{sec:numerical}. Supplemental proofs are relegated to the appendix.

\section{Preliminaries}\label{sec:prelim}

Consider a discrete time discounted MDP given by the 5-tuple, $\left(\mathbb{S},\,\mathbb{A},\,Q,\,c,\,\gamma\right)$.
The state space $\mathbb{S}$ is a compact subset of $\mathbb{R}^{d}$
with the Euclidean norm, with corresponding Borel $\sigma-$algebra
$\mathcal{B}\left(\mathbb{S}\right)$. Let $\mathcal{F}\left(\mathbb{S}\right)$
be the space of all $\mathcal{B}\left(\mathbb{S}\right)-$measurable
bounded functions $f\text{ : }\mathbb{S}\rightarrow\mathbb{R}$ in
the supremum norm $\|f\|_{\infty}:=\sup_{s\in\mathbb{S}}|f\left(s\right)|$.
Moreover, let $\mathcal{M}\left(\mathbb{S}\right)$ be the space of
all probability distributions over $\mathbb{S}$ and define the $\mathcal{L}_{p}$
norm as {$\|f\|_{p,\,\mu}^{p}:=\left(\int_{\mathbb{S}}|f\left(s\right)|^{p}\mu\left(ds\right)\right)$
for $p\in[1,\,\infty)$} and given $\mu\in\mathcal{M}\left(\mathbb{S}\right)$. 
We assume that the action space $\mathbb{A}$ is finite. The transition law $Q$ governs the system evolution. For $B\in\mathcal{B}\left(\mathbb{S}\right)$,
$Q\left(B\,\vert\,s,\,a\right)$ is the probability of next visiting
the set $B$ given that action $a\in\mathbb{A}$ is chosen in state
$s\in\mathbb{S}$. The cost function $c\mbox{ : }\mathbb{S}\times\mathbb{A}\rightarrow\mathbb{R}$
is a bounded measurable function that depends on state-action pairs. Finally,
$\gamma\in\left(0,\,1\right)$ is the discount factor.

We will denote by $\Pi$ the class of \textit{stationary deterministic
Markov policies}: mappings $\pi\mbox{ : }\mathbb{S}\rightarrow\mathbb{A}$
which only depend on history through the current state. For a given
state $s\in\mathbb{S}$, $\pi\left(s\right)\in\mathbb{A}$ is the
action chosen in state $s$ under the policy $\pi$. The state and
action at time $t$ are denoted $s_{t}$ and $a_{t}$, respectively.
Any policy $\pi\in\Pi$ and initial state $s\in\mathbb{S}$ determine
a probability measure $P_{s}^{\pi}$ and a stochastic process $\left\{ \left(s_{t},\,a_{t}\right),\,t\geq0\right\} $
defined on the canonical measurable space of trajectories of state-action
pairs. The expectation operator with respect to $P_{s}^{\pi}$ is
denoted $\mathbb{E}_{s}^{\pi}\left[\cdot\right]$.

We will assume that the cost function $c$ satisfies $|c\left(s,\,a\right)|\leq c_{\max}<\infty$
for all $\left(s,\,a\right)\in\mathbb{S}\times\mathbb{A}$. Under
this assumption, $\|v^{\pi}\|_{\infty}\leq v_{\max}:=c_{\max}/\left(1-\gamma\right)$
where $v^{\pi}$ is the value function for policy $\pi\in\Pi$ defined
as $v^{\pi}\left(s\right)=\mathbb{E}_{s}^{\pi}\left[\sum_{t=0}^{\infty}\gamma^{t}c\left(s_{t},\,a_{t}\right)\right],\,\forall s\in\mathbb{S}$.
For later use, we define $\mathcal{F}\left(\mathbb{S};\,v_{\max}\right)$
to be the space of all functions $f\in\mathcal{F}\left(\mathbb{S}\right)$
such that $\|f\|_{\infty}\leq v_{\max}$.

The optimal value function is $v^{*}\left(s\right):=\inf_{\pi\in\Pi}\mathbb{E}_{s}^{\pi}\left[\sum_{t=0}^{\infty}\gamma^{t}c\left(s_{t},\,a_{t}\right)\right],\,\forall s\in\mathbb{S}.$
To characterize the optimal value function, we define the Bellman
operator $T\mbox{ : }\mathcal{F}\left(\mathbb{S}\right)\rightarrow\mathcal{F}\left(\mathbb{S}\right)$
via 
\[
\left[T\,v\right]\left(s\right):=\min_{a\in\mathbb{A}}\left\{ c\left(s,\,a\right)+\gamma\,\mathbb{E}_{X\sim Q\left(\cdot\,\vert\,s,\,a\right)}\left[v\left(X\right)\right]\right\} ,\,\forall s\in\mathbb{S}.
\]
It is well known that the optimal value function $v^{*}$ is a fixed point
of $T$, i.e. $T\,v^{*}=v^{*}$ {\cite[Theorem 6.2.5]{puterman2014markov}.} Classical value iteration is based
on iterating $T$ to obtain a fixed point, it produces a sequence
$(v_{k})_{k\geq0}\subset\mathcal{F}\left(\mathbb{S}\right)$ given
by $v_{k+1}=T\,v_{k},\,k\geq0.$ Also, we know that $(v_{k})_{k\geq0}$
converges to $v^{*}$ geometrically in $\|\cdot\|_{\infty}$.

We are interested in approximating the optimal value function $v^{*}$
within a tractable class of approximating functions $\mathcal{F}\subset\mathcal{F}\left(\mathbb{S}\right)$.
We have the following definitions which we use to measure the approximation
power of $\mathcal{F}$ with respect to $T$. {We define 
\[d_{p,\,\mu}\left(\mathcal{G},\mathcal{F}\right):=\sup_{g\in\mathcal{G}}\inf_{f\in\mathcal{F}}\|f-g\|_{p,\,\mu}\]
to be the distance between two function classes; then
$d_{p,\,\mu}\left(T\,\mathcal{F},\,\mathcal{F}\right)$
is the inherent $\mathcal{L}_{p}$ \textit{Bellman error for the
function class} $\mathcal{F}$. Similarly, defining
\[
d_{\infty}\left(\mathcal{G},\mathcal{F}\right):=\sup_{g\in\mathcal{G}}\inf_{f\in\mathcal{F}}\|f-g\|_{\infty}
\]
gives $d_{\infty}\left(T\mathcal{F},\mathcal{F}\right)$ as the inherent $\mathcal{L}_{\infty}$
Bellman error for an approximating class $\mathcal{F}$.}

We often compare $\mathcal{F}$ to the Lipschitz continuous functions
$\text{Lip}\left(L\right)$ defined as 
\[
\left\{ f\in\mathcal{F}\left(\mathbb{S}\right)\text{ : }|f\left(s\right)-f\left(s'\right)|\leq L\,\|s-s'\|,\,\forall s,\,s'\in\mathbb{S}\right\} .
\]
{In our case, we say that an approximation class $\mathcal{F}$ is
\textit{universal} if $d_{\infty}\left(\text{Lip}\left(L\right),\,\mathcal{F}\right)=0$
for all $L\geq0$. Note that on a compact state space $\mathbb{S}$,
universality in the supremum norm implies universality in the $\mathcal{L}_{1}$
and $\mathcal{L}_{2}$ norms as well.} 

One of the difficulties of dynamic programming algorithms like value
iteration above is that each iteration of the Bellman operator involves
computation of an expectation which may be expensive. Thus, in \cite{haskell2016empirical},
we proposed replacing the Bellman operator with an empirical (or random)
Bellman operator, 
\[
\left[\hat{T}_{n}v\right]\left(s\right):=\min_{a\in\mathbb{A}}\left\{ c\left(s,\,a\right)+\frac{\gamma}{n}\sum_{i=1}^{n}[v(X_{i})]\right\} ,
\]
where $X_{i}$ are samples of the next state from $Q\left(\cdot\,\vert\,s,\,a\right)$
which can be obtained from simulation. Now, we can iterate the empirical
Bellman operator, 
\[
v_{k+1}=\hat{T}_{n}v_{k}, ~~~\forall k \geq 0,
\]
an algorithm we called Empirical Value Iteration (EVI). The sequence
of iterates $\{{v}_{k}\}$ is a random process. Since $T$ is
a contractive operator, its iterates converge to its fixed point
$v^{*}$. The random operator $\hat{T}_{n}$ may be expected to inherit
the contractive property in a probabilistic sense and its iterates
converge to some sort of a probabilitic fixed point. We introduce
$(\epsilon,\delta)$ versions of two such notions introduced in \cite{haskell2016empirical}.

\begin{definition} A function $\hat{v}:\mathbb{S}\to\mathbb{R}$
is an $(\epsilon,\delta)$-strong probabilistic fixed point for a
sequence of random operators $\{\hat{T}_{n}\}$ if there exists an
$N$ such that for all $n>N$, 
\[
\mathbb{P}\left(||\hat{T}_{n}\hat{v}-\hat{v}||>\epsilon\right)<\delta.
\]
\end{definition} It is called a \textit{strong probabilistic fixed
point}, if the above is true for every positive $\epsilon$ and $\delta$.

\begin{definition} A function $\hat{v}:\mathbb{S}\to\mathbb{R}$
is an $(\epsilon,\delta)$-weak probabilistic fixed point for a sequence
of random operators $\{\hat{T}_{n}\}$ if there exist $N$ and $K$
such that for all $n>N$ and all $k>K$, 
\[
\mathbb{P}\left(||\hat{T}_{n}^{k}v_0-\hat{v}||>\epsilon\right)<\delta,~~~\forall v_0\in\mathcal{F}\left(\mathbb{S}\right).
\]
\end{definition} It is called a \textit{weak probabilistic fixed
point}, if the above is true for every positive $\epsilon$ and $\delta$.
Note that the stochastic iterative algorithms such as EVL often find
the weak probabilistic fixed point of $\{\hat{T}_{n}\}$ whereas what
we are looking for is $v^{*}$, the fixed point of $T$. In \cite{haskell2016empirical},
it was shown that asymptotically the weak probabilistic fixed point
of $\{\hat{T}_{n}\}$ coincides with its strong probabilistic fixed
points which coincide with the fixed point of $T$ under certain fairly
weak assumptions and a natural relationship between $T$ and $\{\hat{T}_{n}\}$
\[
\lim_{n\to\infty}\mathbb{P}\left(||\hat{T}_{n}{v}-T{v}||>\epsilon\right)=0,~~~\forall v\in\mathcal{F}\left(\mathbb{S}\right).
\]
This implies that stochastic iterative algorithms such as EVL will
find approximate fixed points of $T$ with high probability.



\section{The Algorithms and Main Results}

\label{sec:algos}

When the state space $\mathbb{S}$ is very large, or even uncountable,
exact dynamic programming methods are not practical,  or even feasible.
Instead, one must use a variety of approximation methods. In particular,
function approximation (or fitting the value function with a fixed
function basis) is a common technique. The idea is to sample a finite
set of states from $\mathbb{S}$, approximate the Bellman update at
these states, and then extend to the rest of $\mathbb{S}$ through
function fitting similar to \cite{munos2008finite}. Furthermore,
the expectation in the Bellman operator, for example, is also approximated
by taking a number of samples of the next state. There are two main
difficulties with this approach: First, the function fitting depends
on the function basis chosen, making the results problem-dependent.
Second, with a large basis (for good approximation), function fitting
can be computationally expensive.

In this paper, we aim to address these issues by first picking universal
approximating function spaces, and then using randomization to pick
a smaller basis and thus reduce the computational burden of the function
fitting step. We consider two functional families, one is a parametric
family $\mathcal{F}(\Theta)$ parameterized over parameter space $\Theta$
and the other is a non-parametric regularized RKHS. By $\mu\in\mathcal{M}\left(\mathbb{S}\right)$,
we will denote a probability distribution from which to sample states
in $\mathbb{S}$, and by a $\mathcal{F}\subset\mathcal{F}\left(\mathbb{S};\,v_{\max}\right)$
we will denote a functional family in which to do value function approximation.

Let us denote by $(v_{k} )_{k\geq0}\subset\mathcal{F}\left(\mathbb{S};\,v_{\max}\right)$,
the iterates of the value functions produced by an algorithm and a
sample of size $N\geq1$ from $\mathbb{S}$ is denoted $s^{1:N}=\left(s_{1},\ldots,\,s_{N}\right)$.
The empirical $p-$norm of $f$ is defined as $\|f\|_{p,\,\hat{\mu}}^{p}:=\frac{1}{N}\sum_{n=1}^{N}|f\left(s_{n}\right)|^{p}$
for $p\in[1,\,\infty)$ and as $\|f\|_{\infty,\,\hat{\mu}}:=\sup_{n=1,\ldots,\,N}|f\left(s_{n}\right)|$
for $p=\infty$, where $\hat{\mu}$ is the empirical measure coresponding
to the samples $s^{1:N}$.


We will make the following technical assumptions for the rest of the
paper similar to those made in \cite{munos2008finite}. 
\begin{assumption}
\label{assu:Kernel} (i) For all $\left(s,\,a\right)\in\mathbb{S}\times\mathbb{A}$,
$Q\left(\cdot\,\vert\,s,\,a\right)$ is absolutely continuous with
respect to $\mu$ and 
\[
C_{\mu}:=\sup_{\left(s,\,a\right)\in\mathbb{S}\times\mathbb{A}}\left\Vert dQ\left(\cdot\,\vert\,s,\,a\right)/d\mu\right\Vert _{\infty}<\infty.
\]
(ii) Given any sequence of policies $\left\{ \pi_{m}\right\} _{m\geq1}$,
the future state distribution $\rho\,Q^{\pi_{1}}\cdots Q^{\pi_{m}}$
is absolutely continuous with respect to $\mu$, 
\[
c_{\rho,\,\mu}\left(m\right):=\sup_{\pi_{1},\ldots,\,\pi_{m}}\left\Vert d\left(\rho\,Q^{\pi_{1}}\cdots Q^{\pi_{m}}\right)/d\mu\right\Vert _{\infty}<\infty,
\]
and $C_{\rho,\,\mu}:=\sum_{m\geq0}\gamma^{m}c_{\rho,\,\mu}\left(m\right)<\infty$.
\end{assumption} 
The above assumptions are conditions on transition probabilities, the first being a sufficient condition for the second. {$\rho$ can be regarded as an ``importance'' distribution on $\mathbb{S}$, that is possibly different from the distribution $\mu$ on $\mathbb{S}$ that is used to sample states. Assumption 1 is essentially a regularity condition on the MDP: It ensures that the MDP cannot make arbitrary transitions with high probability with respect to the initial state distribution $\mu$.} {$C_{\rho,\mu}$ is called the discounted-average concentrability
coefficient of the future-state distributions in \cite{munos2008finite}. Note that the assumption is satisfied when $\mu$ is the Lebesgue measure
on $\mathbb{S}$ and the transition kernel has a bounded density with respect to $\mu$.}

\subsection{Random Parametric Basis Function (RPBF) Approximation}
\label{sec:rpbf}

We introduce an empirical value learning algorithm with function
approximation using random parametrized basis functions (EVL+RPBF).
It requires a parametric family $\mathcal{F}$ built from a set of
parameters $\Theta$ with probability distribution $\nu$ and a feature
function $\phi\text{ : }\mathbb{S}\times\Theta\rightarrow\mathbb{R}$
(that depends on both states and parameters) with the assumption that
$\sup_{\left(s,\,\theta\right)\in\mathbb{S}\times\Theta}|\phi\left(s;\,\theta\right)|\leq1$.
This can easily be met in practice by scaling $\phi$ whenever $\mathbb{S}$
and $\Theta$ are both compact and $\phi$ is continuous in $\left(s,\,\theta\right)$.
Let $\alpha\text{ : }\Theta\rightarrow\mathbb{R}$ be a weight function
and define $\mathcal{F}\left(\Theta\right):=$ 
\[
\left\{ f\left(\cdot\right)=\int_{\Theta}\phi\left(\cdot;\,\theta\right)\alpha\left(\theta\right)d\theta\,:\,|\alpha\left(\theta\right)|\leq C\,\nu\left(\theta\right),\,\forall\theta\in\Theta\right\} .
\]
We note that the condition $|\alpha\left(\theta\right)|\leq C\,\nu\left(\theta\right)$
for all $\theta\in\Theta$ is equivalent to requiring that $\|\alpha\|_{\infty,\,\nu}:=\sup_{\theta\in\Theta}|\alpha\left(\theta\right)/\nu\left(\theta\right)|\leq C$
where $\|\alpha\|_{\infty,\,\nu}$ is the $\nu-$weighted supremum
norm of $\alpha$ and $C$ is a constant.

The function space $\mathcal{F}\left(\Theta\right)$ may be chosen
to have the `universal' function approximation property in the sense
that any Lipschitz continuous function can be approximated arbitrarily
closely in this space as shown in \cite{rahimi2008uniform}. By \cite[Theorem 2]{rahimi2008uniform},
many such choices of $\mathcal{F}\left(\Theta\right)$ are possible
and are developed in \cite[Section 5]{rahimi2008uniform}. For example,
$\mathcal{F}\left(\Theta\right)$ is universal in the following two
cases: 

\begin{itemize}
\item $\phi\left(s;\,\theta\right)=\cos\left(\langle\omega,\,s\rangle+b\right)$
where $\theta=\left(\omega,\,b\right)\in\mathbb{R}^{d+1}$; and $\nu\left(\theta\right)$
is given by $\omega\sim\text{Normal}\left(0,\,2\,\gamma\,I\right)$
and $b\sim\text{Uniform}\left[-\pi,\,\pi\right]$; 
\item $\phi\left(s;\,\theta\right)=\text{sign}\left(s_{k}-t\right)$ where
$\theta=\left(t,\,k\right)\in\mathbb{R}\times\left\{ 1,\ldots,\,d\right\} $;
and $\nu\left(\theta\right)$ to be given by $k\sim\text{Uniform}\left\{ 1,\ldots,\,d\right\} $
and $t\sim\text{Uniform}\left[-a,\,a\right]$. 
\end{itemize}

In this approach, we have a parametric function family $\mathcal{F}\left(\Theta\right)$
but instead of optimizing over parameters in $\Theta$, we randomly
sample them first and then do function fitting which involves optimizing
over finite weighted combinations $\sum_{j=1}^{J}\alpha_{j}\phi\left(\cdot;\,\theta_{j}\right)$.
Unfortunately, this leads to a non-convex optimization problem. Hence,
instead of optimizing over $\theta^{1:J}=\left(\theta_{1},\ldots,\,\theta_{J}\right)$
and $\alpha^{1:J}=\left(\alpha_{1},\ldots,\,\alpha_{J}\right)$ jointly,
we first do randomization over $\theta^{1:J}$ and then optimization
over $\alpha^{1:J}$, as in \cite{rahimi2009weighted}, to bypass
the non-convexity inherent in optimizing over $\theta^{1:J}$ and
$\alpha^{1:J}$ simultaneously.
This approach allows us to deploy rich parametric families without
much additional computational cost. Once we draw a random sample $\left\{ \theta_{j}\right\} _{j=1}^{J}$
from $\Theta$ according to $\nu$, we obtain a random function space:
$\widehat{\mathcal{F}}\left(\theta^{1:J}\right):=$
\vspace{-2.2cm} 
\[
\left\{ f\left(\cdot\right)=\sum_{j=1}^{J}\alpha_{j}\phi\left(\cdot;\,\theta_{j}\right)\,:\,\|\left(\alpha_{1},\ldots,\,\alpha_{J}\right)\|_{\infty}\leq C/J\right\}.
\vspace{-0.5cm}
\]

Step 1 of such an algorithm (Algorithm \ref{algo:EVL+RPBF}) involves
sampling states $s^{1:N}$ over which to do value iteration and sampling
parameters $\theta^{1:J}$ to pick basis functions $\phi(\cdot;\theta)$
which are used to do function fitting. Step 2 involves doing an empirical
value iteration over states $s^{1:N}$ by sampling next states $(X_{m}^{s_{n},\,a})_{m=1}^{M}$
according to the transition kernel $Q$, and using the current iterate
of the value function $v_{k}$. Note that fresh (i.i.d.) samples of
the next state are regenerated in each iteration. Step 3 involves
finding the best fit to $\tilde{v}_{k}$, the iterate from Step 2,
within $\widehat{\mathcal{F}}\left(\theta^{1:J}\right)$ wherein randomly
sampled parameters $\theta^{1:J}$ specify the basis functions for
function fitting and weights $\alpha^{1:J}$ are optimized, which
is a convex optimization problem.

\begin{algorithm}[H]
\caption{\label{algo:EVL+RPBF} EVL with random parameterized basis functions
(EVL+RPBF)}
Input: probability distribution $\mu$ on $\mathbb{S}$ and $\nu$
on $\Theta$;\\
 Sample sizes $N\geq1,$ $M\geq1$, $J\geq1$; initial seed $v_{0}$.
{counter $k=0$ and  iterations $K\geq1$.}\\
 For $k=1,\ldots,K$
\begin{enumerate}
\item Sample $(s_{n})_{n=1}^{N}\sim\mu^{N}$ and $(\theta_{j})_{j=1}^{J}\sim\nu^{J}$. 
\item Compute 
\[
\tilde{v}_{k}\left(s_{n}\right)=\min_{a\in\mathbb{A}}\left\{ c\left(s_{n},\,a\right)+\frac{\gamma}{M}\sum_{m=1}^{M}{v}_{k}\left(X_{m}^{s_{n},\,a}\right)\right\} ,
\]
where $(X_{m}^{s_{n},\,a})\sim Q\left(\cdot\,\vert\,s_{n},\,a\right)$,
$m=1,\cdots,M$ are i.i.d. 
\item $\alpha^{k}=\arg\min_{\alpha}\,\frac{1}{N}\sum_{n=1}^{N}\left(\sum_{j=1}^{J}\alpha_{j}\phi\left(s_{n};\,\theta_{j}\right)-\tilde{v}\left(s_{n}\right)\right)^{2}\\\text{s.t.}\,\|\left(\alpha_{1},\ldots,\,\alpha_{J}\right)\|_{\infty}\leq C/J$.\\
 ${v}_{k+1}(s)=\sum_{j=1}^{J}\alpha_{j}^{k}\phi(s;\theta_{j})$. 
\item Increment $k\leftarrow k+1$ and return to Step 1. 
\end{enumerate}
\end{algorithm}

We note that Step 3 of the algorithm can be replaced by another method
for function fitting (as we do in the next subsection). The above
algorithm differs from Fitted Value Iteration (FVI) algorithm of \cite{munos2008finite}
in how it does function fitting. FVI does function fitting with a
deterministic and given set of basis functions which limits its universality
while we do function fitting in a much larger space which has the universal
function approximation property but are able to reduce computational
complexity by exploiting randomization.

In \cite[Section 7]{munos2008finite}, it is shown that if the transition
kernel and cost are smooth {such that there exist $L_Q$ and $L_c$ for which}
\begin{equation}
\|Q\left(\cdot\,\vert\,s,\,a\right)-Q\left(\cdot\,\vert\,s',\,a\right)\|_{TV}\leq L_{Q}\|s-s'\|_{2}\label{eq:smooth}
\end{equation}
and 
\begin{equation}
|c\left(s,\,a\right)-c\left(s',\,a\right)|\leq L_{c}\|s-s'\|_{2}\label{eq:smooth-1}
\end{equation}
hold for all $s,\,s'\in\mathbb{S}$ and $a\in\mathbb{A}$, then the
Bellman operator $T$ maps bounded functions to Lipschitz continuous
functions. In particular, if $v$ is uniformly bounded by $v_{\max}$
then $T\,v$ is $\left(L_{c}+\gamma\,v_{\max}L_{Q}\right)-$Lipschitz
continuous. Subsequently, the inherent $\mathcal{L}_{\infty}$ Bellman
error satisfies $d_{\infty}\left(T\,\mathcal{F},\,\mathcal{F}\right)\leq d_{\infty}\left(\text{Lip}\left(L\right),\,\mathcal{F}\right)$
since $T\,\mathcal{F}\subset\text{Lip}\left(L\right)$. So, it only
remains to choose an $\mathcal{F}\left(\Theta\right)$ that is dense
in $\text{Lip}\left(L\right)$ in the supremum norm, for which many
examples exist.


We now provide non-asymptotic sample complexity bounds to establish
that Algorithm \ref{algo:EVL+RPBF} yields an approximately optimal
value function with high probability. We provide guarantees for both
the $\mathcal{L}_{1}$ and $\mathcal{L}_{2}$ metrics on the error.

Denote 
\begin{eqnarray*}
	N_{2}(\varepsilon,\delta') & = & 2^{7}5^{2}\bar{v}_{\max}^{4}\log\left[\frac{40\,e\left(J_2+1\right)}{\delta}\left(10\,e\,\bar{v}_{\max}^{2}\right)^{J}\right],\\
	M_{2}(\varepsilon,\delta') & = & \left(\frac{\bar{v}_{\max}^{2}}{2}\right)\log\left[\frac{10\,N_2\,|\mathbb{A}|}{\delta'}\right],\\
	J_{2}(\varepsilon,\delta') & = & \left(\cfrac{5C}{\varepsilon}\left(1+\sqrt{2\log\cfrac{5}{\delta'}}\right)\right)^{2},~~~\text{and}\\
	K_{2}^{*} & = & 2\left\lceil \frac{\ln\left(C_{\rho,\,\mu}^{1/2}\varepsilon\right)-\ln\left(2\,v_{\max}\right)}{\ln\,\gamma}\right\rceil ,
\end{eqnarray*}
where  $\bar{v}_{max}=v_{\max}/\varepsilon$. Set $\delta':=1-\left(1-\delta/2\right)^{1/\left(K_{2}^{*}-1\right)}$.
Then, we have the following sample complexity bound on Algorithm \ref{algo:EVL+RPBF} with $\mathcal{L}_{2}$ error. We note that $\mathcal{L}_{2,\,\mu}\left(\mathbb{S}\right)$ is a Hilbert space and that many powerful function approximation results exist for this setting because of the favorable properties of a Hilbert space.

\begin{theorem} \label{thm:RPBF_L2} Given an $\varepsilon>0$, and
	a $\delta\in\left(0,\,1\right)$, choose $J\geq J_{2}(\varepsilon,\delta'),~~N\geq N_{2}(\varepsilon,\delta'),~~M\geq M_{2}(\varepsilon,\delta'),$
	Then, for $K\geq\log\left(4/\left(\delta\,\mu^{*}\left(\delta;K_{2}^{*}\right)\right)\right)$,
	we have 
	\[
	\|{v}_{K}-v^{*}\|_{2,\,\rho}\leq2\tilde{\gamma}^{1/2}C_{\rho,\,\mu}^{1/2}\left(d_{2,\,\mu}\left(T\,\mathcal{F}\left(\Theta\right),\,\mathcal{F}\left(\Theta\right)\right)+2\,\varepsilon\right)
	\]
	with probability at least $1-\delta$. 
\end{theorem}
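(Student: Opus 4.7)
The plan is to combine an approximate-value-iteration (AVI) error-propagation bound in the style of \cite{munos2008finite} with a careful one-step error decomposition that separately controls the four distinct approximation sources in Algorithm~\ref{algo:EVL+RPBF}: (i) the inherent Bellman error $d_{2,\,\mu}(T\,\mathcal{F}(\Theta),\mathcal{F}(\Theta))$ of the target function class, (ii) the randomization error from drawing $J$ basis parameters $\theta^{1:J}\sim\nu^J$, (iii) the Monte Carlo error from using $M$ next-state samples to approximate the expectation inside the Bellman operator, and (iv) the generalization error from minimizing the empirical $\mathcal{L}_{2,\hat\mu}$ loss over $N$ states in Step~3 instead of the population $\mathcal{L}_{2,\mu}$ loss.

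First, I would invoke the standard AVI propagation bound: defining the per-iteration residual $\varepsilon_k := v_{k+1}-T v_k$ and assuming $\|\varepsilon_k\|_{2,\,\mu}\leq\varepsilon_{\max}$ for each $k<K$, one obtains
\[
\|v_K-v^*\|_{2,\,\rho}\;\leq\;2\tilde\gamma^{1/2}C_{\rho,\,\mu}^{1/2}\,\varepsilon_{\max}+\gamma^{K}\cdot(\text{transient term}),
\]
where $\tilde\gamma$ absorbs $\gamma/(1-\gamma)^2$-type constants. The hypothesis $K\geq K_2^*$ drives the transient term below $\varepsilon$ in the appropriate norm. The problem then reduces to showing that the one-step error $\|v_{k+1}-Tv_k\|_{2,\,\mu}$ can be driven below $d_{2,\,\mu}(T\,\mathcal{F}(\Theta),\mathcal{F}(\Theta))+2\varepsilon$ with probability at least $1-\delta'$ per iteration, where $\delta'=1-(1-\delta/2)^{1/(K_2^*-1)}$ is the per-step failure budget appearing in the statement.

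Second, I would bound the one-step error by inserting the population best approximation $f_k^*\in\arg\min_{f\in\mathcal{F}(\Theta)}\|f-Tv_k\|_{2,\,\mu}$ and its finite-basis surrogate $\hat f_k^*\in\widehat{\mathcal{F}}(\theta^{1:J})$ and applying the triangle inequality. This splits the error into (a) the inherent Bellman error, (b) the randomization gap $\|\hat f_k^*-f_k^*\|_{2,\,\mu}$, which by the Rahimi--Recht bound \cite{rahimi2009weighted} is $O(C/\sqrt{J})$ with probability $1-\delta'/5$ and dictates $J\geq J_2$, (c) the empirical-Bellman error $\|\tilde v_k - Tv_k\|_{2,\,\hat\mu}$, controlled uniformly over $(s,a)\in s^{1:N}\times\mathbb{A}$ by Hoeffding's inequality combined with a union bound over $N|\mathbb{A}|$ events, yielding $M\geq M_2$, and (d) the empirical-to-population-norm gap on $\widehat{\mathcal{F}}(\theta^{1:J})$. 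Step~(d) is handled by a Pollard-type covering argument: since $\widehat{\mathcal{F}}(\theta^{1:J})$ is a uniformly bounded convex class parameterized by $J$ weights with $\|\alpha\|_\infty\leq C/J$, its $\varepsilon$-covering number scales as $(10e\bar v_{\max}^2)^J$, which directly produces the logarithmic factor inside $N_2(\varepsilon,\delta')$.

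Finally, a union bound over $K$ iterations yields total failure probability at most $\delta/2$, while the stochastic-dominance framework of Section~\ref{sec:analysis} absorbs the remaining $\delta/2$ through the quantity $\mu^*(\delta;K_2^*)$, whose positivity forces $K\geq\log(4/(\delta\,\mu^*(\delta;K_2^*)))$ iterations to reach the $\varepsilon$-neighborhood of the weak probabilistic fixed point of $\{\hat T_n\}$. The main obstacle I anticipate is step~(d): the approximating class $\widehat{\mathcal{F}}(\theta^{1:J})$ is itself random through $\theta^{1:J}$, so one cannot apply a fixed covering bound directly. One must condition on $\theta^{1:J}$, control the class uniformly on a favorable event, and then marginalize, tracking constants precisely enough to reproduce the explicit expression for $N_2(\varepsilon,\delta')$. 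A subsidiary difficulty is showing that $\|v_k\|_\infty\leq v_{\max}$ is preserved across iterations, which is needed to reapply Hoeffding's and Pollard's inequalities at step $k+1$; this follows from the constraint $\|\alpha\|_\infty\leq C/J$ in Step~3 together with $|\phi|\leq 1$, provided $C$ is chosen so that $C/J\cdot J=C\geq v_{\max}$.
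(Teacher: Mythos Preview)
Your approach is essentially the paper's own: the error-propagation bound is Lemma~\ref{lem:error_prop} specialized to $p=2$, your four-way one-step decomposition is exactly the content of Theorem~\ref{thm:Bellman_L2} (whose proof mirrors the chain of inequalities~(\ref{eq:L1_Bellman})--(\ref{eq:L1_Bellman-6}) in Theorem~\ref{thm:Bellman_L1}), and the final assembly goes through Corollary~\ref{cor:Random_Lp}. Your anticipated obstacles are handled there: the covering bound on $\widehat{\mathcal{F}}(\theta^{1:J})$ is applied after conditioning on $\theta^{1:J}$ and uses only that the class is a $J$-dimensional linear space uniformly bounded by $v_{\max}$ (Lemma~\ref{lem:Bellman_empirical}), so the randomness of $\theta^{1:J}$ is irrelevant to the covering number itself.

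The one point where your sketch is off is the last paragraph. You write that ``a union bound over $K$ iterations yields total failure probability at most $\delta/2$, while the stochastic-dominance framework \ldots absorbs the remaining $\delta/2$.'' This is not how the paper combines the per-step guarantees, and a union bound over all $K$ iterations would in fact fail: $K$ is driven by the mixing time $\log(4/(\delta\,\mu^*(\delta;K_2^*)))$ and can be arbitrarily large, so $K\delta'$ does not stay bounded. The stochastic-dominance construction of Theorem~\ref{thm:Random} and Corollary~\ref{cor:Random_Lp} is precisely what \emph{replaces} a naive union bound: one only needs the \emph{last} $K^*$ iterations to be good, and the dominating Markov chain $(Y_k)$ shows this event has stationary probability $q^{K^*-1}$, which the choice $\delta'=1-(1-\delta/2)^{1/(K^*-1)}$ makes at least $1-\delta/2$; the remaining $\delta/2$ is the mixing-time slack. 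So there is a single mechanism, not two, and the ``union bound over $K$'' step should be dropped.
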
 
\textit{Remarks.}
1. That is, if we choose enough samples $N_2$ of the states, enough samples
$M_2$ of the next state, and enough random samples $J_2$ of the parameter
$\theta$, and then for large enough number of iterations $K_2$, the
$\mathcal{L}_{2}$ error in the value function is determined by the
inherent Bellman error of the function class $\mathcal{F}\left(\Theta\right)$.
2. For the function families $\mathcal{F}(\Theta)$ discussed earlier (RPBF),
the inherent Bellman error, $d_{2,\,\mu}\left(T\,\mathcal{F}\left(\Theta\right),\,\mathcal{F}\left(\Theta\right)\right)=0$ indeed, and so the value function
will have small $\mathcal{L}_{2}$ error with high probability. {3. Note that the sample complexity bounds are independent of the state space dimension though the computational complexity of sampling from the state space does indeed depend on that dimension.}

\noindent Next we give a similar guarantee for $\mathcal{L}_{1}$
error for Algorithm \ref{algo:EVL+RPBF} by considering approximation
in $\mathcal{L}_{1,\,\mu}\left(\mathbb{S}\right).$ 

Denote 
\begin{eqnarray*}
	N_{1}(\varepsilon,\delta') & = & 2^{7}5^{2}\bar{v}_{\max}^{2}\log\left[\frac{40\,e\left(J_1+1\right)}{\delta}\left(10\,e\,\bar{v}_{\max}\right)^{J}\right],~~\\
	M_{1}(\varepsilon,\delta') & = & \left(\frac{\bar{v}_{\max}^{2}}{2}\right)\log\left[\frac{10\,N_1\,|\mathbb{A}|}{\delta'}\right],\\
	J_{1}(\varepsilon,\delta') & = & \left(\cfrac{5C}{\varepsilon}\left(1+\sqrt{2\log\cfrac{5}{\delta'}}\right)\right)^{2},\\
	K_{1}^{*} & = & \left\lceil \frac{\ln\left(C_{\rho,\,\mu}\varepsilon\right)-\ln\left(2\,v_{\max}\right)}{\ln\,\gamma}\right\rceil ,~~\text{and}\\
	\mu^{*}\left(p;\,K^{*}\right) & = & \left(1-p\right)p^{\left(K^{*}-1\right)},
\end{eqnarray*}
where $C$ is the same constant that appears in the definition of
$\mathcal{F}\left(\Theta\right)$ (see \cite{rahimi2009weighted})
and $\bar{v}_{max}=v_{\max}/\varepsilon$. Set $\delta':=1-\left(1-\delta/2\right)^{1/\left(K_{1}^{*}-1\right)}$.

\begin{theorem} \label{thm:RPBF_L1} Given an $\varepsilon>0$, and
	a $\delta\in\left(0,\,1\right)$, choose $J\geq J_{1}(\varepsilon,\delta'),~~N\geq N_{1}(\varepsilon,\delta'),~~M\geq M_{1}(\varepsilon,\delta'),$
	Then, for $K\geq\log\left(4/\left(\delta\,\mu^{*}\left(\delta;K_{1}^{*}\right)\right)\right)$,
	we have 
	\[
	\|{v}_{K}-v^{*}\|_{1,\,\rho}\leq2\,C_{\rho,\,\mu}\left(d_{1,\,\mu}\left(T\,\mathcal{F}\left(\Theta\right),\,\mathcal{F}\left(\Theta\right)\right)+2\,\varepsilon\right)
	\]
	with probability at least $1-\delta$. 
\end{theorem}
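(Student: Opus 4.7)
The plan is to mirror the structure of the proof of Theorem \ref{thm:RPBF_L2} but work throughout in the $\mathcal{L}_{1,\mu}$ metric, which actually simplifies the error-propagation constants. Concretely, I would first invoke an $\mathcal{L}_{p}$ error-propagation lemma of Munos type: letting $\varepsilon_k := \|v_{k+1} - T v_k\|_{1,\mu}$ denote the one-step approximation error, a telescoping argument exploiting Assumption \ref{assu:Kernel} yields
\[
\|v_K - v^{*}\|_{1,\rho} \;\leq\; C_{\rho,\mu}\,\max_{0\leq k < K}\varepsilon_k \;+\; 2\gamma^{K} v_{\max}.
\]
The tail term $\gamma^K v_{\max}$ is forced below $\varepsilon$ by choosing $K \geq K_1^{*}$ (this is exactly the role of $K_1^{*}$). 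Thus the task reduces to controlling $\varepsilon_k$ uniformly in $k$ by $d_{1,\mu}(T\mathcal{F}(\Theta),\mathcal{F}(\Theta)) + 2\varepsilon$ with high probability.

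Next I would decompose the one-step error at iteration $k$ into three pieces:
\[
\varepsilon_k \;\leq\; \underbrace{\bigl\|v_{k+1}-\tilde v_k\bigr\|_{1,\mu}}_{\text{(a) fit error}} \;+\; \underbrace{\bigl\|\tilde v_k - \hat T_M v_k\bigr\|_{1,\mu}}_{\text{(b) sampling of states}} \;+\; \underbrace{\bigl\|\hat T_M v_k - T v_k\bigr\|_{1,\mu}}_{\text{(c) empirical Bellman error}}.
\]
Term (c) is handled by Hoeffding's inequality applied to the $M_1$ next-state samples, with a union bound over the $N_1$ sampled states and $|\mathbb{A}|$ actions, giving the stated form of $M_1$. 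Term (a) is bounded by first passing to the empirical norm $\|\cdot\|_{1,\hat\mu}$ via a Pollard-style uniform concentration inequality over the random function class $\widehat{\mathcal{F}}(\theta^{1:J})$ (this covering-number argument produces the $\log[(J_1+1)(10e\bar v_{\max})^J]$ factor in $N_1$), then comparing $v_{k+1}$ to the $\mathcal{L}_1$ best-fit in the deterministic parent class $\mathcal{F}(\Theta)$ and using the Rahimi-Recht random-features bound, whose deviation term $(C/\sqrt{J})(1+\sqrt{2\log(1/\delta')})$ dictates $J_1$. The best-fit error in $\mathcal{F}(\Theta)$ is exactly the inherent Bellman error $d_{1,\mu}(T\mathcal{F}(\Theta),\mathcal{F}(\Theta))$. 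Term (b) is absorbed into the state-sampling concentration used for (a).

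Finally, to make the per-iteration event hold simultaneously for sufficiently many consecutive iterations, I would invoke the iterated random-operator / stochastic-dominance framework of \cite{haskell2016empirical} already used in the proof of Theorem \ref{thm:RPBF_L2}. Choosing $\delta' = 1-(1-\delta/2)^{1/(K_1^{*}-1)}$ ensures that, across any block of $K_1^{*}-1$ iterations, all the per-iteration high-probability events hold with probability $\geq 1-\delta/2$; such a good block is a ``success streak'' of the dominating Markov chain. The chain's geometric-type probability $\mu^{*}(\delta;K_1^{*}) = (1-\delta)\delta^{K_1^{*}-1}$ is the per-block success probability, and requiring $K \geq \log(4/(\delta\,\mu^{*}(\delta;K_1^{*})))$ guarantees at least one such streak occurs with probability $\geq 1 - \delta/2$ within $K$ iterations. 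Combining with the propagation inequality produces the stated bound.

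The main obstacle, as in the $\mathcal{L}_2$ analogue, is the probabilistic bookkeeping in the last step: a naive union bound over $K$ would give a bound that deteriorates in $K$, but $K$ itself must grow with $1/\delta$, so the dependence would be vacuous. The stochastic-dominance construction sidesteps this by only requiring \emph{some} block of length $K_1^{*}$ to be good, not all of them. Adapting it from $\mathcal{L}_\infty$ (as in \cite{haskell2016empirical}) to $\mathcal{L}_{1,\mu}$ is clean because the contraction constant of $T$ in $\mathcal{L}_{1,\mu}$ under Assumption \ref{assu:Kernel} is still $\gamma$ after paying one factor of $C_{\rho,\mu}$, which is why the final bound carries $C_{\rho,\mu}$ to the first power rather than the square root appearing in Theorem \ref{thm:RPBF_L2}.
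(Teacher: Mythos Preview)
Your proposal is correct and follows essentially the same route as the paper: the error-propagation step is the paper's Lemma~\ref{lem:error_prop} specialized to $p=1$, your three-term one-step decomposition is exactly what is packaged inside the paper's Theorem~\ref{thm:Bellman_L1} (the chain of inequalities using Hoeffding, the Pollard/covering-number concentration, and the Rahimi--Recht random-features bound), and the ``good streak'' bookkeeping is the paper's Corollary~\ref{cor:Random_Lp}. The only cosmetic difference is that the paper states the one-step bound as a black-box lemma and then plugs it into the dominance corollary, whereas you unpack both in-line; your explanation of why $C_{\rho,\mu}$ appears to the first power (versus the square root in the $\mathcal{L}_2$ case) is also consistent with the $p=1$ specialization of Lemma~\ref{lem:error_prop}.
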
 

\textit{Remarks.} 1. Again, note that the above result implies that the RBPF function family $\mathcal{F}(\Theta)$ has inherent
Bellman error $d_{1,\,\mu}\left(T\,\mathcal{F}\left(\Theta\right),\,\mathcal{F}\left(\Theta\right)\right)=0$, so that for 
enough samples $N_1$ of the states, enough samples
$M_1$ of the next state, and enough random samples $J_1$ of the parameter
$\theta$, and then for large enough number of iterations $K_1$, the
value function will have small $\mathcal{L}_{1}$ error with high
probability.  {2. As above, note that there is no dependence on state space dimension in the sample complexity bounds though computational complexity of sampling states from the state space indeed depends on it.}

\subsection{Non-parametric Function Approximation in RKHS}
\label{sec:rkhs}

We now consider non-parametric function approximation combined with EVL. We employ a Reproducing Kernel Hilbert Space (RKHS) for function
approximation since for suitably chosen kernels, it is dense in the space of continuous functions and hence has a `universal' function approximation property. In the RKHS setting, we can obtain guarantees directly with respect to the supremum norm.

We will consider a regularized RKHS setting with a continuous, symmetric
and positive semidefinite kernel $K:\mathbb{S}\times\mathbb{S}\rightarrow\mathbb{R}$
and a regularization constant $\lambda>0$. The RKHS space, $\mathcal{H}_{K}$
is defined to be the closure of the linear span of $\{K(s,\cdot)\}_{s\in\mathbb{S}}$
endowed with an inner product $\langle\cdot,\cdot\rangle_{\mathcal{H}_{K}}$.
The inner product $\langle\cdot,\,\cdot\rangle_{\mathcal{H}_{K}}$
for $\mathcal{H}_{K}$ is defined such that $\langle K\left(x,\,\cdot\right),\,K\left(y,\,\cdot\right)\rangle_{\mathcal{H}_{K}}=K\left(x,\,y\right)$
for all $x,\,y\in\mathbb{S}$, i.e., $\langle\sum_{i}\alpha_{i}K\left(x_{i},\,\cdot\right),\,\sum_{j}\beta_{j}K\left(y_{j},\,\cdot\right)\rangle_{\mathcal{H}_{K}}=\sum_{i,\,j}\alpha_{i}\beta_{j}K\left(x_{i},\,y_{j}\right)$.
Subsequently, the inner product satisfies the reproducing property:
$\langle K\left(s,\,\cdot\right),\,f\rangle_{\mathcal{H}_{K}}=f\left(s\right)$
for all $s\in\mathbb{S}$ and $f\in\mathcal{H}_{K}$. The corresponding
RKHS norm is defined in terms of the inner product $\|f\|_{\mathcal{H}_{K}}:=\sqrt{\langle f,\,f\rangle_{\mathcal{H}_{K}}}$.
We assume that our kernel $K$ is bounded so that $\kappa:=\sup_{s\in\mathbb{S}}\sqrt{K\left(s,\,s\right)}<\infty$.

To find the best fit $f\in\mathcal{H}_{K}$ to a function with data
$\left\{ \left(s_{n},\,\tilde{v}\left(s_{n}\right)\right)\right\} _{n=1}^{N}$,
we solve the regularized least squares problem: 
\begin{equation}
\min_{f\in\mathcal{H}_{K}}\left\{ \frac{1}{N}\sum_{n=1}^{N}\left(f\left(s_{n}\right)-\tilde{v}\left(s_{n}\right)\right)^{2}+\lambda\,\|f\|_{\mathcal{H}_{K}}^{2}\right\} .\label{opt:RKHS}
\end{equation}
This is a convex optimization problem (the norm squared is convex),
and has a closed form solution by the Representer Theorem. In particular,
the optimal solution is of the form $\hat{f}\left(s\right)=\sum_{n=1}^{N}\alpha_{n}K\left(s_{n},\,s\right)$
where the weights $\alpha^{1:N}=\left(\alpha_{1},\ldots,\,\alpha_{N}\right)$
are the solution to the linear system 
\begin{equation}
\left(\left[K\left(s_{i},\,s_{j}\right)\right]_{i,\,j=1}^{N}+\lambda\,N\,I\right)\left(\alpha_{n}\right)_{n=1}^{N}=\left(\tilde{v}\left(s_{n}\right)\right)_{n=1}^{N}.\label{eq:RKHS_linear}
\end{equation}
This yields EVL algorithm with randomized function fitting in a regularized RKHS (EVL+RKHS) displayed as Algorithm \ref{algo:EVL+RKHS}.

Note that the optimization problem in Step 3 in Algorithm \ref{algo:EVL+RKHS}
is analogous to the optimization problem in Step 3 of Algorithm \ref{algo:EVL+RPBF}
which finds an approximate best fit within the finite-dimensional
space $\widehat{\mathcal{F}}\left(\theta^{1:J}\right)$, rather than
the entire space $\mathcal{F}\left(\Theta\right)$, while Problem
(\ref{opt:RKHS}) in Algorithm \ref{algo:EVL+RKHS} optimizes over
the entire space $\mathcal{H}_{K}$. This difference can be reconciled
by the Representer Theorem, since it states that optimization over
$\mathcal{H}_{K}$ in Problem (\ref{opt:RKHS}) is equivalent to optimization
over the finite-dimensional space spanned by $\left\{ K\left(s_{n},\,\cdot\right)\text{ : }n=1,\ldots,\,N\right\} $.
Note that the regularization $\lambda\,\|f\|_{\mathcal{H}_{K}}^{2}$
is a requirement of the Representer Theorem.

\begin{algorithm}[H]
\caption{\label{algo:EVL+RKHS} EVL with regularized RKHS (EVL+RKHS)}
Input: probability distribution $\mu$ on $\mathbb{S}$;\\
 sample sizes $N\geq1,$ $M\geq1$; penalty $\lambda$;\\
 initial seed $v_{0}$; counter $k=0$.

For $k=1,\ldots,K$
\begin{enumerate}
\item Sample $\left\{ s_{n}\right\} _{n=1}^{N}\sim\mu$. 
\item Compute 
\[
\tilde{v}_{k}\left(s_{n}\right)=\min_{a\in\mathbb{A}}\left\{ c\left(s_{n},\,a\right)+\frac{\gamma}{M}\sum_{m=1}^{M}{v}_{k}\left(X_{m}^{s_{n},\,a}\right)\right\} ,
\]
where $\left\{ X_{m}^{s_{n},\,a}\right\} _{m=1}^{M}\sim Q\left(\cdot\,\vert\,s_{n},\,a\right)$
are i.i.d. 
\item ${v}_{k+1}(\cdot)$ is given by 
\[
\arg\min\limits _{f\in\mathcal{H}_{K}}\left\{ \cfrac{1}{N}\sum_{n=1}^{N}(f(s_{n})-\tilde{v}(s_{n}))^{2}+\lambda\lvert\lvert f\rvert\vert_{\mathcal{H}_{K}}\right\} .
\]
\item Increment $k\leftarrow k+1$ and return to Step 1. 
\end{enumerate}
\end{algorithm}

We define the \textit{regression function} $f_{M}\text{ : }\mathbb{S}\rightarrow\mathbb{R}$
via 
\[
f_{M}\left(s\right)\triangleq\mathbb{E}\left[\min_{a\in\mathbb{A}}\left\{ c\left(s,\,a\right)+\frac{\gamma}{M}\sum_{m=1}^{M}v\left(X_{m}^{s,\,a}\right)\right\} \right],\,\forall s\in\mathbb{S},
\]
it is the expected value of our empirical estimator of $T\,v$. As
expected, $f_{M}\rightarrow T\,v$ as $M\rightarrow\infty$. We note
that $f_{M}$ is not necessarily equal to $T\,v$ by Jensen's inequality.
We require the following assumption on $f_{M}$ to continue.

\begin{assumption} \label{assu:RKHS} For every $M\geq1$, $f_{M}\left(s\right)=\int_{\mathbb{S}}K\left(s,\,y\right)\alpha\left(y\right)\mu\left(dy\right)$
for some $\alpha\in\mathcal{L}_{2,\,\mu}\left(\mathbb{S}\right)$.
\end{assumption}

Regression functions play a key role in statistical learning theory,
Assumption \ref{assu:RKHS} states that the regression function lies
in the span of the kernel $K$. {It is satisfied whenever $K$ is a universal kernel. Some
examples of universal kernels follow.} Additionally, when $\mathcal{H}_{K}$
is dense in the space of Lipschitz functions, then the inherent Bellman error
is zero. For example, $K\left(s,\,s'\right)=\exp\left(-\gamma\,\|s-s'\|_{2}\right)$,
$K\left(s,\,s'\right)=1-\frac{1}{a}\|s-s'\|_{1}$, and $K\left(s,\,s'\right)=\exp\left(\gamma\,\|s-s'\|_{1}\right)$
are all universal kernels.

Denote 
\begin{eqnarray*}
N_{\infty}(\varepsilon,\delta') & = & \left(\frac{4\,C_{K}\kappa}{\varepsilon\left(1-\gamma\right)}\right)^{6}\log\left(\frac{4}{\delta'}\right)^{2}\\
M_{\infty}(\varepsilon) & = & \frac{160\,v_{\max}^{2}}{\left(\varepsilon\left(1-\gamma\right)\right)^{2}}\log\left(\frac{2\,|\mathbb{A}|\,\gamma\left(8\,v_{\max}-\varepsilon\left(1-\gamma\right)\right)}{\varepsilon\left(1-\gamma\right)\left(2-\gamma\right)}\right)\\
K_{\infty}^{*} & = & \left\lceil \frac{\ln\left(\varepsilon\right)-\ln\left(4\,v_{\max}\right)}{\ln\,\gamma}\right\rceil 
\end{eqnarray*}
where $C_{K}$ is a constant independent of the dimension of $\mathbb{S}$
(see \cite{smale2005shannon} for the details on how $C_{K}$ depends
on the kernel $K$) and set $\delta'=1-\left(1-\delta/2\right)^{1/\left(K_{\infty}^{*}-1\right)}$.
\begin{theorem} \label{thm:RKHS_Linfty} Suppose Assumption \ref{assu:RKHS}
holds. Given any $\varepsilon>0$ and $\delta\in\left(0,\,1\right)$,
choose an $N\geq N_{\infty}(\varepsilon,\delta')$ and an $M\geq M_{\infty}(\varepsilon)$.
Then, for any $K\geq\log\left(4/\left(\delta\,\mu^{*}\left(\delta;K_{\infty}^{*}\right)\right)\right)$,
\[
\|{v}_{K}-v^{*}\|_{\infty}\leq\varepsilon
\]
with probability at least $1-\delta$. \end{theorem}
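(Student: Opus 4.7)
The plan is to bound the one-step deviation $\|v_{k+1} - T v_k\|_\infty$ with high probability, then feed this into the stochastic-dominance random-operator framework of \cite{haskell2016empirical} to turn these per-iteration bounds into a non-asymptotic guarantee on $\|v_K - v^*\|_\infty$. Concretely, I would decompose the one-step error as
\[
v_{k+1} - T v_k \;=\; (v_{k+1} - f_M^{(k)}) \;+\; (f_M^{(k)} - T v_k),
\]
where $f_M^{(k)}$ denotes the regression function associated with Step 2 of Algorithm \ref{algo:EVL+RKHS} applied to the current iterate $v_k$. The first term is the RKHS function-fitting error (noisy samples of the regression function at $N$ sampled states), while the second is the bias introduced by replacing $T v_k$ by its $M$-sample Monte Carlo estimator.

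For the fitting error, I would invoke the Smale--Zhou-type bound on regularized least-squares in an RKHS under Assumption \ref{assu:RKHS} (so that the target lies in the image of the integral operator and the bias term admits a power-law rate). This gives, with probability at least $1-\delta'/2$, a bound of the form $\|v_{k+1} - f_M^{(k)}\|_\infty \le C_K \kappa N^{-1/6} \sqrt{\log(4/\delta')}$, which is exactly why $N_\infty$ is defined as $(4 C_K \kappa/(\varepsilon(1-\gamma)))^6 \log(4/\delta')^2$: it ensures this term is at most $\varepsilon(1-\gamma)/4$. For the empirical Bellman bias, Jensen and a union bound over $|\mathbb{A}|$ actions on the Hoeffding inequality applied to the $M$ next-state samples produce $\|f_M^{(k)} - T v_k\|_\infty \le \varepsilon(1-\gamma)/4$ once $M \ge M_\infty(\varepsilon)$, with probability at least $1 - \delta'/2$. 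Combining the two gives $\|v_{k+1} - T v_k\|_\infty \le \varepsilon(1-\gamma)/2$ per iteration with probability at least $1-\delta'$.

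To convert these per-iteration events into an end-to-end guarantee, I would use the random-operator-with-stochastic-dominance argument from \cite{haskell2016empirical}: define the ``good'' event at iteration $k$ as the one-step error being at most $\varepsilon(1-\gamma)/2$, and track the integer-valued process counting the number of consecutive good iterations. This process stochastically dominates a geometric-type Markov chain whose return time to state $K^*_\infty$ is controlled by $\mu^*(\delta;K^*_\infty) = (1-\delta)\delta^{K^*_\infty-1}$. Choosing $K^*_\infty = \lceil \ln(\varepsilon/(4v_{\max}))/\ln\gamma \rceil$ guarantees that whenever there have been $K^*_\infty$ consecutive good iterations, the $\gamma$-contraction of $T$ in $\|\cdot\|_\infty$ collapses the initial error $\|v_0-v^*\|_\infty \le 2 v_{\max}$ down to $\varepsilon/2$, while the accumulated one-step errors sum to at most $\varepsilon(1-\gamma)/2 \cdot \sum_{j=0}^\infty \gamma^j = \varepsilon/2$, yielding the desired $\|v_K - v^*\|_\infty \le \varepsilon$. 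A standard Borel--Cantelli-style estimate on the dominating chain then shows $K \ge \log(4/(\delta \mu^*(\delta;K^*_\infty)))$ iterations suffice, with the calibration $\delta' = 1 - (1-\delta/2)^{1/(K^*_\infty-1)}$ ensuring the union bound over the good events contributes at most $\delta/2$ to the failure probability.

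The principal obstacle is in controlling the RKHS fitting error uniformly over iterations in sup-norm, rather than in $\mathcal{L}_{2,\mu}$. Sup-norm rates in RKHS regression are genuinely weaker than $\mathcal{L}_2$ rates, and they require the $N^{1/6}$ rate (explaining the sixth power in $N_\infty$) together with Assumption \ref{assu:RKHS} to pass from the empirical design to uniform bounds through the boundedness of point evaluation in $\mathcal{H}_K$ (the $\kappa$ factor). A secondary subtlety is that the fitting is performed on the random function $\tilde v_k$ rather than on a deterministic target, so I must condition on $v_k$ and next-state samples, absorb the $v_k$-dependence into a uniform-over-$\mathcal{F}(\mathbb{S};v_{\max})$ bound, and verify that the regression-function assumption is preserved along the iteration so that the same RKHS bound can be applied at every $k$.
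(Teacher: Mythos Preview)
Your proposal follows essentially the same route as the paper: decompose the one-step error as $(v_{k+1}-f_M)+(f_M-Tv_k)$, bound the RKHS fitting piece via the Smale--Zhou sup-norm rate (hence the $N^{1/6}$ and the $\kappa$ factor), bound the bias $\|f_M-Tv_k\|_\infty$ via Hoeffding, and then feed the resulting per-iteration bound into the stochastic-dominance/dominating-Markov-chain machinery (the paper packages these steps as Theorem~\ref{thm:Supremum_Bellman}, Lemma~\ref{lem:Bellman_supremum}, and Corollary~\ref{cor:Random_supremum}).

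One small correction: the bias term $\|f_M-Tv_k\|_\infty$ is \emph{deterministic}, not a high-probability statement. Since $f_M(s)=\mathbb{E}[\tilde v(s)]$, the quantity $|f_M(s)-[Tv](s)|$ is a fixed number for each $s$; the paper bounds it (Lemma~\ref{lem:Bellman_supremum}) by integrating the Hoeffding tail, yielding $\gamma[\varepsilon'+2|\mathbb{A}|e^{-2M(\varepsilon')^2/v_{\max}^2}(v_{\max}-\varepsilon')]$ with no failure probability attached. This is why $M_\infty(\varepsilon)$ does not depend on $\delta'$. Your phrasing ``with probability at least $1-\delta'/2$'' for this piece is therefore unnecessary (and a direct high-probability Hoeffding-plus-union argument over the uncountable state space would not work). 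Apart from this, your outline matches the paper's argument.
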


Note that we provide guarantees on ${\mathcal L}_1$ and ${\mathcal L}_2$ error (can be generalized to ${\mathcal L}_p$) with the RPBF method and for ${\mathcal L}_{\infty}$ error with the RKHS-based randomized function fitting method. Getting guarantees for the ${\mathcal L}_{p}$ error with the RKHS method has proved quite difficult, as has bounds on the ${\mathcal L}_{\infty}$ error with the RBPF method.


\section{Analysis in a Random operator framework}

\label{sec:analysis}

We will analyze Algorithms \ref{algo:EVL+RPBF} and \ref{algo:EVL+RKHS}
in terms of random operators since this framework is general enough
to encompass many such algorithms. The reader can see that Step 2
of both algorithms involves iteration of the empirical Bellman operator
while Step 3 involves a randomized function fitting step which is
done differently and in different spaces in both algorithms. We use
random operator notation to write these algorithms in a compact way,
and then derive a clean and to a large-extent unified convergence
analysis. The key idea is to use the notion of stochastic dominance
to bound the error process with an easy to analyze ``dominating''
Markov chain. Then, we can infer the solution quality of our algorithms
via the probability distribution of the dominating Markov chain. This analysis idea refines (and in fact, simplifies) 
the idea we introduced in  \cite{haskell2016empirical}
for MDPs with finite state and action spaces (where there is no function
fitting) in the supremum norm. In this paper, we develop the technique
further, give a stronger convergence rate, account for randomized
function approximation, and also generalize the technique to $\mathcal{L}_{p}$
norms.

We introduce a probability space $\left(\Omega,\mathcal{B}\left(\Omega\right),P\right)$
on which to define random operators, where $\Omega$ is a sample space
with elements denoted $\omega\in\Omega$, $\mathcal{B}\left(\Omega\right)$
is the Borel $\sigma-$algebra on $\Omega$, and $P$ is a probability
distribution on $\left(\Omega,\,\mathcal{B}\left(\Omega\right)\right)$.
A random operator is an operator-valued random variable on $\left(\Omega,\,\mathcal{B}\left(\Omega\right),\,P\right)$.
We define the first random operator on $\mathcal{F}\left(\mathbb{S}\right)$
as $\widehat{T}(v)=\left(s_{n},\,\tilde{v}\left(s_{n}\right)\right)_{n=1}^{N}$
where $(s_{n})_{n=1}^{N}$ is chosen from $\mathbb{S}$ according
to a distribution $\mu\in\mathcal{M}\left(\mathbb{S}\right)$ and
\[
\tilde{v}\left(s_{n}\right)=\min_{a\in\mathbb{A}}\left\{ c\left(s_{n},\,a\right)+\frac{\gamma}{M}\sum_{m=1}^{M}v\left(X_{m}^{s_{n},\,a}\right)\right\} ,
\]
$n=1,\ldots,N$ is an approximation of $\left[T\,v\right]\left(s_{n}\right)$
for all $n=1,\ldots,N$. In other words, $\widehat{T}$ maps from
$v\in\mathcal{F}\left(\mathbb{S};\,v_{\max}\right)$ to a randomly
generated sample of $N$ input-output pairs $\left(s_{n},\,\tilde{v}\left(s_{n}\right)\right)_{n=1}^{N}$
of the function $T\,v$. Note that $\widehat{T}$ depends on sample
sizes $N$ and $M$. Next, we have the function reconstruction operator
$\widehat{\Pi}_{\mathcal{F}}$ which maps the data $\left(s_{n},\,\tilde{v}\left(s_{n}\right)\right)_{n=1}^{N}$
to an element in $\mathcal{F}$. Note that $\widehat{\Pi}_{\mathcal{F}}$
is not necessarily deterministic since Algorithms \ref{algo:EVL+RPBF}
and \ref{algo:EVL+RKHS} use randomized function fitting. We can now
write both algorithms succinctly as 
\begin{equation}
v_{k+1}=\widehat{G}\,v_{k}:=\widehat{\Pi}_{\mathcal{F}}\widehat{T}\,v_{k},\label{eq:EVL}
\end{equation}
which can be further written in terms of residual error $\varepsilon_{k}=\widehat{G}\,v_{k}-T\,v_{k}$
as 
\begin{equation}
v_{k+1}=\widehat{G}\,v_{k}=T\,v_{k}+\varepsilon_{k}.\label{eq:EVL_error}
\end{equation}
Iteration of these operators corresponds to repeated samples from
$\left(\Omega,\mathcal{B}(\Omega),P\right)$, so we define the space
of sequences $\left(\Omega^{\infty},\mathcal{B}(\Omega^{\infty}),\mathcal{P}\right)$
where $\Omega^{\infty}=\times_{k=0}^{\infty}\Omega$ with elements
denoted $\boldsymbol{\omega}=\left(\omega_{k}\right)_{k\geq0}$, $\mathcal{B}\left(\Omega^{\infty}\right)=\times_{k=0}^{\infty}\mathcal{B}\left(\Omega\right)$,
and $\mathcal{P}$ is the probability measure on $\left(\Omega^{\infty},\,\mathcal{B}\left(\Omega^{\infty}\right)\right)$
guaranteed by the Kolmogorov extension theorem applied to $\mathcal{P}$.

The random sequences $\left(v_{k}\right)_{k\geq0}$ in Algorithms
\ref{algo:EVL+RPBF} and \ref{algo:EVL+RKHS} given by 
\begin{eqnarray*}
v_{k+1} & = & \widehat{\Pi}_{\mathcal{F}}\widehat{T}\left(\omega_{k}\right)v_{k}\\
 & = & \widehat{\Pi}_{\mathcal{F}}\widehat{T}\left(\omega_{k}\right)\widehat{\Pi}_{\mathcal{F}}\widehat{T}\left(\omega_{k-1}\right)\cdots\widehat{\Pi}_{\mathcal{F}}\widehat{T}\left(\omega_{0}\right)v_{0},
\end{eqnarray*}
for all $k\geq0$ is a stochastic process defined on $\left(\Omega^{\infty},\mathcal{B}(\Omega^{\infty}),\mathcal{P}\right)$.
We now analyze error propagation over the iterations.

{Let us now bound how the Bellman residual at each iteration
of EVL is changing.} There have already been some results which address the error propagation both in $\mathcal{L}_{\infty}$
and $\mathcal{L}_{p}$ $(p\geq1)$ norms \cite{munos2007performance}.
After adapting \cite[Lemma 3]{munos2008finite}, we obtain the
following {$p$-norm error bounds} on $v_{K}-v^{*}$ in terms of the
errors $\left\{ \varepsilon_{k}\right\} _{k\geq0}$. 
\begin{lemma}
\label{lem:error_prop} For any $K\geq1$, and $\varepsilon>0$, suppose
$\|\varepsilon_{k}\|_{p,\,\mu}\leq\varepsilon$ for all $k=0,\,1,\ldots,\,K-1$,
then 
\begin{equation}
\|v_{K}-v^{*}\|_{p,\,\rho}\leq2\left(\frac{1-\gamma^{K+1}}{1-\gamma}\right)^{\frac{p-1}{p}}\left[C_{\rho,\,\mu}^{1/p}\varepsilon+\gamma^{K/p}\left(2\,v_{\max}\right)\right].\label{eq:Error_Lp}
\end{equation}
\end{lemma}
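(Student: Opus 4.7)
The plan is to adapt the error-propagation argument of Munos and Szepesv\'ari. The starting point is the one-step recursion $v_{k+1} = T\,v_k + \varepsilon_k$ from \eqref{eq:EVL_error}. Let $\pi^*$ be an optimal stationary deterministic policy and let $\pi_k \in \Pi$ be greedy with respect to $v_k$, so that $T\,v_k = T^{\pi_k} v_k$. Combining $T\,v^* = T^{\pi^*} v^* \leq T^{\pi_k} v^*$ with $T\,v_k \leq T^{\pi^*} v_k$ and the fact that each $T^\pi$ acts linearly on differences, I would establish the two-sided pointwise estimate
\[
\gamma\,P^{\pi_k}(v_k - v^*) + \varepsilon_k \;\leq\; v_{k+1} - v^* \;\leq\; \gamma\,P^{\pi^*}(v_k - v^*) + \varepsilon_k,
\]
where $P^{\pi}$ denotes the stochastic kernel under policy $\pi$.

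Next, I would iterate these inequalities from $k=0$ up to $k=K-1$ to obtain pointwise upper and lower bounds on $v_K - v^*$ whose coefficient of $\varepsilon_k$ is $\gamma^{K-1-k}$ times a product of transition kernels (built from $P^{\pi^*}$ for the upper side and from the $P^{\pi_j}$ for the lower side), plus an initial term $\gamma^K$ times $v_0 - v^*$. Taking the pointwise maximum of the two sides, bounding $|v_0 - v^*|$ uniformly by $2 v_{\max}$, and replacing signed $\varepsilon_k$ by $|\varepsilon_k|$ yields
\[
|v_K - v^*| \;\leq\; \sum_{k=0}^{K-1}\gamma^{K-1-k}\, A_k\, |\varepsilon_k| \;+\; \gamma^K\,(2\,v_{\max}),
\]
where each $A_k$ is a stochastic kernel obtained by averaging the dominating kernels from the upper and lower bounds.

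The next step is to raise both sides to the $p$-th power and integrate against $\rho$. The coefficients $\{\gamma^{K-1-k}\}_{k=0}^{K-1}$ together with $\gamma^K$ sum to $S := (1-\gamma^{K+1})/(1-\gamma)$, so the right-hand side equals $S$ times a convex combination of nonnegative terms. Applying Jensen's inequality to $x \mapsto x^p$ gives
\[
|v_K - v^*|^p \;\leq\; S^{p-1}\!\left[\sum_{k=0}^{K-1}\gamma^{K-1-k}\, (A_k|\varepsilon_k|)^p + \gamma^K (2 v_{\max})^p\right].
\]
A second application of Jensen inside each $A_k$ (which is a probability kernel) gives $(A_k|\varepsilon_k|)^p \leq A_k|\varepsilon_k|^p$ fiberwise. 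Integrating against $\rho$ and invoking Assumption \ref{assu:Kernel}(ii) bounds each $\int_{\mathbb{S}} A_k|\varepsilon_k|^p\,d\rho$ by $c_{\rho,\mu}(K-1-k)\,\|\varepsilon_k\|_{p,\mu}^p$. Summing the geometric series and using $\|\varepsilon_k\|_{p,\mu}\leq \varepsilon$ yields
\[
\|v_K - v^*\|_{p,\rho}^p \;\leq\; S^{p-1}\bigl[C_{\rho,\mu}\,\varepsilon^p + \gamma^K (2 v_{\max})^p\bigr].
\]
Taking $p$-th roots and applying $(a+b)^{1/p} \leq a^{1/p} + b^{1/p}$ for $a,b\ge 0$ then produces the claimed bound, with the outer factor of $2$ absorbing the constants from combining the upper and lower pointwise estimates.

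The main obstacle will be the bookkeeping of the convex combination so that the exponent $(p-1)/p$ appears correctly on $S = (1-\gamma^{K+1})/(1-\gamma)$ after Jensen, and so that the change of measure via concentrability sums to exactly $C_{\rho,\mu}=\sum_{m\geq 0}\gamma^m c_{\rho,\mu}(m)$ rather than a larger constant. A secondary subtlety is the double use of Jensen's inequality: once to move the $p$-th power outside the finite convex combination in $k$, and once to move it inside the stochastic kernel $A_k$; I must verify that both steps are legitimate and that no extraneous factors are introduced, which is automatic because $A_k$ is a probability kernel and the coefficient weights are nonnegative.
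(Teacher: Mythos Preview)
Your proposal is correct and follows essentially the same route as the paper's proof: derive two-sided pointwise bounds via greedy policies $\pi_k$ and $\pi^*$, average the dominating kernels to obtain probability operators $A_k$, apply Jensen's inequality twice (once across the convex combination in $k$, once inside each $A_k$), change measure via Assumption~\ref{assu:Kernel}(ii), and finish with subadditivity of $x\mapsto x^{1/p}$. The only point to tighten is that your displayed bound on $|v_K-v^*|$ should already carry the factor $2$ once you define $A_k$ as the \emph{average} of the upper- and lower-bound kernels (since $|g|\le |f|+|h|$ when $f\le g\le h$); you acknowledge this at the end, but writing it explicitly in the intermediate step makes the exponent bookkeeping ($2^p S^{p-1}$ after Jensen, then $2\,S^{(p-1)/p}$ after the $p$-th root) transparent.
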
 
where $C_{\rho,\,\mu}$ is as defined in Assumption \ref{assu:Random}.
Note that Lemma \ref{lem:error_prop} assumes that {$\|\varepsilon_{k}\|_{p,\,\mu}\leq\varepsilon$}
which we will show subsequently that it is true with high probability.

The second inequality is for the supremum norm. 
\begin{lemma} \label{lem:Error_supremum}
For any $K\geq1$ and $\varepsilon>0$, suppose $\|\varepsilon_{k}\|_{\infty}\leq\varepsilon$
for all $k=0,\,1,\ldots,\,K-1$, then 
\begin{equation}
\|v_{K}-v^{*}\|_{\infty}\leq\varepsilon/\left(1-\gamma\right)+\gamma^{K}\left(2\,v_{\max}\right).\label{eq:Error_supremum}
\end{equation}
\end{lemma}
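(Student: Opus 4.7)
The plan is to exploit the fact that the exact Bellman operator $T$ is a $\gamma$-contraction in the supremum norm and that $v^*$ is its unique fixed point, then run a one-step recursion on the error $\|v_k - v^*\|_\infty$ and iterate.

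First I would start from the error decomposition given in \eqref{eq:EVL_error}, namely $v_{k+1} = T v_k + \varepsilon_k$. Subtracting $v^* = T v^*$ from both sides and applying the triangle inequality in the supremum norm gives
\[
\|v_{k+1} - v^*\|_\infty \;\leq\; \|T v_k - T v^*\|_\infty + \|\varepsilon_k\|_\infty \;\leq\; \gamma\, \|v_k - v^*\|_\infty + \varepsilon,
\]
where the last step uses the standard $\gamma$-contraction property of the Bellman operator in $\|\cdot\|_\infty$ together with the hypothesis $\|\varepsilon_k\|_\infty \leq \varepsilon$.

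Next I would unroll this linear recursion from $k = 0$ up to $k = K-1$ to obtain
\[
\|v_K - v^*\|_\infty \;\leq\; \gamma^K \|v_0 - v^*\|_\infty + \varepsilon \sum_{j=0}^{K-1} \gamma^j \;\leq\; \gamma^K \|v_0 - v^*\|_\infty + \frac{\varepsilon}{1-\gamma}.
\]
To finish, I would bound the initial gap: since both $v_0$ and $v^*$ lie in $\mathcal{F}(\mathbb{S};v_{\max})$ (by the boundedness of $c$ and the construction of the algorithm which keeps iterates in this class), we have $\|v_0 - v^*\|_\infty \leq 2 v_{\max}$. Substituting yields exactly \eqref{eq:Error_supremum}.

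There is no real obstacle here; the only subtlety worth being explicit about is that the residual $\varepsilon_k = \widehat{G} v_k - T v_k$ is a random function, so the bound $\|\varepsilon_k\|_\infty \leq \varepsilon$ is a pointwise (in $\omega$) hypothesis of the lemma and the conclusion holds on the same event. The quantitative translation into a probabilistic guarantee on this event is deferred to the later parts of Section~\ref{sec:analysis}, so the lemma itself is a deterministic contraction-mapping error propagation result whose proof is essentially a two-line induction.
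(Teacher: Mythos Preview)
Your proof is correct, but it takes a different route from the paper's. The paper does not invoke the $\gamma$-contraction of $T$ directly; instead it first establishes point-wise sandwich bounds (Lemma~\ref{lem:point-wise}) of the form
\[
v_K - v^* \leq \sum_{k=0}^{K-1}\gamma^{K-k-1}\bigl(Q^{\pi^*}\bigr)^{K-k-1}\varepsilon_k + \gamma^K\bigl(Q^{\pi^*}\bigr)^K(v_0 - v^*)
\]
and an analogous lower bound with the greedy transition kernels $Q^{\pi_{K-1}}\cdots Q^{\pi_{k+1}}$, and only then passes to the supremum norm using $\|Qf\|_\infty \leq \|f\|_\infty$ for any transition kernel~$Q$. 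Your argument collapses all of this into the single line $\|Tv_k - Tv^*\|_\infty \leq \gamma\|v_k - v^*\|_\infty$, which is certainly cleaner for the $\mathcal{L}_\infty$ case. The paper's detour through Lemma~\ref{lem:point-wise} is there because those point-wise bounds are what drive the $\mathcal{L}_p$ result in Lemma~\ref{lem:error_prop} (where $T$ is \emph{not} a contraction), so the authors are simply reusing the same machinery for both lemmas rather than giving the shortest possible proof of the sup-norm case on its own.
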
 Inequalities (\ref{eq:Error_Lp}) and (\ref{eq:Error_supremum})
are the key to analyzing iteration of Equation (\ref{eq:EVL_error}).

\subsection{Convergence analysis using stochastic dominance}

We now provide a (unified) convergence analysis for iteration of a sequence of random operators given by (\ref{eq:EVL}) and (\ref{eq:EVL_error}). Later, we will show how it can be applied to Algorithms \ref{algo:EVL+RPBF} and \ref{algo:EVL+RKHS}. 
We will use $\|\cdot\|$ to denote a general norm in the following
discussion, since our idea applies to all instances of $p\in[1,\,\infty)$
and $p=\infty$ simultaneously. The magnitude of the error in iteration
$k\geq0$ is then $\|\varepsilon_{k}\|$. We make the following key
assumption for a general EVL algorithm. \begin{assumption} \label{assu:Random}
For $\varepsilon>0$, there is a $q\in\left(0,\,1\right)$ such that
$\text{Pr}\left\{ \|\varepsilon_{k}\|\leq\varepsilon\right\} \geq q$
for all $k\geq0$. \end{assumption} Assumption \ref{assu:Random}
states that we can find a lower bound on the probability of the event
$\left\{ \|\varepsilon_{k}\|\leq\varepsilon\right\} $ that is independent
of $k$ and $\left(v_{k}\right)_{k\geq0}$ (but does depend on $\varepsilon$).
Equivalently, we are giving a lower bound on the probability of the
event $\left\{ \|T\,v_{k}-\widehat{G}\,v_{k}\|\leq\varepsilon\right\} $.
This is possible for all of the algorithms that we proposed earlier.
In particular, we can control $q$ in Assumption \ref{assu:Random}
through the sample sizes in each iteration of EVL. Naturally, for
a given $\varepsilon$, $q$ increases as the number of samples grows.

We first choose $\varepsilon>0$ and the number of iterations $K^{*}$
for our EVL algorithms to reach a desired accuracy (this choice of
$K^{*}$ comes from the inequalities (\ref{eq:Error_Lp}) and (\ref{eq:Error_supremum})).
We call iteration $k$ ``good'' if the error $\|\varepsilon_{k}\|$
is within our desired tolerance $\varepsilon$ and ``bad'' when
the error is greater than our desired tolerance. We then construct
a stochastic process $(X_{k})_{k\geq0}$ on $\left(\Omega^{\infty},\,\mathcal{B}\left(\Omega^{\infty}\right),\,\mathcal{P}\right)$
with state space $\mathcal{K}:=\left\{ 1,\,\,2,\ldots,\,K^{*}\right\} $
such that 
\[
X_{k+1}=\begin{cases}
\max\left\{ X_{k}-1,\,1\right\} , & \text{if iteration \ensuremath{k} is "good"},\\
K^{*}, & \text{otherwise}.
\end{cases}
\]
The stochastic process $(X_{k})_{k\geq0}$ is easier to analyze than
$(v_{k})_{k\geq0}$ because it is defined on a finite state space,
however $(X_{k})_{k\geq0}$ is not necessarily a Markov chain.

We next construct a ``dominating'' Markov chain $(Y_{k})_{k\geq0}$
to help us analyze the behavior of $(X_{k})_{k\geq0}$. We construct
$(Y_{k})_{k\geq0}$ on $\left(\mathcal{K}^{\infty},\,\mathcal{B}\right)$,
the canonical measurable space of trajectories on $\mathcal{K}$,
so $Y_{k}\mbox{ : }\mathcal{K}^{\infty}\rightarrow\mathbb{R}$, and
we let $\mathcal{Q}$ denote the probability measure of $(Y_{k})_{k\geq0}$
on $\left(\mathbb{R}^{\infty},\,\mathcal{B}\right)$. Since $(Y_{k})_{k\geq0}$
will be a Markov chain by construction, the probability measure $\mathcal{Q}$
is completely determined by an initial distribution on $\mathbb{R}$
and a transition kernel for $(Y_{k})_{k\geq0}$. We always initialize
$Y_{0}=K^{*}$, and then construct the transition kernel as follows
\[
Y_{k+1}=\begin{cases}
\max\left\{ Y_{k}-1,\,1\right\} , & \mbox{w.p. }q,\\
K^{*}, & \mbox{w.p. }1-q,
\end{cases}
\]
where $q$ is the probability of a ``good'' iteration with respect
to the corresponding norm. Note that the $(Y_{k})_{k\geq0}$ we introduce
here is different and has much smaller state space than the one we
introduced in \cite{haskell2016empirical} leading to stronger convergence
guarantees.

We now describe a stochastic dominance relationship between the two
stochastic processes $(X_{k})_{k\geq0}$ and $(Y_{k})_{k\geq0}$.
We will establish that $(Y_{k})_{k\geq0}$ is ``larger'' than $(X_{k})_{k\geq0}$
in a stochastic sense. 
\begin{definition} Let $X$ and $Y$ be two
real-valued random variables, then $X$ is \textit{stochastically
dominated} by $Y$, written $X\leq_{st}Y$, when $\mathbb{E}\left[f\left(X\right)\right]\leq\mathbb{E}\left[f\left(Y\right)\right]$
for all increasing functions $f\mbox{ : }\mathbb{R}\rightarrow\mathbb{R}$.
Equivalently, $X\leq_{st}Y$ when $\mbox{Pr}\left\{ X\geq\theta\right\} \leq\mbox{Pr}\left\{ Y\geq\theta\right\} $
for all $\theta$ in the support of $Y$. 
\end{definition} 
Let $\left\{ \mathcal{F}_{k}\right\} _{k\geq0}$
be the filtration on $\left(\Omega^{\infty},\,\mathcal{B}\left(\Omega^{\infty}\right),\,\mathcal{P}\right)$
corresponding to the evolution of information about $(X_{k})_{k\geq0}$,
and let $\left[X_{k+1}\,\vert\,\mathcal{F}_{k}\right]$ denote the
conditional distribution of $X_{k+1}$ given the information $\mathcal{F}_{k}$.
We have the following initial results on the relationship between
$(X_{k})_{k\geq0}$ and $(Y_{k})_{k\geq0}$.

{The following theorem, our main result for our random operator analysis,
establishes the relationship between the stochastic process $\left\{ X_{k}\right\} _{k\geq0}$
and the Markov chain $\left\{ Y_{k}\right\} _{k\geq0}$. Under Assumption
3, this result allows us to bound the stochastic process $\left\{ X_{k}\right\} _{k\geq0}$
which keeps track of the error in EVL with the dominating Markov chain
$\left\{ Y_{k}\right\} _{k\geq0}$.}
\begin{theorem}
\label{thm:Random}
{Under Assumption \ref{assu:Random}:}\\
(i) $X_{k}\leq_{st}Y_{k}$ for all $k\geq0$.\\
(ii) $\mbox{Pr}\left\{ Y_{k}\leq\eta\right\} \leq\mbox{Pr}\left\{ X_{k}\leq\eta\right\} $
for any $\eta\in\mathbb{R}$ and all $k\geq0$. 
\end{theorem}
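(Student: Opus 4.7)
The plan is to prove part (i) by induction on $k \geq 0$, from which part (ii) will follow as a standard reformulation of stochastic dominance. For the base case, both processes are initialized deterministically at $K^{*}$ (the paper's convention for $(Y_{k})$, and the natural choice for $(X_{k})$ since no iterations have yet been classified), so $X_{0} = Y_{0}$ a.s.\ and $X_{0} \leq_{st} Y_{0}$ is trivial.

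For the inductive step, suppose $X_{k} \leq_{st} Y_{k}$ and fix an arbitrary $\theta \in \mathbb{R}$. I would introduce the auxiliary function
\[
g(x) \;:=\; q\,\mathbf{1}\{\max\{x-1,1\}\geq\theta\} \;+\; (1-q)\,\mathbf{1}\{K^{*}\geq\theta\},
\]
which by the construction of the dominating chain satisfies $g(Y_{k}) = \Pr\{Y_{k+1}\geq\theta \,\vert\, Y_{k}\}$, and which is manifestly non-decreasing in $x$. The key intermediate step is the almost-sure conditional bound
\[
\Pr\{X_{k+1}\geq\theta \,\vert\, \mathcal{F}_{k}\} \;\leq\; g(X_{k}),
\]
which follows from a short case analysis on whether $\max\{X_{k}-1,1\}\geq\theta$ and whether $K^{*}\geq\theta$, combined with Assumption \ref{assu:Random} (so that the conditional probability of a bad iteration is at most $1-q$). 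The intuition is that raising the good-iteration probability above $q$ only shifts mass from the larger outcome $K^{*}$ to the smaller outcome $\max\{X_{k}-1,1\}$, which can only decrease every upper tail. Taking total expectations and invoking both the induction hypothesis and the monotonicity of $g$,
\[
\Pr\{X_{k+1}\geq\theta\} \;\leq\; \mathbb{E}[g(X_{k})] \;\leq\; \mathbb{E}[g(Y_{k})] \;=\; \Pr\{Y_{k+1}\geq\theta\},
\]
which establishes $X_{k+1} \leq_{st} Y_{k+1}$ and closes the induction. Part (ii) is then immediate, since by definition $X_{k}\leq_{st}Y_{k}$ is equivalent to $\Pr\{X_{k}>\eta\} \leq \Pr\{Y_{k}>\eta\}$ for every $\eta\in\mathbb{R}$, and taking complements yields exactly $\Pr\{Y_{k}\leq\eta\} \leq \Pr\{X_{k}\leq\eta\}$.

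The main obstacle I anticipate is that $(X_{k})_{k\geq 0}$ is adapted to the full filtration $\{\mathcal{F}_{k}\}$ and is not itself a Markov chain, so the textbook lemma that monotone Markov kernels preserve stochastic order does not apply off-the-shelf. The function $g$ above is exactly the device that circumvents this: it simultaneously serves as an almost-sure upper bound on the non-Markov conditional tail of $X_{k+1}$ and as the exact conditional tail of the Markov chain $Y_{k+1}$, and its monotonicity in $x$ is precisely the regularity needed to promote the step-$k$ dominance to step $k+1$ via the induction hypothesis. Once $g$ is in place, the remaining computations are routine.
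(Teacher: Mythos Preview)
Your proof is correct and follows essentially the same approach as the paper: induction on $k$, with the step driven by (a) stochastic monotonicity of the $Y$-kernel and (b) the conditional dominance $[X_{k+1}\mid\mathcal{F}_k]\leq_{st}[Y_{k+1}\mid Y_k=X_k]$ coming from Assumption~\ref{assu:Random}. The only difference is packaging: the paper invokes external lemmas (Lemmas~A.1--A.2 of \cite{haskell2016empirical} and Theorems~1.A.3(d), 1.A.6 of \cite{Shaked2007}) for these two ingredients and for the composition step, whereas your auxiliary function $g$ inlines all of this into a direct tail-probability computation---more self-contained, but the same argument.
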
 
{The proof is relegated to Appendix \ref{app:C}.} 
By Theorem \ref{thm:Random}, if $X_{K}\leq_{st}Y_{K}$ and we can
make $\mbox{Pr}\left\{ Y_{K}\leq\eta\right\} $ large, then we will
also obtain a meaningful bound on $\text{Pr}\left\{ X_{K}\leq\eta\right\} $.
Following this observation, the next two corollaries are the main
mechanisms for our general sample complexity results for EVL. 

{The following corollary follows from bounding the mixing time of the dominating Markov chain $\left\{ Y_{k}\right\} _{k\geq0}$
and employing our general $p-$norm error bound Lemma \ref{lem:error_prop}.}
\begin{corollary}
\label{cor:Random_Lp} For a given $p\in[1,\,\infty)$, and any $\varepsilon>0$,
and $\delta\in\left(0,\,1\right)$, suppose Assumption \ref{assu:Random}
holds for this $\varepsilon$, and choose any $K^{*}\geq1$. Then
for $q\geq\left(1/2+\delta/2\right)^{1/\left(K^{*}-1\right)}$ and
$K\geq\log\left(4/\left(\left(1/2-\delta/2\right)\left(1-q\right)q^{K^{*}-1}\right)\right),$
we have 
\[
\|v_{K}-v^{*}\|_{p,\,\rho}\leq2\left(\frac{1-\gamma^{K^{*}+1}}{1-\gamma}\right)^{\frac{p-1}{p}}\left[C_{\rho,\,\mu}^{1/p}\varepsilon+\gamma^{K^{*}/p}\left(2\,v_{\max}\right)\right]
\]
with probability at least $\delta$. 
\end{corollary}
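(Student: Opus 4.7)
The plan is to reduce the probabilistic statement to a property of the dominating Markov chain $\{Y_k\}$ via the stochastic dominance of Theorem~\ref{thm:Random}, and then to invoke the deterministic error-propagation bound of Lemma~\ref{lem:error_prop} on a tail window of consecutive ``good'' iterations. First, by Theorem~\ref{thm:Random}(ii),
\[
\text{Pr}\left\{X_K \leq 1\right\} \geq \text{Pr}\left\{Y_K \leq 1\right\} = \text{Pr}\left\{Y_K = 1\right\},
\]
since both processes take values in $\{1,\ldots,K^*\}$. So it suffices to show that the hypotheses on $q$ and $K$ force $\text{Pr}\{Y_K = 1\} \geq \delta$.

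Next, I would interpret the event $\{X_K = 1\}$ algorithmically. Unwinding the recursion $X_{k+1} = \max\{X_k - 1, 1\}$ for good iterations and $X_{k+1} = K^*$ for bad ones shows that $X_K = 1$ forces a run of at least $K^*$ consecutive good iterations preceding step $K$, i.e., $\|\varepsilon_k\|_{p,\mu} \leq \varepsilon$ for $k = K - K^*, \ldots, K - 1$. Treating $v_{K - K^*}$ as a fresh initial iterate, for which $\|v_{K - K^*} - v^*\|_{\infty} \leq 2 v_{\max}$, an application of Lemma~\ref{lem:error_prop} over this window of length $K^*$ yields precisely the stated $\mathcal{L}_p$ error bound on $\|v_K - v^*\|_{p,\rho}$; the factor $((1-\gamma^{K^*+1})/(1-\gamma))^{(p-1)/p}$ and the $\gamma^{K^*/p}(2 v_{\max})$ term come directly from instantiating the lemma with $K \leftarrow K^*$.

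Finally, I would bound $\text{Pr}\{Y_K = 1\}$ via a Markov-chain renewal analysis. Since the transitions of $\{Y_k\}$ are in fact i.i.d.\ (``bad'' with probability $1-q$ regardless of state), the bad steps form an i.i.d.\ Bernoulli$(1-q)$ sequence, and the event $\{Y_K = 1\}$ is governed by the pattern of bad steps in the last $K^*$ iterations. The condition $q \geq ((1+\delta)/2)^{1/(K^*-1)}$ gives $q^{K^*-1} \geq (1+\delta)/2$, which lower-bounds the stationary mass at state~$1$ by $(1+\delta)/2 > \delta$. The factor $(1-q)q^{K^*-1}$ inside the logarithmic requirement on $K$ is the probability of a single complete ``regeneration cycle'' (one bad step followed by $K^*-1$ good ones); partitioning the $K$ iterations into such cycles and applying a geometric tail bound ensures that at least one complete cycle terminates late enough in the horizon with probability $\geq (1-\delta)/2$, which in turn forces $\text{Pr}\{Y_K = 1\} \geq \delta$.

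The main obstacle is this last step: producing a sharp enough quantitative mixing estimate for $\{Y_k\}$ so that the stated logarithmic form of the bound on $K$, with precisely the factor $(1-q)q^{K^*-1}$, emerges rather than merely a qualitative convergence to stationarity. The reduction via stochastic dominance and the application of Lemma~\ref{lem:error_prop} are comparatively mechanical once the tail bound on $\text{Pr}\{Y_K = 1\}$ is in hand.
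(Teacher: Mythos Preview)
Your high-level structure matches the paper exactly: reduce to $\text{Pr}\{Y_K=1\}\geq\delta$ via Theorem~\ref{thm:Random}, interpret $\{X_K=1\}$ as a terminal run of good iterations so that Lemma~\ref{lem:error_prop} (with $K\leftarrow K^{*}$) applies, and then bound $\text{Pr}\{Y_K=1\}$. The paper in fact leaves the middle step (connecting $\{X_K=1\}$ to the error bound) implicit, so your explicit treatment is a small improvement in exposition; just be careful that $\{X_K=1\}$ strictly guarantees only $K^{*}-1$ good steps after the last visit to state $K^{*}$, so the indexing in your application of Lemma~\ref{lem:error_prop} needs a minor adjustment.

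Where you diverge is the last step, and the paper's route is shorter and yields the stated constants directly. Rather than a renewal/regeneration argument, the paper computes the stationary distribution of $\{Y_k\}$ explicitly: $\mu(1)=q^{K^{*}-1}$, $\mu(i)=(1-q)q^{K^{*}-i}$ for $2\leq i\leq K^{*}-1$, $\mu(K^{*})=1-q$, so that $\mu_{\min}\geq (1-q)q^{K^{*}-1}=:\mu^{*}$. It then invokes the standard mixing-time inequality $t_{\text{mix}}(\delta')\leq \log\bigl(1/(\delta'\mu_{\min})\bigr)$ (e.g.\ \cite[Theorem 12.3]{Levin_Mixing_2008}); for $K\geq \log\bigl(1/(\delta'\mu^{*})\bigr)$ one gets $|\text{Pr}\{Y_K=1\}-q^{K^{*}-1}|\leq 2\delta'$. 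Setting $q^{K^{*}-1}=(1+\delta)/2$ and $2\delta'=(1-\delta)/2$ then gives $\text{Pr}\{Y_K=1\}\geq\delta$ and produces exactly the logarithmic bound on $K$ in the statement. So the factor $(1-q)q^{K^{*}-1}$ enters as a lower bound on $\mu_{\min}$ in the mixing-time bound, not as a cycle probability; your renewal picture is morally related (via Doeblin/minorization) but would require extra work to recover the same constants.
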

{The proof is relegated to Appendix \ref{app:C}.}

{The next Corollary uses the same reasoning for the supremum norm case. It follows from bounding the mixing time of the dominating Markov chain $\left\{ Y_{k}\right\} _{k\geq0}$ and employing our general $\infty-$norm error bound Lemma \ref{lem:Error_supremum}.}
\begin{corollary}
\label{cor:Random_supremum} Given any $\varepsilon>0$ and $\delta\in\left(0,\,1\right)$,
suppose Assumption \ref{assu:Random} holds for this $\varepsilon$,
and choose any $K^{*}\geq1$. For $q\geq\left(1/2+\delta/2\right)^{1/\left(K^{*}-1\right)}$
and $K\geq\log\left(4/\left(\left(1/2-\delta/2\right)\left(1-q\right)q^{K^{*}-1}\right)\right),$
we have 
\[
\text{Pr}\left\{ \|v_{K}-v^{*}\|_{\infty}\leq\varepsilon/\left(1-\gamma\right)+\gamma^{K^{*}}\left(2\,v_{\max}\right)\right\} \geq\delta.
\]
\end{corollary}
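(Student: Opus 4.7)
My plan is to mirror the proof of Corollary \ref{cor:Random_Lp} (relegated to Appendix \ref{app:C}), with the $p$-norm error-propagation bound from Lemma \ref{lem:error_prop} replaced by its supremum-norm counterpart, Lemma \ref{lem:Error_supremum}. The backbone of the analysis is the auxiliary-chain construction already developed: the iterates $(v_k)$ are controlled by the process $(X_k)$, which is in turn stochastically dominated by the easy-to-analyze finite-state Markov chain $(Y_k)$, via Theorem \ref{thm:Random}.

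First I would establish a deterministic implication: on the event $\{X_K = 1\}$, the iterate $v_K$ automatically obeys the claimed error bound. Unwinding the recursion $X_{k+1} = \max(X_k-1, 1)$ (on a good step) or $X_{k+1} = K^*$ (on a bad step), the event $\{X_K = 1\}$ forces a block of consecutive good iterations, with $\|\varepsilon_k\|_\infty \leq \varepsilon$, immediately preceding time $K$ and of length at least $K^*$. Letting $k_0$ denote the start of this block, the a priori bound $\|v_{k_0} - v^*\|_\infty \leq 2 v_{\max}$ together with Lemma \ref{lem:Error_supremum} applied to the shifted trajectory $v_{k_0}, v_{k_0+1}, \ldots, v_K$ gives
\[
\|v_K - v^*\|_\infty \leq \frac{\varepsilon}{1-\gamma} + \gamma^{K^*}(2 v_{\max})
\]
on the event $\{X_K = 1\}$.

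Next I would convert this into a probability statement. Since $X_K \geq 1$ almost surely, Theorem \ref{thm:Random}(ii) yields $\text{Pr}\{X_K = 1\} = \text{Pr}\{X_K \leq 1\} \geq \text{Pr}\{Y_K \leq 1\} = \text{Pr}\{Y_K = 1\}$, reducing the problem to a lower bound on $\text{Pr}\{Y_K = 1\}$ for the finite-state Markov chain $(Y_k)$. A direct computation via the balance equations shows that $(Y_k)$ has stationary distribution $\pi$ with $\pi(K^*) = 1-q$, $\pi(i) = (1-q) q^{K^* - i}$ for $2 \leq i \leq K^*-1$, and $\pi(1) = q^{K^*-1}$. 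Moreover $(Y_k)$ regenerates (is reset to $K^*$) with probability $1-q$ at every step, which provides geometric ergodicity. Under $q \geq (1/2 + \delta/2)^{1/(K^*-1)}$, we have $\pi(1) \geq 1/2 + \delta/2$, and under the logarithmic lower bound on $K$ the total-variation distance between $\mathcal{L}(Y_K)$ and $\pi$ is at most $1/2 - \delta/2$, so that $\text{Pr}\{Y_K = 1\} \geq \pi(1) - (1/2 - \delta/2) \geq \delta$.

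The principal technical obstacle is to extract the quantitative mixing-time estimate for $(Y_k)$ that reproduces precisely the stated threshold $K \geq \log(4/((1/2 - \delta/2)(1-q)q^{K^*-1}))$; this requires careful accounting of the renewal structure of the chain together with the initial condition $Y_0 = K^*$, since the factor $(1-q)q^{K^*-1}$ in the denominator tracks the probability of a successful ``descent'' to state $1$ within a single renewal epoch. Once this mixing bound is in place, the remainder of the argument is mechanical and essentially identical to the $p$-norm case, completing the proof.
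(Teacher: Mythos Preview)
Your proposal is correct and follows essentially the same route as the paper: the paper states explicitly that Corollary~\ref{cor:Random_supremum} ``uses the same reasoning'' as Corollary~\ref{cor:Random_Lp} with Lemma~\ref{lem:Error_supremum} in place of Lemma~\ref{lem:error_prop}, and your outline matches that structure step for step --- the implication $\{X_K=1\}\Rightarrow$ (sup-norm bound via a run of good iterations), the domination $\Pr\{X_K=1\}\geq\Pr\{Y_K=1\}$ from Theorem~\ref{thm:Random}, the stationary distribution computation, and the mixing-time lower bound on $\Pr\{Y_K=1\}$.

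One small remark: what you flag as the ``principal technical obstacle'' --- the quantitative mixing estimate producing the factor $(1-q)q^{K^*-1}$ --- is handled in the paper not by a bespoke renewal argument but by a direct appeal to the standard bound $t_{\mathrm{mix}}(\delta')\leq\log\bigl(1/(\delta'\,\mu_{\min})\bigr)$ (cf.\ \cite[Theorem~12.3]{Levin_Mixing_2008}), together with the elementary observation that $\mu^*\!\left(q;K^*\right)=(1-q)q^{K^*-1}\leq\mu_{\min}(q;K^*)$. So the constant in the denominator is simply a crude lower bound on the minimum stationary probability, not a renewal-epoch probability per se; once you recognize this, the step is immediate rather than delicate.
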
 The sample complexity results for both EVL algorithms
from Section \ref{sec:algos} follow from Corollaries \ref{cor:Random_Lp}
and \ref{cor:Random_supremum}. This is shown next.


\subsection{Proofs of Theorems \ref{thm:RPBF_L2}, \ref{thm:RPBF_L1}, and \ref{thm:RKHS_Linfty}}

We now apply our random operator framework to both EVL algorithms.
We will see that it is easy to check the conditions of Corollaries
\ref{cor:Random_Lp} and \ref{cor:Random_supremum}, from which we
obtain specific sample complexity results. {We will use Theorems \ref{thm:Bellman_L2}, \ref{thm:Bellman_L1}, and \ref{thm:Supremum_Bellman} which are all ``one-step'' results which bound the error in a single step of Algorithm \ref{algo:EVL+RPBF} (in the 1- and 2-norm) and Algorithm \ref{algo:EVL+RKHS} (in the $\infty-$norm) compared to the true Bellman operator.}

We first give the proof of Theorem \ref{thm:RPBF_L2}. We let
$p\left(N,\,M,\,J,\,\varepsilon\right)$ denote the a lower bound
on the probability of the event $\left\{ \|\widehat{T}\,v-T\,v\|_{2,\,\mu}\leq\varepsilon\right\} $.
\begin{proof}{(}of Theorem \ref{thm:RPBF_L2}{)} Starting with
	inequality (\ref{eq:Error_Lp}) for $p=2$ and using the statement
	of Theorem \ref{thm:Bellman_L2} in Appendix \ref{app:C}, we have $\|v_{K}-v^{*}\|_{2,\,\rho}$
	\begin{eqnarray*}
		& \leq & 2\left(\frac{1}{1-\gamma}\right)^{1/2}C_{\rho,\,\mu}^{1/2}\left(d_{2,\,\mu}\left(T\,\mathcal{F}\left(\Theta\right),\,\mathcal{F}\left(\Theta\right)\right)+\varepsilon\right)\\
		&  & +4\left(\frac{1}{1-\gamma}\right)^{1/2}v_{\max}\gamma^{K/2},
	\end{eqnarray*}
	when $\|\varepsilon_{k}\|_{2,\,\mu}\leq d_{2,\,\mu}\left(T\,\mathcal{F}\left(\Theta\right),\,\mathcal{F}\left(\Theta\right)\right)+\varepsilon$
	for all $k=0,\,1,\ldots,\,K-1$. We choose $K^{*}\geq1$ to satisfy
	\[
	4\left(\frac{1}{1-\gamma}\right)^{1/2}v_{\max}\gamma^{K^{*}/2}\leq2\left(\frac{1}{1-\gamma}\right)^{1/2}C_{\rho,\,\mu}^{1/2}\varepsilon
	\]
	which implies $K^{*}=2\left\lceil \frac{\ln\left(C_{\rho,\,\mu}^{1/2}\varepsilon\right)-\ln\left(2\,v_{\max}\right)}{\ln\,\gamma}\right\rceil .$
	Based on Corollary \ref{cor:Random_Lp}, we just need to choose $N,\,M,\,J$
	such that $p\left(N,\,M,\,J,\,\varepsilon\right)\geq\left(1-\delta/2\right)^{1/\left(K^{*}-1\right)}$.
	We then apply the statement of Theorem \ref{thm:Bellman_L1} with
	$p=1-\left(1-\delta/2\right)^{1/\left(K^{*}-1\right)}$. \end{proof}

We  now give the proof of Theorem \ref{thm:RPBF_L1} along the same lines as for Theorem \ref{thm:RPBF_L1}. 
Let $p\left(N,\,M,\,J,\,\varepsilon\right)$ denote
the lower bound on the probability of the event $\left\{ \|\widehat{T}\,v-T\,v\|_{1,\,\mu}\leq\varepsilon\right\} $
for $\varepsilon>0$. We also note that $d_{1,\,\mu}\left(T\,v,\,\mathcal{F}\left(\Theta\right)\right)\leq d_{1,\,\mu}\left(T\,\mathcal{F}\left(\Theta\right),\,\mathcal{F}\left(\Theta\right)\right)$
for all $v\in\mathcal{F}\left(\Theta\right)$. 
\begin{proof}{(}of Theorem \ref{thm:RPBF_L1}{)} Starting with inequality (\ref{eq:Error_Lp})
for $p=1$ and using the statement of Theorem \ref{thm:Bellman_L1} in Appendix \ref{app:D},
we have $\|v_{K}-v^{*}\|_{1,\,\rho}$
\[
\leq2\,C_{\rho,\,\mu}\left(d_{1,\,\mu}\left(T\,\mathcal{F}\left(\Theta\right),\,\mathcal{F}\left(\Theta\right)\right)+\varepsilon\right)+4\,v_{\max}\gamma^{K}
\]
when $\|\varepsilon_{k}\|_{1,\,\mu}\leq d_{1,\,\mu}\left(T\,\mathcal{F}\left(\Theta\right),\,\mathcal{F}\left(\Theta\right)\right)+\varepsilon$
for all $k=0,\,1,\ldots,\,K-1$. Choose $K^{*}$ such that 
\[
4\,v_{\max}\gamma^{K}\leq2\,C_{\rho,\,\mu}\varepsilon\Rightarrow K^{*}=\left\lceil \frac{\ln\left(C_{\rho,\,\mu}\varepsilon\right)-\ln\left(2\,v_{\max}\right)}{\ln\,\gamma}\right\rceil .
\]
Based on Corollary \ref{cor:Random_Lp}, we just need to choose $N,\,M,\,J$
such that $p\left(N,\,M,\,J,\,\varepsilon\right)\geq\left(1-\delta/2\right)^{1/\left(K^{*}-1\right)}$.
We then apply the statement of Theorem \ref{thm:Bellman_L1} with
probability $1-\left(1-\delta/2\right)^{1/\left(K^{*}-1\right)}$.
\end{proof}

We now provide proof of $\mathcal{L}_{\infty}$ function fitting in
RKHS based on Theorem \ref{thm:Supremum_Bellman} in Appendix \ref{app:C}.
For this proof, we let $p\left(N,\,M,\,\varepsilon\right)$ denote
a lower bound on the probability of the event $\left\{ \|\widehat{T}\,v-T\,v\|_{\infty}\leq\varepsilon\right\} $.
\begin{proof}{(}of Theorem \ref{thm:RKHS_Linfty}{)} By inequality
(\ref{eq:Error_supremum}), we choose $\varepsilon$ and $K^{*}\geq1$
such that $\varepsilon/\left(1-\gamma\right)\leq\epsilon/2$ and $\gamma^{K^{*}}\left(2\,v_{\max}\right)\leq\epsilon/2$
by setting 
\[
K^{*}\geq\left\lceil \frac{\ln\left(\epsilon\right)-\ln\left(4\,v_{\max}\right)}{\ln\left(\gamma\right)}\right\rceil .
\]
Based on Corollary \ref{cor:Random_Lp}, we next choose $N$ and $M$
such that $p\left(N,\,M,\,\varepsilon\right)\geq\left(1-\delta/2\right)^{1/\left(K^{*}-1\right)}$.
We then apply the statement of Theorem \ref{thm:Supremum_Bellman}
with error $\epsilon\left(1-\gamma\right)/2$ and probability $1-\left(1-\delta/2\right)^{1/\left(K^{*}-1\right)}$.
\end{proof}

\section{Numerical Experiments}\label{sec:numerical}

We now present numerical performance of our algorithm by testing it on the benchmark optimal replacement problem \cite{rust1997using,munos2008finite}.  The setting is that a product (such as a car) becomes more costly to maintain with time/miles, and must be replaced it some point.  Here, the state $s_t \in \mathbb{R}_+$ represents the accumulated utilization of the product. Thus, $s_t=0$ denotes a brand new durable good. Here, $\mathbb{A}=\{0,1\}$, so at each time step, $t$, we can either replace the product $(a_t = 0)$ or keep it $(a_t = 1)$.  Replacement incurs a cost $C$ while keeping the product has a maintenance  cost, $c(s_t)$, associated with it. The transition probabilities are as follows:
\begin{equation*}
q(s_{t+1}|s_t,a_t) =
\begin{cases}
\lambda e^{-\lambda(s_{t+1}-s_t)}, & \text{if } s_{t+1} \geq s_t \text{ and } a_t=1,\\
\lambda e^{-\lambda s_{t+1}}, & \text{if } s_{t+1} \geq 0 \text{ and } a_t=0, \text{and} \\
0, & \text{otherwise}
\end{cases}
\end{equation*}
and the reward function is given by
\begin{equation*}
r(s_t,a_t) =
\begin{cases}
-c(s_t), & \text{if } a_t=1, \text{and}\\
-C-c(0), & \text{if } a_t=0.\\
\end{cases}
\end{equation*}

For our computation, we use $\gamma=0.6, \lambda=0.5, C=30$ and $c(s)=4s$. The optimal value function and the optimal policy can be computed analytically for this problem.
For EVL+RPBF, we use $J$ random parameterized Fourier functions $\{\phi(s, \theta_j) = \cos(\theta_j^Ts+b)\}_{j=1}^J$ with $\theta_j \sim \mathcal{N}(0,0.01)$ and $b \sim \text{Unif}[-\pi, \pi]$. We fix J=5. For EVL+RKHS, we use Gaussian kernel defined as $k(x,y) = \exp(\lvert \lvert x-y \rvert \rvert^2/ (2\sigma^2))$ with $1/\sigma^2 = 0.01 $ and $\mathcal{L}_2$ regularization. We fix the regularization coefficient to be $10^{-2}$. The underlying function space for FVI is polynomials of degree 4. The results are plotted after 20 iterations. 

The error in each iteration for different algorithms with $N=100$ states and $M=5$ is shown in Figure 1. On Y-axis, it shows the relative error computed as $\sup_{s \in \mathbb{S}}\vert v^*(s)-v^{\pi_k}(s)/v^*(s)\vert$ with iterations $k$ on the X-axis. It shows that EVL+RPBF has relative error below 10\% after 20 iterations. {FVI is close to it but EVL+RKHS has larger relative error though it may improve with a higher $M$ or by using other kernels.} This is also reflected in the actual runtime performance: EVL+RPBF takes 8,705s, FVI 8,654s and EVL+RKHS takes 42,173s to get within 0.1 relative error. The computational complexity of kernel methods increases quadratically with number of samples and needs a matrix inversion resulting in a slower perfomance. 
\begin{figure}[t]
\begin{center}	\includegraphics[width=0.45\textwidth]{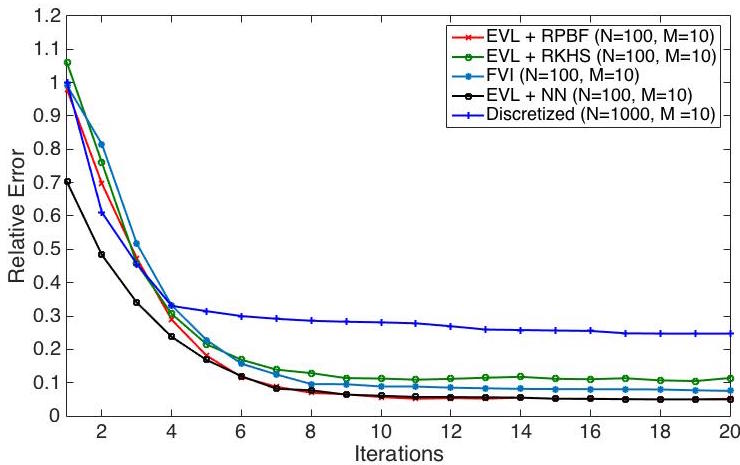}
	\caption{Relative Error with iterations for various algorithms. }
	\end{center}
		\label{fig:error}
\end{figure}

\begin{table}[b]
	\begin{center}
		\begin{tabular}[b]{cccc}
			\hline 
			Goal  & EVL+RPBF   & FVI  & EVL+RKHS \tabularnewline
			\hline 
			50  & 5.4m  & 4.8m & 8.7m \tabularnewline
			100  & 18.3m  & 23.7m & 32.1m \tabularnewline
			150  & 36.7m  & 41.5m & 54.3m \tabularnewline
			\hline 
		\end{tabular}
		\caption{Runtime performance of various algorithms on the cart-pole problem {(m=minutes)}}
		\label{tab:cartpole-runtime} %
	\end{center}
\end{table}

Note that performance of FVI depends on being able to choose suitable basis functions which for the optimal replacement problem is easy. For other problems, we may expect both EVL algorithms to perform better. So, we tested the algorithms on the cart-pole balancing problem, another benchmark problem but for which the optimal value function is unknown.  We formulate it as a continuous $4$-dimensional state space with $2$ action MDP. The state comprises of the position of the cart,$x$, velocity of the cart, $\dot{x}$, angle of the pole in radians, $\theta$  and the angular velocity of the pole, $\dot{\theta}$ . The actions are to add a force of $-10N$ or $+10N$ to the cart, pushing it left or right. We add $\pm50\%$ noise to these actions. 
{For system dynamics, let $m_c$ and $m_p$ be the mass of  cart and pole respectively. Let $l$ be the length of the pole. If $F_t$ is the force applied to the cart at time $t$, then acceleration of pole is 
$$\ddot{\theta_t} = \cfrac{g \sin \theta_t + \cos \theta_t \left(\cfrac{-F_t- m_pl \dot{\theta_t}^2\sin \theta_t}{m_c+m_p} \right)}{l\left(\cfrac{4}{3}-\cfrac{m_p cos^2 \theta_t}{m_c+m_p} \right)}$$  
and acceleration of cart is
$$\ddot{x_t} = \cfrac{F_t + m_p l \left(\dot{\theta_t}^2\sin \theta_t - \ddot{\theta_t} \cos \theta_t \right)}{m_c+m_p}. $$
Now let $\tau$ be the time step for Euler's method, we have the following state transition equations:
\begin{align*}
x_{t+1} &= x_t + \tau \dot{x_t}\\
\dot{x}_{t+1} &= \dot{x}_{t} + \tau \ddot{x_t}\\
\theta_{t+1} &= x_t + \tau \dot{\theta_t}\\
\dot{\theta}_{t+1} &= \dot{\theta}_{t} + \tau \ddot{\theta_t}
\end{align*}
}
Rewards are zero except for failure state (if the position of cart reaches beyond $\pm2.4$, or the pole exceeds an angle of $\pm12$ degrees), it is $-1$. For our experiments, we choose $N=100$ and $M=1$. In case of RPBF, we consider parameterized Fourier basis of the form $\text{cos}(\textbf{w}^{T}\textbf{s}+b)$ where $\textbf{w}=[w_{1},w_{2}]$, $w_{1},w_{2}\sim\mathcal{N}(0,1)$ and $b\sim\text{Unif}[-\pi,\pi]$. We fix $J=10$ for our EVL+RPBF. For RKHS, we consider Gaussian kernel, $K(s_{1},s_{2})=\exp\left(-\sigma{||s_{1}-s_{2}||^{2}}/2\right)$ with $\sigma=0.01$. We limit each episode to 1000 time steps. We compute the average length of the episode for which we are able to balance the pole without hitting the failure state. This is the goal in Table \ref{tab:cartpole-runtime}. The other columns show run-time needed for the algorithms to learn to achieve such a goal. 

From the table, we can see that EVL+RPBF outperforms FVI and EVL+RKHS. Note that guarantees for FVI are only available for ${\mathcal L}_2$-error and for EVL-RPBF for ${\mathcal L}_p$-error. EVL-RKHS is the only algorithm that can provide guarantees on the sup-norm error. Also note that when for problems for which the value functions are not so regular, and good basis functions difficult to guess, the EVL+RKHS method is likely to perform better but as of now we do not have a numerical example to demonstrate this.

{We also tested our algorithms on the Acrobot problem, a 2-link pendulum with only the second joint actuated. The objective is to swing the end-effector to a height which is at least the length of one link above the base starting with both links pointing downwards. The state here is six dimensional which are $\sin(\cdot)$ and $\cos(\cdot)$ of the two rotational joint angles and the joint angular velocities. There are three actions available: +1, 0 or -1, corresponding to the torque on the joint between the two pendulum links. We modify the environment available from \texttt{OpenAI} by injecting a uniform noise in the actions so that the transitions are not deterministic. The reward is 1 if the goal state is reached, else 0. We choose $N=2000, M=1, J=100$. Fig. \ref{fig:error_acro} represents the reward for both of the proposed algorithms. Not only does EVL+RPBF perform better, it is also faster than EVL+RKHS by an average of 3.67 minutes per iteration. The reason for this is that the EVL+RKHS algorithm is designed to provide guarantees on sup-error, a much more stringent requirement than the $L_p$-error that EVL+RPBF algorithm provides guarantees on.}

\begin{figure}[t]
\begin{center}	\includegraphics[width=0.5\textwidth]{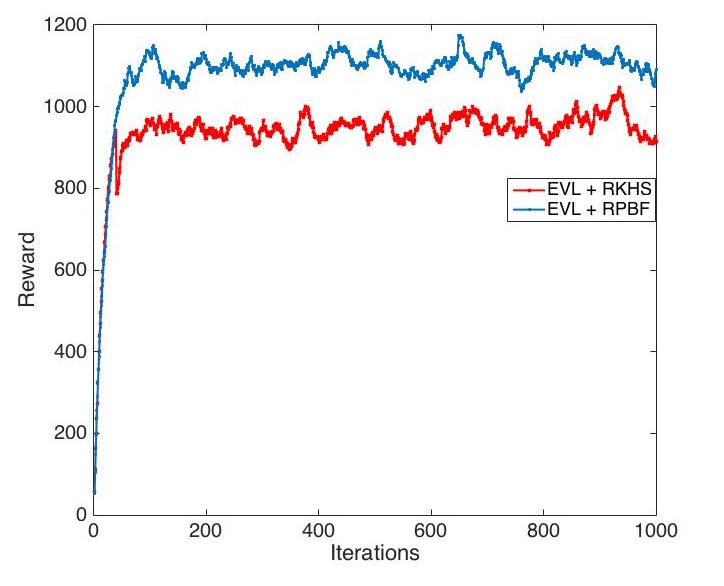}
	\caption{Performance on the Acrobot problem}
	\end{center}
		\label{fig:error_acro}
\end{figure}

\section{Conclusion}

\label{sec:conclusion}

In this paper, we have introduced universally applicable approximate
dynamic programming algorithms for continuous state space MDPs {with finite action spaces}. The
algorithms introduced are based on using randomization {to improve computational tractability and reduce}
 the `curse of dimensionality' via the synthesis of the `random function
approximation' and `empirical' approaches. Our first algorithm is
based on a random parametric function fitting by sampling parameters
in each iteration. The second is based on sampling states which then
yield a set of basis functions in an RKHS from the kernel. Both function
fitting steps involve convex optimization problems and can be implemented
with standard packages. Both algorithms can be viewed as iteration
of a type of random Bellman operator followed by a random projection
operator. 
{Iterated random operators in general are difficult to analyze.
Nevertheless, we can construct Markov chains that stochastically dominate the error sequences which
simplify the analysis \cite{haskell2016empirical}. In fact, the introduced method may be viewed as a `probabilistic contraction analysis' method in contrast to stochastic Lyapunov techniques and other methods for analyzing stochastic iterative algorithms.}
They yield convergence but also non-asymptotic sample complexity bounds. Numerical experiments on the cart-pole balancing and the Acrobat
problems suggests good performance in practice. More rigorous numerical
analysis will be conducted as part of future work.


\bibliographystyle{IEEEtran}
\bibliography{References-TAC17edp}

\appendix

\subsection{Supplement for Section \ref{sec:algos}}\label{app:A}

The following computation shows that $T$ maps bounded functions to
Lipschitz continuous functions when $Q$ and $c$ are both Lipschitz
continuous in the sense of (\ref{eq:smooth}) and (\ref{eq:smooth-1}).
Suppose $\|v\|_{\infty}\leq v_{\max}$, then $T\,v$ is Lipschitz
continuous with constant $L_{c}+\gamma\,v_{\max}L_{Q}$. We have 
\begin{align*}
 & |\left[T\,v\right]\left(s\right)-\left[T\,v\right]\left(s'\right)|\\
 & \leq\,\max_{a\in\mathbb{A}}|c\left(s,\,a\right)-c\left(s',\,a\right)|\\
 & +\gamma\,\max_{a\in\mathbb{A}}|\int v\left(y\right)Q\left(dy\,\vert\,s,\,a\right)-\int v\left(y\right)Q\left(dy\,\vert\,s',\,a\right)|\\
\leq\, & L_{c}\|s-s'\|_{2}+\gamma\,v_{\max}\max_{a\in\mathbb{A}}\int|Q\left(dy\,\vert\,s,\,a\right)-Q\left(dy\,\vert\,s',\,a\right)|\\
\leq\, & \left(L_{c}+\gamma\,v_{\max}L_{Q}\right)\|s-s'\|_{2}.
\end{align*}

\subsection{Supplement for Section \ref{sec:analysis}}\label{app:B}

First, we need to adapt \cite[Lemma 3]{munos2008finite} to obtain
point-wise error bounds on $v_{K}-v^{*}$ in terms of the errors $\left\{ \varepsilon_{k}\right\} _{k\geq0}$.
These bounds are especially useful when analyzing the performance
of EVL with respect to other norms besides the supremum norm, since
$T$ does not have a contractive property with respect to any other
norm.

For any $\pi\in\Pi$, we define the operator $Q^{\pi}\text{ : }\mathcal{F}\left(\mathbb{S}\right)\rightarrow\mathcal{F}\left(\mathbb{S}\right)$
(which gives the transition mapping as a function of $\pi$) via 
\[
\left(Q^{\pi}v\right)\left(s\right)\triangleq\int_{\mathbb{S}}v\left(y\right)Q\left(dy\,\vert\,s,\,\pi\left(s\right)\right),\,\forall s\in\mathbb{S}.
\]
Then we define the operator $T^{\pi}\text{ : }\mathcal{F}\left(\mathbb{S}\right)\rightarrow\mathcal{F}\left(\mathbb{S}\right)$
via 
\[
\left[T^{\pi}v\right]\left(s\right)\triangleq c\left(s,\,\pi\left(s\right)\right)+\gamma\,\int_{\mathbb{S}}v\left(x\right)Q\left(dx\,\vert\,s,\,\pi\left(s\right)\right),\,\forall s\in\mathbb{S}.
\]
For later use, we let $\pi^{*}\in\Pi$ be an optimal policy satisfying
\[
\pi^{*}\left(s\right)\in\arg\min_{a\in\mathbb{A}}\left\{ c\left(s,\,a\right)+\gamma\,\int_{\mathbb{S}}v^{*}\left(x\right)Q\left(dx\,\vert\,s,\,a\right)\right\} ,\,
\]
$\forall s\in\mathbb{S},$, i.e., it is greedy with respect to $v^{*}$.
More generally, a policy $\pi\in\Pi$ is greedy with respect to $v\in\mathcal{F}\left(\mathbb{S}\right)$
if $T^{\pi}v=T\,v$.

For use throughout this section, we let $\pi_{k}$ be a greedy policy
with respect to $v_{k}$ so that $T^{\pi_{k}}v_{k}=T\,v_{k}$ for
all $k\geq0$. Then, for fixed $K\geq1$ we define the operators 
\begin{align*}
A_{K}\triangleq\, & \frac{1}{2}\left[\left(Q^{\pi^{*}}\right)^{K}+Q^{\pi_{K-1}}Q^{\pi_{K-2}}\cdots Q^{\pi_{0}}\right],\\
A_{k}\triangleq\, & \frac{1}{2}\left[\left(Q^{\pi^{*}}\right)^{K-k-1}+Q^{\pi_{K-1}}Q^{\pi_{K-2}}\cdots Q^{\pi_{k+1}}\right],
\end{align*}
for $k=0,\ldots,\,K-1$, formed by composition of transition kernels.
We let $\vec{1}$ be the constant function equal to one on $\mathbb{S}$,
and we define the constant $\tilde{\gamma}=\frac{2\left(1-\gamma^{K+1}\right)}{1-\gamma}$
for use shortly. We note that $\left\{ A_{k}\right\} _{k=0}^{K}$
are all linear operators and $A_{k}\vec{1}=\vec{1}$ for all $k=0,\ldots,\,K$.
\begin{lemma} \label{lem:point-wise} For any $K\geq1$,

(i) $v_{K}-v^{*}\leq\sum_{k=0}^{K-1}\gamma^{K-k-1}\left(Q^{\pi^{*}}\right)^{K-k-1}\varepsilon_{k}+\gamma^{K}\left(Q^{\pi^{*}}\right)^{K}\left(v_{0}-v^{*}\right)$.

(ii) $v_{K}-v^{*}\geq\sum_{k=0}^{K-1}\gamma^{K-k-1}\left(Q^{\pi_{K-1}}Q^{\pi_{K-2}}\cdots Q^{\pi_{k+1}}\right)\varepsilon_{k}+\gamma^{K}\left(Q^{\pi_{K-1}}Q^{\pi_{K-2}}\cdots Q^{\pi_{0}}\right)\left(v_{0}-v^{*}\right).$

(iii) $|v_{K}-v^{*}|\leq2\left[\sum_{k=0}^{K-1}\gamma^{K-k-1}A_{k}\,|\varepsilon_{k}|+\gamma^{K}A_{K}\left(2\,v_{\max}\right)\right]$.
\end{lemma}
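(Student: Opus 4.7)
The plan is to derive $(i)$ and $(ii)$ from two complementary one-step inequalities relating $v_{k+1}-v^*$ to a positive linear operator applied to $v_k-v^*$ plus the residual $\varepsilon_k$, and then combine them to obtain $(iii)$. Starting from (\ref{eq:EVL_error}), write $v_{k+1}-v^* = Tv_k - Tv^* + \varepsilon_k$. Since $T$ takes a minimum over actions, $Tv_k \leq T^{\pi^*}v_k$ for the fixed policy $\pi^*$, while $Tv^* = T^{\pi^*}v^*$ by optimality of $\pi^*$; subtracting yields the upper bound $v_{k+1}-v^* \leq \gamma Q^{\pi^*}(v_k - v^*) + \varepsilon_k$. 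Dually, using that $\pi_k$ is greedy for $v_k$ (so $Tv_k = T^{\pi_k}v_k$) while $Tv^* \leq T^{\pi_k}v^*$, one obtains $v_{k+1}-v^* \geq \gamma Q^{\pi_k}(v_k - v^*) + \varepsilon_k$.

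For $(i)$, I would iterate the upper bound $K$ times. Since the operator at every step is the same $\gamma Q^{\pi^*}$, the composition telescopes cleanly: the initial gap contributes $\gamma^K(Q^{\pi^*})^K(v_0-v^*)$ and the residual at step $k$ is transported by $\gamma^{K-k-1}(Q^{\pi^*})^{K-k-1}$. For $(ii)$, iterate the lower bound; here the operator at step $k+1$ is $\gamma Q^{\pi_k}$, so the composition along the backward trajectory produces the product $Q^{\pi_{K-1}}Q^{\pi_{K-2}}\cdots Q^{\pi_0}$ acting on $v_0-v^*$ and the product $Q^{\pi_{K-1}}\cdots Q^{\pi_{k+1}}$ (empty, hence the identity, when $k=K-1$) acting on $\varepsilon_k$. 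Both inductions are routine once the one-step bounds are in hand.

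For $(iii)$, note that $|v_K-v^*|=\max\{v_K-v^*,\,v^*-v_K\}$ pointwise. Using positivity of the kernel operators $Q^{\pi}$, upgrade $(i)$ to an upper bound on $v_K-v^*$ with $|\varepsilon_k|$ and $|v_0-v^*|$ replacing $\varepsilon_k$ and $v_0-v^*$, and symmetrically upgrade $(ii)$ (after flipping signs) to an upper bound on $v^*-v_K$. Since the pointwise max of two non-negative functions is bounded by their sum, add the two upgraded bounds; at each time index the twin operator strings collapse into $2A_k$ and $2A_K$ by the definitions given in the excerpt. Finally bound $|v_0-v^*|\leq 2v_{\max}$ pointwise and use $A_K\vec{1}=\vec{1}$ to pass the constant through the operator, producing the stated form.

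The main, and essentially only, obstacle I foresee is bookkeeping: keeping the direction of inequalities straight under the cost-minimization convention (where $Tv\leq T^{\pi}v$ for every $\pi$), and indexing the telescoping products carefully enough that the empty product at $k=K-1$ in the lower bound for $\varepsilon_{K-1}$ comes out correctly. Once the two one-step inequalities are in place, $(i)$ and $(ii)$ are pure induction, and $(iii)$ is an algebraic combination relying only on positivity and linearity of the $Q^{\pi}$.
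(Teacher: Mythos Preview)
Your proposal is correct and follows essentially the same route as the paper: the identical one-step inequalities $v_{k+1}-v^*\leq\gamma Q^{\pi^*}(v_k-v^*)+\varepsilon_k$ and $v_{k+1}-v^*\geq\gamma Q^{\pi_k}(v_k-v^*)+\varepsilon_k$ are derived from $Tv_k\leq T^{\pi^*}v_k$, $Tv^*=T^{\pi^*}v^*$, $Tv_k=T^{\pi_k}v_k$, $Tv^*\leq T^{\pi_k}v^*$, and then iterated. For (iii) the paper invokes the elementary fact that $f\leq g\leq h$ implies $|g|\leq|f|+|h|$, which is exactly your ``max bounded by sum'' step combined with the positivity upgrades you describe; the remaining use of $|v_0-v^*|\leq 2v_{\max}$ is identical.
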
 \begin{proof} (i) For any $k\geq1$, we have $T\,v_{k}\leq T^{\pi^{*}}v_{k}$
and $T^{\pi^{*}}v_{k}-T^{\pi^{*}}v^{*}=\gamma\,Q^{\pi^{*}}\left(v_{k}-v^{*}\right)$,
so $v_{k+1}-v^{*}=$ 
\[
T\,v_{k}+\varepsilon_{k}-T^{\pi^{*}}v_{k}+T^{\pi^{*}}v_{k}-T^{\pi^{*}}v^{*}\leq\gamma\,Q^{\pi^{*}}\left(v_{k}-v^{*}\right)+\varepsilon_{k}.
\]
The result then follows by induction.

(ii) Similarly, for any $k\geq1$, we have $T\,v^{*}\leq T^{\pi_{k}}v^{*}$
and $T\,v_{k}-T^{\pi_{k}}v^{*}=T^{\pi_{k}}v_{k}-T^{\pi_{k}}v^{*}=\gamma\,Q^{\pi_{k}}\left(v_{k}-v^{*}\right)$,
so $v_{k+1}-v^{*}=$ 
\[
T\,v_{k}+\varepsilon_{k}-T^{\pi_{k}}v^{*}+T^{\pi_{k}}v^{*}-T\,v^{*}\geq\gamma\,Q^{\pi_{k}}\left(v_{k}-v^{*}\right)+\varepsilon_{k}.
\]
Again, the result follows by induction.

(iii) If $f\leq g\leq h$ in $\mathcal{F}\left(\mathbb{S}\right)$,
then $|g|\leq|f|+|h|$, so combining parts (i) and (ii) gives

\[
|v_{K}-v^{*}|\leq2\sum_{k=0}^{K-1}\gamma^{K-k-1}A_{k}|\varepsilon_{k}|+2\,\gamma^{K}A_{K}|v_{0}-v^{*}|.
\]
Then we note that $|v_{0}-v^{*}|\leq2\,v_{\max}.$ \end{proof} Now
we use Lemma \ref{lem:point-wise} to derive $p-$norm bounds. 
\begin{proof}{(}of Lemma \ref{lem:error_prop}{)} Using $\sum_{k=0}^{K}\gamma^{k}=\left(1-\gamma^{K+1}\right)/\left(1-\gamma\right)$,
we define the constants 
\begin{align*}
\alpha_{k}=\, & \frac{\left(1-\gamma\right)\gamma^{K-k-1}}{1-\gamma^{K+1}},\,\forall k=0,\ldots,\,K-1,\\
\alpha_{K}=\, & \frac{\left(1-\gamma\right)\gamma^{K}}{1-\gamma^{K+1}},
\end{align*}
and we note that $\sum_{k=0}^{K}\alpha_{k}=1$. Then, we obtain $|v_{K}-v^{*}|$
\[
\leq\tilde{\gamma}\left[\sum_{k=0}^{K-1}\alpha_{k}A_{k}\,|\varepsilon_{k}|+\alpha_{K}A_{K}\left(2\,v_{\max}\right)\right]
\]
from Lemma \ref{lem:point-wise}(iii). Next, we compute $\|v_{K}-v^{*}\|_{p,\,\rho}^{p}=\,$
\begin{align*}
 & \int_{\mathbb{S}}|v_{K}\left(s\right)-v^{*}\left(s\right)|^{p}\rho\left(ds\right)\\
\leq\, & \tilde{\gamma}^{p}\int_{\mathbb{S}}\left[\sum_{k=0}^{K-1}\alpha_{k}A_{k}\,|\varepsilon_{k}|+\alpha_{K}A_{K}\left(2\,v_{\max}\right)\vec{1}\right]^{p}\left(s\right)\rho\left(ds\right)\\
\leq\, & \tilde{\gamma}^{p}\int_{\mathbb{S}}\left[\sum_{k=0}^{K-1}\alpha_{k}A_{k}\,|\varepsilon_{k}|^{p}+\alpha_{K}A_{K}\left(2\,v_{\max}\right)^{p}\vec{1}\right]\left(s\right)\rho\left(ds\right),
\end{align*}
using Jensen's inequality and convexity of $x\rightarrow|x|^{p}$.
Now, we have $\rho\,A_{k}\leq c_{\rho,\,\mu}\left(K-k-1\right)\mu$
for $k=0,\ldots,\,K-1$ by Assumption \ref{assu:Kernel}(ii) and so
for all $k=0,\ldots,\,K-1$, 
\[
\int_{\mathbb{S}}\left[A_{k}\,|\varepsilon_{k}|^{p}\right]\left(s\right)\rho\left(ds\right)\leq c_{\rho,\,\mu}\left(K-k-1\right)\|\varepsilon_{k}\|_{p,\,\mu}^{p}.
\]
We arrive at $\|v_{K}-v^{*}\|_{p,\,\rho}^{p}$ 
\begin{align*}
 & \leq\,\tilde{\gamma}^{p}\left[\sum_{k=0}^{K-1}\alpha_{k}c_{\rho,\,\mu}\left(K-k-1\right)\|\varepsilon_{k}\|_{p,\,\mu}^{p}+\alpha_{K}\left(2\,v_{\max}\right)^{p}\right]\\
=\, & 2^{p}\tilde{\gamma}^{p-1}\bigg[\sum_{k=0}^{K-1}\gamma^{K-k-1}c_{\rho,\,\mu}\left(K-k-1\right)\|\varepsilon_{k}\|_{p,\,\mu}^{p}\\
&~~~~~~~~~~~ +\gamma^{K}\left(2\,v_{\max}\right)^{p}\bigg],
\end{align*}
where we use $|v_{0}-v^{*}|^{p}\leq\left(2\,v_{\max}\right)^{p}$.
Now, by subadditivity of $x\rightarrow|x|^{t}$ for $t=1/p\in(0,\,1]$
with $p\in[1,\,\infty)$, assumption that $\|\varepsilon_{k}\|_{p,\,\mu}\leq\varepsilon$
for all $k=0,\,1,\ldots,\,K-1$, and since $\sum_{k=0}^{K-1}\gamma^{K-k-1}c_{\rho,\,\mu}\left(K-k-1\right)\leq C_{\rho,\,\mu}$
by Assumption \ref{assu:Kernel}(ii), we see 
\[
\|v_{K}-v^{*}\|_{p,\,\rho}\leq2\left(\frac{1-\gamma^{K+1}}{1-\gamma}\right)^{\frac{p-1}{p}}\left[C_{\rho,\,\mu}^{1/p}\varepsilon+\gamma^{K/p}\left(2\,v_{\max}\right)\right],
\]
which gives the desired result. \end{proof} Supremum norm error bounds
follow more easily from Lemma \ref{lem:point-wise}. \begin{proof}{(}Proof
of Lemma \ref{lem:Error_supremum}{)} We have 
\[
\|v_{K}-v^{*}\|_{\infty}\leq\,\max\{\|\sum_{k=0}^{K-1}\gamma^{K-k-1}\left(Q^{\pi^{*}}\right)^{K-k-1}\varepsilon_{k}
\]
\[
+\gamma^{K}\left(Q^{\pi^{*}}\right)^{K}\left(v_{0}-v^{*}\right)\|_{\infty},
\]
\[
\left.\|\sum_{k=0}^{K-1}\gamma^{K-k-1}\left(Q^{\pi_{K-1}}Q^{\pi_{K-2}}\cdots Q^{\pi_{K+1}}\right)\varepsilon_{k}\right.
\]
\[
+\gamma^{K}\left(Q^{\pi_{K-1}}Q^{\pi_{K-2}}\cdots Q^{\pi_{0}}\right)\left(v_{0}-v^{*}\right)\|_{\infty}\},
\]
by Lemma \ref{lem:point-wise}. Now, 
\[
\|\sum_{k=0}^{K-1}\gamma^{K-k-1}\left(Q^{\pi^{*}}\right)^{K-k-1}\varepsilon_{k}
\]
\[
+\gamma^{K}\left(Q^{\pi^{*}}\right)^{K}\left(v_{0}-v^{*}\right)\|_{\infty}
\]
\[
\leq\sum_{k=0}^{K-1}\gamma^{K-k-1}\|\varepsilon_{k}\|_{\infty}+\gamma^{K}\|v_{0}-v^{*}\|_{\infty}
\]
and 
\[
\|\sum_{k=0}^{K-1}\gamma^{K-k-1}\left(Q^{\pi_{K-1}}Q^{\pi_{K-2}}\cdots Q^{\pi_{K+1}}\right)\varepsilon_{k}
\]
\[
+\gamma^{K}\left(Q^{\pi_{K-1}}Q^{\pi_{K-2}}\cdots Q^{\pi_{0}}\right)\left(v_{0}-v^{*}\right)\|_{\infty}
\]
\[
\leq\,\sum_{k=0}^{K-1}\gamma^{K-k-1}\|\varepsilon_{k}\|_{\infty}+\gamma^{K}\|v_{0}-v^{*}\|_{\infty},
\]
where we use the triangle inequality, the fact that $|\left(Q\,f\right)\left(s\right)|\leq\int_{\mathbb{S}}|f\left(y\right)|Q\left(dy\,\vert\,s\right)\leq\|f\|_{\infty}$
for any transition kernel $Q$ on $\mathbb{S}$ and $f\in\mathcal{F}\left(\mathbb{S}\right)$,
and $|v_{0}-v^{*}|\leq2\,v_{\max}$. For any $K\geq1$, 
\begin{equation}
\|v_{K}-v^{*}\|_{\infty}\leq\sum_{k=0}^{K-1}\gamma^{K-k-1}\|\varepsilon_{k}\|_{\infty}+\gamma^{K}\left(2\,v_{\max}\right).\label{eq:Basic_supremum}
\end{equation}
Follows immediately since $\sum_{k=0}^{K-1}\gamma^{K-k-1}\varepsilon\leq\varepsilon/\left(1-\gamma\right)$
for all $K\geq1$. \end{proof} We emphasize that Lemma \ref{lem:Error_supremum}
does not require any assumptions on the transition probabilities,
in contrast to Lemma \ref{lem:error_prop} which requires Assumption
\ref{assu:Kernel}(ii).

\subsection{Supplement for Section \ref{sec:analysis}: Function approximation}\label{app:C}

We record several pertinent results here on the type of function reconstruction
used in our EVL algorithms. The first lemma is illustrative of approximation
results in Hilbert spaces, it gives an $O\left(1/\sqrt{J}\right)$
convergence rate on the error from using $\widehat{\mathcal{F}}\left(\theta^{1:J}\right)$
compared to $\mathcal{F}\left(\Theta\right)$ in $\mathcal{L}_{2,\,\mu}\left(\mathbb{S}\right)$
in probability. \begin{lemma} \cite[Lemma 1]{rahimi2009weighted}
Fix $f^{*}\in\mathcal{F}\left(\Theta\right)$, for any $\delta\in\left(0,\,1\right)$
there exists a function $\hat{f}\in\widehat{\mathcal{F}}\left(\theta^{1:J}\right)$
such that
\[
\|f^{*}-\hat{f}\|_{2,\,\mu}\leq\frac{C}{\sqrt{J}}\left(1+\sqrt{2\,\log\frac{1}{\delta}}\right)
\]
with probability at least $1-\delta$. \end{lemma}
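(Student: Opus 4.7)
The plan is to follow the classic Monte Carlo / bounded differences argument that underlies Rahimi--Recht's random features analysis. First I would rewrite the target $f^{*}\in\mathcal{F}(\Theta)$ as an expectation under the sampling distribution $\nu$: writing $\beta(\theta):=\alpha(\theta)/\nu(\theta)$, the assumption $|\alpha(\theta)|\leq C\nu(\theta)$ gives $\|\beta\|_{\infty}\leq C$, and
\[
f^{*}(s) \;=\; \int_{\Theta}\phi(s;\theta)\alpha(\theta)\,d\theta \;=\; \mathbb{E}_{\theta\sim\nu}\bigl[\beta(\theta)\phi(s;\theta)\bigr].
\]
This turns the integral representation into a mean, opening the door to Monte Carlo approximation.

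Next I would define the natural empirical approximant
\[
\hat{f}(s) \;:=\; \frac{1}{J}\sum_{j=1}^{J}\beta(\theta_{j})\phi(s;\theta_{j}), \qquad \theta_{j}\stackrel{\text{iid}}{\sim}\nu,
\]
and verify immediately that this is an element of $\widehat{\mathcal{F}}(\theta^{1:J})$: take $\alpha_{j}=\beta(\theta_{j})/J$, so $\|(\alpha_{1},\ldots,\alpha_{J})\|_{\infty}\leq C/J$, as required. By construction $\mathbb{E}[\hat{f}(s)]=f^{*}(s)$ pointwise.

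The heart of the argument is a two-step concentration bound on $\|f^{*}-\hat{f}\|_{2,\mu}$. Step one controls the mean: using independence across $j$, the identity $\mathbb{E}[\hat f]=f^{*}$, $|\beta|\leq C$, $|\phi|\leq 1$, and $\mu\in\mathcal{M}(\mathbb{S})$,
\[
\mathbb{E}\bigl[\|f^{*}-\hat{f}\|_{2,\mu}^{2}\bigr] \;=\; \frac{1}{J^{2}}\sum_{j=1}^{J}\int_{\mathbb{S}}\mathrm{Var}_{\theta_{j}}\!\bigl(\beta(\theta_{j})\phi(s;\theta_{j})\bigr)\mu(ds) \;\leq\; \frac{C^{2}}{J},
\]
so by Jensen's inequality $\mathbb{E}\|f^{*}-\hat{f}\|_{2,\mu}\leq C/\sqrt{J}$. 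Step two is a McDiarmid bounded-differences argument on the function $F(\theta_{1},\ldots,\theta_{J}):=\|f^{*}-\hat{f}\|_{2,\mu}$. Replacing $\theta_{j}$ by $\theta_{j}'$ changes $\hat{f}$ by $J^{-1}\bigl[\beta(\theta_{j}')\phi(\cdot;\theta_{j}')-\beta(\theta_{j})\phi(\cdot;\theta_{j})\bigr]$, whose $\mathcal{L}_{2,\mu}$-norm is bounded by $2C/J$ because $|\beta\phi|\leq C$ and $\mu$ is a probability measure. By the triangle inequality $F$ has bounded differences $2C/J$, so
\[
\Pr\!\left\{F-\mathbb{E}F \geq t\right\} \;\leq\; \exp\!\Bigl(-\frac{2t^{2}}{J\cdot(2C/J)^{2}}\Bigr) \;=\; \exp\!\Bigl(-\frac{Jt^{2}}{2C^{2}}\Bigr).
\]
Setting the right-hand side equal to $\delta$ gives $t=C\sqrt{2\log(1/\delta)/J}$, and combining with $\mathbb{E}F\leq C/\sqrt{J}$ yields the stated bound.

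The only genuinely delicate step is verifying the bounded-differences constant for $F$; everything else is bookkeeping. The key subtlety is that one must bound the perturbation in the $\mathcal{L}_{2,\mu}$-norm (not pointwise), which is where $\mu$ being a probability measure together with $|\phi|\leq 1$ is essential—without normalization of $\mu$ or boundedness of $\phi$ we would lose the clean $2C/J$ constant and the $\sqrt{2\log(1/\delta)}$ factor in the conclusion.
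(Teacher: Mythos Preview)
Your proposal is correct and is precisely the argument from \cite[Lemma 1]{rahimi2009weighted}; the paper itself does not give a proof but simply cites that reference. The Monte Carlo representation via $\beta=\alpha/\nu$, the variance bound yielding $\mathbb{E}\|f^{*}-\hat f\|_{2,\mu}\leq C/\sqrt{J}$, and the McDiarmid step with difference constant $2C/J$ are exactly the ingredients Rahimi and Recht use, so there is nothing to compare.
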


The next result is an easy consequence of \cite[Lemma 1]{rahimi2009weighted}
and bounds the error from using $\widehat{\mathcal{F}}\left(\theta^{1:J}\right)$
compared to $\mathcal{F}\left(\Theta\right)$ in $\mathcal{L}_{1,\,\mu}\left(\mathbb{S}\right)$.
\begin{lemma} \label{lem:Approximation_L1} Fix $f^{*}\in\mathcal{F}\left(\Theta\right)$,
for any $\delta\in\left(0,\,1\right)$ there exists a function $\hat{f}\in\widehat{\mathcal{F}}\left(\theta^{1:J}\right)$
such that
\[
\|\hat{f}-f^{*}\|_{1,\,\mu}\leq\frac{C}{\sqrt{J}}\left(1+\sqrt{2\,\log\frac{1}{\delta}}\right)
\]
with probability at least $1-\delta$. \end{lemma}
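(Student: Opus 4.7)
The plan is to derive this lemma as an immediate corollary of the preceding $\mathcal{L}_2$ approximation result (the cited Rahimi--Recht lemma), rather than redoing the concentration argument from scratch. The key observation is that $\mu \in \mathcal{M}(\mathbb{S})$ is a probability measure, so for any measurable $g : \mathbb{S} \to \mathbb{R}$, Jensen's inequality applied to the convex function $x \mapsto x^2$ (equivalently, Cauchy--Schwarz against the constant function $\mathbf{1}$) gives
\[
\|g\|_{1,\mu} = \int_{\mathbb{S}} |g(s)|\,\mu(ds) \leq \left(\int_{\mathbb{S}} |g(s)|^2\,\mu(ds)\right)^{1/2} = \|g\|_{2,\mu}.
\]

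First I would fix $f^* \in \mathcal{F}(\Theta)$ and $\delta \in (0,1)$, and invoke the preceding lemma to obtain a function $\hat{f} \in \widehat{\mathcal{F}}(\theta^{1:J})$ satisfying
\[
\|\hat{f} - f^*\|_{2,\mu} \leq \frac{C}{\sqrt{J}}\left(1 + \sqrt{2\log\frac{1}{\delta}}\right)
\]
with probability at least $1 - \delta$ over the random draw $\theta^{1:J} \sim \nu^J$. Then I would apply the Jensen/Cauchy--Schwarz inequality above with $g = \hat{f} - f^*$ to upgrade this into a bound on $\|\hat{f} - f^*\|_{1,\mu}$ on the same high-probability event. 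Chaining the two inequalities yields the claim with the same sample $\hat{f}$ and the same exceptional set of probability at most $\delta$, so no additional union bound or concentration argument is needed.

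There is essentially no obstacle here; the only thing to be careful about is noting explicitly that $\mu$ being a probability measure on the compact state space $\mathbb{S}$ is what makes the $\mathcal{L}_1$-norm dominated by the $\mathcal{L}_2$-norm (this would fail for a general $\sigma$-finite measure). Since the paper has already assumed $\mu \in \mathcal{M}(\mathbb{S})$, this hypothesis is free, and the proof is a two-line deduction.
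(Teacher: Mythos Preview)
Your proposal is correct and matches the paper's own proof essentially line for line: the paper also observes via Jensen's inequality that $\|f-g\|_{1,\mu}\leq\|f-g\|_{2,\mu}$ for a probability measure $\mu$, and then invokes the $\mathcal{L}_{2}$ Rahimi--Recht lemma directly. No further argument is needed.
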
 \begin{proof}
Choose $f,\,g\in\mathcal{F}\left(\mathbb{S}\right)$, then by Jensen's
inequality we have $\|f-g\|_{1,\,\mu}=\mathbb{E}_{\mu}\left[|f\left(S\right)-g\left(S\right)|\right]$
\[
=\mathbb{E}_{\mu}\left[\left(\left(f\left(S\right)-g\left(S\right)\right)^{2}\right)^{1/2}\right]\leq\sqrt{\mathbb{E}_{\mu}\left[\left(f\left(S\right)-g\left(S\right)\right)^{2}\right]}.
\]
The desired result then follows by \cite[Lemma 1]{rahimi2009weighted}.
\end{proof}

Now we consider function approximation in the supremum norm. Recall
the definition of the regression function 
\[
f_{M}\left(s\right)=\mathbb{E}\left[\min_{a\in\mathbb{A}}\left\{ c\left(s,\,a\right)+\frac{\gamma}{M}\sum_{m=1}^{M}v\left(X_{m}^{s,\,a}\right)\right\} \right],
\]
$\forall s\in\mathbb{S}$. Then we have the following approximation
result, for which we recall the constant $\kappa:=\sup_{s\in\mathbb{S}}\sqrt{K\left(s,\,s\right)}$.
\begin{corollary} \cite[Corollary 5]{smale2005shannon} For any $\delta\in\left(0,\,1\right)$,
\[
\|f_{z,\,\lambda}-f_{M}\|_{\mathcal{H}_{K}}
\]
\[
\leq\tilde{C}\,\kappa\left(\frac{\log\left(4/\delta\right)^{2}}{N}\right)^{1/6}\text{ for }\lambda=\left(\frac{\log\left(4/\delta\right)^{2}}{N}\right)^{1/3},
\]
with probability at least $1-\delta$. \end{corollary}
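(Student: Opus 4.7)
\textbf{Proof plan for Theorem \ref{thm:RKHS_Linfty}.} The plan is to combine the deterministic error-propagation bound of Lemma \ref{lem:Error_supremum} with the random-operator machinery of Corollary \ref{cor:Random_supremum}, and then discharge the per-iteration probabilistic hypothesis using the one-step RKHS fitting guarantee (Theorem \ref{thm:Supremum_Bellman}, which ultimately rests on the Smale--Zhou regression rate under Assumption \ref{assu:RKHS}). The target inequality $\|v_K - v^*\|_\infty \leq \varepsilon$ will be split into two halves of size $\varepsilon/2$: one absorbed by the contraction term and one by the residual term.

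First I would use Lemma \ref{lem:Error_supremum} as the bookkeeping device. If we can guarantee that each per-iteration residual satisfies $\|\varepsilon_k\|_\infty \leq \varepsilon'$ for all $k=0,\ldots,K-1$, then $\|v_K-v^*\|_\infty \leq \varepsilon'/(1-\gamma) + \gamma^K(2 v_{\max})$. To make the second term at most $\varepsilon/2$, I pick the smallest $K^*$ satisfying $\gamma^{K^*}(2 v_{\max}) \leq \varepsilon/2$; this is exactly the definition of $K_\infty^*$ in the statement. To make the first term at most $\varepsilon/2$, I pick $\varepsilon' := \varepsilon(1-\gamma)/2$. Thus the deterministic shape of the bound is already arranged; what remains is to replace the deterministic hypothesis $\|\varepsilon_k\|_\infty \leq \varepsilon'$ by its probabilistic counterpart.

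Next I would invoke the stochastic-dominance framework. Corollary \ref{cor:Random_supremum} says that if each iteration is ``good'' (i.e., $\|\widehat{T} v_k - T v_k\|_\infty \leq \varepsilon'$) with probability at least $q$, and if $q \geq (1/2+\delta/2)^{1/(K^*-1)}$, then after $K \geq \log(4/((1/2-\delta/2)(1-q)q^{K^*-1}))$ iterations the deterministic bound above holds with probability at least $\delta$ in the original parametrization (our $1-\delta$ here). Concretely I set $\delta' := 1 - (1-\delta/2)^{1/(K_\infty^*-1)}$, so that a per-step failure probability of $\delta'$ yields $q = 1-\delta'$ meeting the Corollary's hypothesis, and the stated lower bound on $K$ is precisely the mixing-time requirement produced by the dominating Markov chain of Theorem \ref{thm:Random}.

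Finally I would verify Assumption \ref{assu:Random} for the RKHS algorithm at level $(\varepsilon', \delta')$ by calling Theorem \ref{thm:Supremum_Bellman} (the one-step sup-norm fitting result for EVL+RKHS). That theorem, combined with Assumption \ref{assu:RKHS} and the Smale--Zhou regression rate, translates the request ``$\Pr\{\|\widehat{T}v - Tv\|_\infty \leq \varepsilon'\} \geq 1-\delta'$'' into explicit sample-size conditions on the number of state samples $N$ and the number of next-state samples $M$. Matching the two terms in the one-step bound separately to $\varepsilon'/2$ yields the sample sizes $N_\infty(\varepsilon,\delta')$ and $M_\infty(\varepsilon)$ as stated; in particular, the $N^{-1/6}$ rate from Smale--Zhou forces the sixth power in $N_\infty$, and the Hoeffding-type argument over the finite action set produces the $|\mathbb{A}|$ factor inside the logarithm of $M_\infty$.

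The step I expect to be the main obstacle is the translation from the RKHS-norm regression bound (where Smale--Zhou lives) to the sup-norm bound that Lemma \ref{lem:Error_supremum} requires; one must pay the $\kappa$-factor via $\|f\|_\infty \leq \kappa\|f\|_{\mathcal{H}_K}$, control the extra bias coming from the $\min_a$ over the empirical Bellman target (so that $f_M$ is close to $Tv$ and not merely close to its own regression function), and make these two pieces fit the apportionment $\varepsilon'/2 + \varepsilon'/2$ that feeds into Corollary \ref{cor:Random_supremum}; once that is done, the rest is assembly.
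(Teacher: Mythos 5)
Your proposal is a proof plan for Theorem \ref{thm:RKHS_Linfty}, but the statement you were asked to prove is not that theorem: it is the one-step regression corollary attributed to \cite[Corollary 5]{smale2005shannon}, which bounds $\|f_{z,\lambda}-f_M\|_{\mathcal{H}_K}$ by $\tilde{C}\,\kappa\,(\log(4/\delta)^2/N)^{1/6}$ for the regularization choice $\lambda=(\log(4/\delta)^2/N)^{1/3}$. Nothing in your plan establishes (or even directly addresses) this bound. You treat it as a black box hiding inside Theorem \ref{thm:Supremum_Bellman} --- you refer to ``the Smale--Zhou regression rate'' and to ``the $N^{-1/6}$ rate from Smale--Zhou'' as given facts that you will consume --- which is exactly the opposite of proving the corollary. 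The entire downstream assembly you describe (Lemma \ref{lem:Error_supremum}, the dominating Markov chain of Corollary \ref{cor:Random_supremum}, the choice of $K_\infty^*$, $\delta'$, $N_\infty$, $M_\infty$) is the proof of a different result that sits two levels above the one in question.

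For the record, the paper's own proof of this corollary is short: the RKHS-norm bound itself is imported verbatim from the cited reference, and the only thing actually argued is the embedding $\|f\|_\infty \le \kappa\,\|f\|_{\mathcal{H}_K}$, obtained by writing $f(s)=\langle K(s,\cdot),f\rangle_{\mathcal{H}_K}$ via the reproducing property, applying Cauchy--Schwarz to get $|f(s)|\le \|K(s,\cdot)\|_{\mathcal{H}_K}\|f\|_{\mathcal{H}_K}=\sqrt{K(s,s)}\,\|f\|_{\mathcal{H}_K}$, and taking the supremum over $s$ using boundedness of the kernel. You do mention this $\kappa$-factor in your final paragraph as ``the step I expect to be the main obstacle,'' but you neither carry it out nor recognize that it, together with the citation, \emph{is} the proof of the statement at hand. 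So the gap is not a technical error within your argument; it is that your argument is aimed at the wrong target.
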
 \begin{proof}
Uses the fact that for any $f\in\mathcal{H}_{K}$, $\|f\|_{\infty}\leq\kappa\,\|f\|_{\mathcal{H}_{K}}$.
For any $s\in\mathbb{S}$, we have $|f\left(s\right)|=|\langle K\left(s,\,\cdot\right),\,f\left(\cdot\right)\rangle_{\mathcal{H}_{K}}|$
and subsequently 
\begin{align*}
|\langle K\left(s,\,\cdot\right),\,f\left(\cdot\right)\rangle_{\mathcal{H}_{K}}|\leq\, & \|K\left(s,\,\cdot\right)\|_{\mathcal{H}_{K}}\|f\|_{\mathcal{H}_{K}}\\
=\, & \sqrt{\langle K\left(s,\,\cdot\right),\,K\left(s,\,\cdot\right)\rangle{}_{\mathcal{H}_{K}}}\|f\|_{\mathcal{H}_{K}}\\
=\, & \sqrt{K\left(s,\,s\right)}\|f\|_{\mathcal{H}_{K}}\\
\leq\, & \sup_{s\in\mathbb{S}}\sqrt{K\left(s,\,s\right)}\|f\|_{\mathcal{H}_{K}},
\end{align*}
where the first inequality is by Cauchy-Schwartz and the second is
by assumption that $K$ is a bounded kernel. \end{proof} The preceding
result is about the error when approximating the regression function
$f_{M}$, but $f_{M}$ generally is not equal to $T\,v$. We bound
the error between $f_{M}$ and $T\,v$ as well in the next subsection.

\begin{proof} (Theorem \ref{thm:Random})
	First we note that, by \cite[Lemma A.1]{haskell2016empirical}, $\left[Y_{k+1}\,\vert\,Y_{k}=\eta\right]$
	is stochastically increasing in $\eta$ for all $k\geq0$, i.e. $\left[Y_{k+1}\,\vert\,Y_{k}=\eta\right]\leq_{st}\left[Y_{k+1}\,\vert\,Y_{k}=\eta'\right]$
	for all $\eta\leq\eta'$. Then, by \cite[Lemma A.2]{haskell2016empirical},
	$\left[X_{k+1}\,\vert\,X_{k}=\eta,\,\mathcal{F}_{k}\right]\leq_{st}\left[Y_{k+1}\,\vert\,Y_{k}=\eta\right]$
	for all $\eta\in f$ and $\mathcal{F}_{k}$ for all $k\geq0$.
	
	(i) Trivially, $X_{0}\leq_{st}Y_{0}$ since $X_{0}\leq_{as}Y_{0}$.
	Next, we see that $X_{1}\leq_{st}Y_{1}$ by \cite[Lemma A.1]{haskell2016empirical}.
	We prove the general case by induction. Suppose $X_{k}\leq_{st}Y_{k}$
	for $k\geq1$, and for this proof define the random variable 
	\[
	\mathcal{Y}\left(\theta\right)=\begin{cases}
	\max\left\{ \theta-1,\,1\right\} , & \mbox{w.p. }q,\\
	K^{*}, & \mbox{w.p. }1-q.
	\end{cases}
	\]
	to be the conditional distribution of $Y_{k}$ conditional on $\theta$,
	as a function of $\theta$. We see that $Y_{k+1}$ has the same distribution
	as $\left[\mathcal{Y}\left(\theta\right)\,\vert\,\theta=Y_{k}\right]$
	by definition. Since $\mathcal{Y}\left(\theta\right)$ are stochastically
	increasing by Lemma \cite[Lemma A.1]{haskell2016empirical}, we see that
	$\left[\mathcal{Y}\left(\theta\right)\,\vert\,\theta=Y_{k}\right]\geq_{st}\,\left[\mathcal{Y}\left(\theta\right)\,\vert\,\theta=X_{k}\right]$
	by \cite[Theorem 1.A.6]{Shaked2007} and our induction hypothesis.
	Now, $\left[\mathcal{Y}\left(\theta\right)\,\vert\,\theta=X_{k}\right]\geq_{st}\left[X_{k+1}\,\vert\,X_{k},\,\mathcal{F}_{k}\right]$
	by \cite[Theorem 1.A.3(d)]{Shaked2007} and Lemma \cite[Lemma A.2]{haskell2016empirical}
	for all histories $\mathcal{F}_{k}$. It follows that $Y_{k+1}\geq_{st}X_{k+1}$
	by transitivity of $\geq_{st}$.
	
	(ii) Follows from part (i) by the definition of $\leq_{st}$. 
\end{proof}

\begin{proof} (Corollary \ref{cor:Random_Lp}) Since $(Y_{k})_{k\geq0}$ is an irreducible Markov
	chain on a finite state space, its steady state distribution $\mu=(\mu\left(i\right))_{i=1}^{K^{*}}$
	on $\mathcal{K}$ exists. By \cite[Lemma 4.3]{haskell2016empirical},
	the steady state distribution of $(Y_{k})_{k\geq0}$ is $\mu=(\mu\left(i\right))_{i=1}^{K^{*}}$
	given by: 
	\begin{align*}
	\mu\left(1\right)=\, & q^{K^{*}-1}\\
	\mu\left(i\right)=\, & \left(1-q\right)q^{K^{*}-i}, & \forall i=2,\ldots,K^{*}-1,\\
	\mu\left(K^{*}\right)=\, & 1-q.
	\end{align*}
	The constant 
	\[
	\mu_{\min}\left(q;\,K^{*}\right):=\min\left\{ q^{K^{*}-1},\,\left(1-q\right)q^{\left(K^{*}-2\right)},\,\left(1-q\right)\right\} ,
	\]
	for all $q\in\left(0,\,1\right)$ and $K^{*}\geq1$, which is the
	minimum of the steady state probabilities appears shortly in the Markov
	chain mixing time bound for $(Y_{k})_{k\geq0}$. We note that $\mu^{*}\left(q;\,K^{*}\right)=\left(1-q\right)q^{K^{*}-1}\leq\mu_{\min}\left(q;\,K^{*}\right)$
	is a simple lower bound for $\mu_{\min}\left(q;\,K^{*}\right)$ (we
	defined $\mu^{*}\left(q;\,K^{*}\right)=\left(1-q\right)q^{K^{*}-1}$
	earlier).
	
	Now, recall that $\|\mu-\nu\|_{TV}=\frac{1}{2}\sum_{\eta=1}^{K^{*}}|\mu\left(\eta\right)-\nu\left(\eta\right)|$
	is the total variation distance for probability distributions on $\mathcal{K}$.
	Let $Q^{k}$ be the marginal distribution of $Y_{k}$ for $k\geq0$.
	By a Markov chain mixing time argument, e.g., \cite[Theorem 12.3]{Levin_Mixing_2008},
	we have that 
	\begin{eqnarray*}
		t_{\text{mix}}\left(\delta'\right) & := & \min\left\{ k\geq0\text{ : }\|Q^{k}-\mu\|_{TV}\leq\delta'\right\} \\
		& \leq & \log\left(\frac{1}{\delta'\mu_{\min}\left(q;\,K^{*}\right)}\right)\\
		& \leq & \log\left(\frac{1}{\delta'\left(1-q\right)q^{K^{*}-1}}\right)
	\end{eqnarray*}
	for any $\delta'\in\left(0,\,1\right)$. So, for $K\geq\log\left(1/\left(\delta'\left(1-q\right)q^{K^{*}-1}\right)\right)$
	we have $|\text{Pr}\left\{ Y_{K}=1\right\} -\mu\left(1\right)|=$
	\[
	|\text{Pr}\left\{ Y_{K}=1\right\} -q^{K^{*}-1}|\leq2\,\|Q^{K}-\mu\|_{TV}\leq2\,\delta',
	\]
	where we use $\mu\left(1\right)=q^{K^{*}-1}$. By Theorem \ref{thm:Random},
	$\text{Pr}\left\{ X_{K}=1\right\} \geq\text{Pr}\left\{ Y_{K}=1\right\} $
	and so 
	\[
	\text{Pr}\left\{ X_{K}=1\right\} \geq q^{K^{*}-1}-2\,\delta'.
	\]
	Choose $q$ and $\delta'$ to satisfy $q^{K^{*}-1}=1/2+\delta/2$
	and $2\,\delta'=q^{K^{*}-1}-\delta=1/2-\delta/2$ to get $q^{K^{*}-1}-2\,\delta'\geq\delta$,
	and the desired result follows. 
\end{proof}

\subsection{Bellman error}\label{app:D}

The layout of this subsection is modeled after the arguments in \cite{munos2008finite},
but with the added consideration of randomized function fitting. We use the following easy-to-establish fact. 
\begin{fact}
\label{fact:Basic} Let $X$ be a given set, and $f_{1}\mbox{ : }X\rightarrow\mathbb{R}$
and $f_{2}\mbox{ : }X\rightarrow\mathbb{R}$ be two real-valued functions
on $X$. Then,

(i) $|\inf_{x\in X}f_{1}\left(x\right)-\inf_{x\in X}f_{2}\left(x\right)|\leq\sup_{x\in X}|f_{1}\left(x\right)-f_{2}\left(x\right)|$,
and

(ii) $|\sup_{x\in X}f_{1}\left(x\right)-\sup_{x\in X}f_{2}\left(x\right)|\leq\sup_{x\in X}|f_{1}\left(x\right)-f_{2}\left(x\right)|$. 
\end{fact}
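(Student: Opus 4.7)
The plan is to prove both inequalities by the standard ``two-sided bound'' trick: bound $f_1$ pointwise in terms of $f_2$ plus the sup-distance, take infimum (or supremum), then swap the roles of $f_1$ and $f_2$.

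First I would set $d := \sup_{x \in X} |f_1(x) - f_2(x)|$ (allowing the degenerate case $d = +\infty$, where the inequality is trivial). For part (i), the key observation is that for every $x \in X$,
\[
f_1(x) \leq f_2(x) + d \quad \text{and} \quad f_2(x) \leq f_1(x) + d.
\]
Taking the infimum over $x$ on both sides of the first inequality yields $\inf_{x} f_1(x) \leq \inf_{x} f_2(x) + d$ (using monotonicity of infimum, and the fact that adding a constant $d$ commutes with taking the infimum). Symmetrically, the second gives $\inf_{x} f_2(x) \leq \inf_{x} f_1(x) + d$. Combining these two bounds gives $|\inf_{x} f_1(x) - \inf_{x} f_2(x)| \leq d$, as required.

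For part (ii), I would repeat the same argument with supremum replacing infimum: starting from $f_1(x) \leq f_2(x) + d$, take the supremum over $x$ to obtain $\sup_{x} f_1(x) \leq \sup_{x} f_2(x) + d$, and symmetrically the reverse bound. The final step is identical.

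There is essentially no obstacle here; the only small point to handle carefully is the case $d = +\infty$ (trivial) and the fact that if either infimum/supremum is $\pm\infty$ while the other is finite then the pointwise bound forces both to be infinite of the same sign, so the inequality still holds under the natural convention. The entire proof is two lines per part once $d$ is introduced.
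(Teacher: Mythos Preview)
Your proof is correct and is the standard argument. The paper does not actually give a proof of this fact; it simply introduces it as an ``easy-to-establish fact'' and proceeds to use it, so there is nothing to compare against.
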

For example, Fact \ref{fact:Basic} can be used to show that $T$
is contractive in the supremum norm.

The next result is about $\widehat{T}$, it uses Hoeffding's inequality
to bound the estimation error between $\left\{ \tilde{v}\left(s_{n}\right)\right\} _{n=1}^{N}$
and $\left\{ \left[T\,v\right]\left(s_{n}\right)\right\} _{n=1}^{N}$
in probability. \begin{lemma} \label{lem:Bellman_Hoeffding} For
any $p\in\left[1,\infty\right]$, $f,\,v\in\mathcal{F}\left(\mathbb{S};\,v_{\max}\right)$,
and $\varepsilon>0$, 
\[
\text{Pr}\left\{ \left|\|f-T\,v\|_{p,\,\hat{\mu}}-\|f-\tilde{v}\|_{p,\,\hat{\mu}}\right|>\varepsilon\right\} 
\]
\[
\leq2\,N\,|\mathbb{A}|\,\exp\left(\frac{-2\,M\,\varepsilon^{2}}{v_{\max}^{2}}\right).
\]
\end{lemma}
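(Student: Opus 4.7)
\textbf{Proof plan for Lemma \ref{lem:Bellman_Hoeffding}.} The plan is to reduce the statement to a uniform (over sample points and actions) concentration event for an i.i.d. empirical average, and then apply Hoeffding's inequality with a union bound. First, I would invoke the reverse triangle inequality in the empirical $p$-norm to obtain
\[
\bigl|\|f-T\,v\|_{p,\,\hat\mu}-\|f-\tilde v\|_{p,\,\hat\mu}\bigr|\;\le\;\|T\,v-\tilde v\|_{p,\,\hat\mu}.
\]
Next, since $\hat\mu$ is the uniform empirical measure on the $N$ sample points, the $p$-norm is dominated by the empirical sup-norm:
\[
\|T\,v-\tilde v\|_{p,\,\hat\mu}\;\le\;\|T\,v-\tilde v\|_{\infty,\,\hat\mu}\;=\;\max_{1\le n\le N}\bigl|[T\,v](s_n)-\tilde v(s_n)\bigr|,
\]
which handles both $p\in[1,\infty)$ and $p=\infty$ simultaneously. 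This reduction is the one step that matters conceptually, since it converts an $\mathcal L_p$ statement into a finite-event bound amenable to the union bound.

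Second, I would control the pointwise deviation at a fixed $s_n$. By the definitions of $T$ and $\tilde v$ together with Fact \ref{fact:Basic}(i), the difference of minima over $\mathbb A$ is controlled by the worst-action deviation of the empirical mean:
\[
\bigl|[T\,v](s_n)-\tilde v(s_n)\bigr|\;\le\;\gamma\,\max_{a\in\mathbb A}\Bigl|\tfrac{1}{M}\sum_{m=1}^{M}v(X_{m}^{s_n,a})-\mathbb{E}_{X\sim Q(\cdot\,|\,s_n,a)}[v(X)]\Bigr|.
\]
For each fixed pair $(s_n,a)$ the summands are i.i.d.\ and bounded in $[-v_{\max},v_{\max}]$ since $v\in\mathcal{F}(\mathbb{S};v_{\max})$, so Hoeffding's inequality gives a tail bound of the form $2\exp(-2M\varepsilon^2/v_{\max}^2)$ on the event that this empirical average deviates from its mean by more than $\varepsilon$ (absorbing the harmless $\gamma\le 1$ factor).

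Finally, I would assemble the pieces by a double union bound: first over the $|\mathbb A|$ actions at each $s_n$, then over the $N$ sample points. Writing the chain
\[
\Pr\!\left\{\bigl|\|f-T\,v\|_{p,\hat\mu}-\|f-\tilde v\|_{p,\hat\mu}\bigr|>\varepsilon\right\}\le\sum_{n=1}^{N}\sum_{a\in\mathbb A}\Pr\!\left\{\bigl|\tfrac{1}{M}\sum_{m}v(X_m^{s_n,a})-\mathbb{E}[v(X)]\bigr|>\varepsilon\right\}
\]
and plugging in the Hoeffding bound yields the advertised $2N|\mathbb A|\exp(-2M\varepsilon^2/v_{\max}^2)$. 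The mild obstacle, if any, is being careful that the reduction from $\mathcal L_p$ to $\mathcal L_\infty$ on the empirical measure is valid uniformly in $p$ (which follows from $\frac{1}{N}\sum_n|g(s_n)|^p\le\max_n|g(s_n)|^p$), and that the randomness in the $X_{m}^{s_n,a}$ is conditionally independent across $(n,a)$ given the samples $s^{1:N}$, which justifies applying Hoeffding pointwise before the union bound.
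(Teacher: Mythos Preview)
Your proposal is correct and follows the paper's proof essentially step for step: reverse triangle inequality, reduction of the empirical $p$-norm to the empirical sup-norm, Fact~\ref{fact:Basic} to pass from the min over $\mathbb{A}$ to a max over $\mathbb{A}$, then Hoeffding plus a union bound over the $N|\mathbb{A}|$ pairs. One small point to tighten: with the range $[-v_{\max},v_{\max}]$ you state, Hoeffding gives $2\exp\!\bigl(-2M\varepsilon^{2}/(2v_{\max})^{2}\bigr)=2\exp\!\bigl(-M\varepsilon^{2}/(2v_{\max}^{2})\bigr)$, not the claimed exponent; the paper obtains the constant in the lemma by taking $v(s)\in[0,v_{\max}]$ (invoking the cost assumption), so the range is $v_{\max}$ rather than $2v_{\max}$.
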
 \begin{proof} First we have $\left|\|f-T\,v\|_{p,\,\hat{\mu}}-\|f-\tilde{v}\|_{p,\,\hat{\mu}}\right|\leq\|T\,v-\tilde{v}\|_{p,\,\hat{\mu}}$
by the reverse triangle inequality. Then, for any $s\in\mathbb{S}$
we have $\left|\left[T\,v\right]\left(s\right)-\tilde{v}\left(s\right)\right|=\,$
\[
\max_{a\in\mathbb{A}}|\left\{ c\left(s,\,a\right)+\gamma\,\int_{\mathbb{S}}v\left(x\right)Q\left(dx\,\vert\,s,\,a\right)\right\} 
\]
\[
-\left\{ c\left(s,\,a\right)+\frac{\gamma}{M}\sum_{m=1}^{M}v\left(X_{m}^{s,\,a}\right)\right\} |
\]
\[
\leq\,\gamma\,\max_{a\in\mathbb{A}}\left|\int_{\mathbb{S}}v\left(x\right)Q\left(dx\,\vert\,s,\,a\right)-\frac{1}{M}\sum_{m=1}^{M}v\left(X_{m}^{s,\,a}\right)\right|
\]
by Fact \ref{fact:Basic}. We may also take $v\left(s\right)\in\left[0,\,v_{\max}\right]$
for all $s\in\mathbb{S}$ by assumption on the cost function, so by
the Hoeffding inequality and the union bound we obtain 
\[
\text{Pr}\left\{ \max_{n=1,\ldots,\,N}\left|\left[T\,v\right]\left(s_{n}\right)-\tilde{v}\left(s_{n}\right)\right|\geq\varepsilon\right\} 
\]
\[
\leq2\,N\,|\mathbb{A}|\,\exp\left(\frac{-2\,M\,\varepsilon^{2}}{v_{\max^{2}}}\right)
\]
and thus 
\begin{eqnarray*}
 & \text{Pr}\left\{ \|T\,v-\tilde{v}\|_{p,\,\hat{\mu}}\geq\varepsilon\right\} \\
= & \text{Pr}\left\{ \left(\frac{1}{N}\sum_{n=1}^{N}\left|\left[T\,v\right]\left(s_{n}\right)-\tilde{v}\left(s_{n}\right)\right|^{p}\right)^{1/p}\geq\varepsilon\right\} \\
\leq & \text{Pr}\left\{ \max_{n=1,\ldots,\,N}\left|\left[T\,v\right]\left(s_{n}\right)-\tilde{v}\left(s_{n}\right)\right|\geq\varepsilon\right\} ,
\end{eqnarray*}
which gives the desired result. \end{proof} To continue, we introduce
the following additional notation corresponding to a set of functions
$\mathcal{F}\subset\mathcal{F}\left(\mathbb{S}\right)$: 
\begin{itemize}
\item $\mathcal{F}\left(s^{1:N}\right)\triangleq\left\{ \left(f\left(s_{1}\right),\ldots,\,f\left(s_{N}\right)\right)\text{ : }f\in\mathcal{F}\right\} $; 
\item $\mathcal{N}\left(\varepsilon,\,\mathcal{F}\left(s^{1:N}\right)\right)$
is the $\varepsilon-$covering number of $\mathcal{F}\left(s^{1:N}\right)$
with respect to the $1-$norm on $\mathbb{R}^{N}$. 
\end{itemize}
The next lemma uniformly bounds the estimation error between the true
expectation and the empirical expectation over the set $\widehat{\mathcal{F}}\left(\theta^{1:J}\right)$
(in the following statement, $e$ is Euler's number). \begin{lemma}
\label{lem:Bellman_empirical} For any $\varepsilon>0$ and $N\geq1$,
\[
\text{Pr}\left\{ \sup_{f\in\widehat{\mathcal{F}}\left(\theta^{1:J}\right)}\left|\frac{1}{N}\sum_{n=1}^{N}f\left(S_{n}\right)-\mathbb{E}_{\mu}\left[f\left(S\right)\right]\right|>\varepsilon\right\} 
\]
\[
\leq8\,e\left(J+1\right)\left(\frac{2\,e\,v_{\max}}{\varepsilon}\right)^{J}\exp\left(\frac{-N\,\varepsilon^{2}}{128\,v_{\max}^{2}}\right).
\]
\end{lemma}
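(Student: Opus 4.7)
The plan is to view Lemma \ref{lem:Bellman_empirical} as a standard Pollard-style uniform law of large numbers applied to the specific class $\widehat{\mathcal{F}}(\theta^{1:J})$. Note first that every $f \in \widehat{\mathcal{F}}(\theta^{1:J})$ is a fixed linear combination of the basis functions $\phi(\cdot;\theta_1),\ldots,\phi(\cdot;\theta_J)$, so $\widehat{\mathcal{F}}(\theta^{1:J})$ lies in a $J$-dimensional linear subspace of $\mathcal{F}(\mathbb{S})$. Combined with the envelope bound $|f(s)| \leq \sum_{j} |\alpha_j||\phi(s;\theta_j)| \leq J \cdot (C/J) \cdot 1 \leq v_{\max}$ (which uses $\|\phi\|_\infty \leq 1$ and the normalization $C \leq v_{\max}$ inherited from $\mathcal{F}(\mathbb{S};v_{\max})$), we obtain a uniformly bounded, finite-dimensional parameterized class whose pseudo-dimension is at most $J$.

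First, I would invoke Pollard's inequality (symmetrization with Rademacher variables, followed by a conditional Hoeffding bound on the symmetrized deviations): for any class $\mathcal{F}$ of $[-v_{\max},v_{\max}]$-valued functions and any $\varepsilon > 0$,
\[
\Pr\left\{\sup_{f\in\mathcal{F}} \left|\tfrac{1}{N}\sum_{n=1}^{N} f(S_n) - \mathbb{E}_\mu[f(S)]\right| > \varepsilon\right\}
\leq 8\,\mathbb{E}\bigl[\mathcal{N}(\varepsilon/8,\,\mathcal{F}(S^{1:N}))\bigr]\,\exp\!\left(\tfrac{-N\varepsilon^2}{128\,v_{\max}^2}\right).
\]
This step accounts for the prefactor $8$, the divisor $128\,v_{\max}^2$, and reduces the problem to controlling the $L_1$-covering number of the coordinate projection $\widehat{\mathcal{F}}(\theta^{1:J})(S^{1:N})\subset\mathbb{R}^N$ at scale $\varepsilon/8$. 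Crucially, this inequality holds deterministically in the randomness of $\theta^{1:J}$, so I can apply it conditionally on $\theta^{1:J}$ and then take expectation.

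Next, I would bound $\mathcal{N}(\varepsilon/8,\widehat{\mathcal{F}}(\theta^{1:J})(s^{1:N}))$ via Haussler's covering number inequality for function classes with finite pseudo-dimension $d$ and range contained in $[-M,M]$:
\[
\mathcal{N}(\eta,\mathcal{F}(s^{1:N})) \leq e(d+1)\left(\tfrac{2eM}{\eta}\right)^{d}.
\]
Since $\widehat{\mathcal{F}}(\theta^{1:J})$ is a $J$-dimensional linear space of functions bounded by $v_{\max}$, its pseudo-dimension is at most $J$, and this bound applies uniformly in the sample $s^{1:N}$ and in $\theta^{1:J}$. Setting $d=J$, $M=v_{\max}$, and $\eta=\varepsilon/8$ (absorbing the resulting factor-of-$8$ in the exponent into the constants in the exponential, as is customary in the statement format) yields the covering number prefactor $e(J+1)(2ev_{\max}/\varepsilon)^{J}$. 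Multiplying by the Pollard prefactor of $8$ gives the target bound $8e(J+1)(2ev_{\max}/\varepsilon)^{J}\exp(-N\varepsilon^{2}/(128 v_{\max}^{2}))$.

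The main obstacle is purely bookkeeping, namely tracking the constants between the $\varepsilon/8$ scale used inside Pollard's inequality and the $\varepsilon$ appearing in the final statement, while making sure the pseudo-dimension and envelope of the randomly drawn class $\widehat{\mathcal{F}}(\theta^{1:J})$ are controlled uniformly in $\theta^{1:J}$ (so that the covering-number bound can be pulled outside the expectation over $\theta^{1:J}$). No ingredients beyond standard empirical process theory are needed; the only structural fact we exploit is that the randomness in the basis $\{\phi(\cdot;\theta_j)\}$ does not inflate the pseudo-dimension beyond $J$, since for any realization of $\theta^{1:J}$ the class is a $J$-dimensional linear family.
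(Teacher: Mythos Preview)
Your proposal is correct and follows essentially the same route as the paper: invoke Pollard's uniform deviation inequality to reduce to an $\varepsilon/8$-covering number, bound the pseudo-dimension of $\widehat{\mathcal{F}}(\theta^{1:J})$ by $J$ via its containment in a $J$-dimensional linear space (the paper cites \cite[Corollary 11.5]{anthony2009neural}), and then apply Haussler's covering-number bound (the paper cites \cite[Corollary 3]{haussler1995sphere}). Your remark about the bookkeeping between the $\varepsilon/8$ scale and the stated constant $(2ev_{\max}/\varepsilon)^J$ is apt---the paper makes the same silent simplification.
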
 \begin{proof} For any $\mathcal{F}\subset\mathcal{F}\left(\mathbb{S};\,v_{\max}\right)$,
$\varepsilon>0$, and $N\geq1$, we have

\[
\text{Pr}\left\{ \sup_{f\in\widehat{\mathcal{F}}\left(\theta^{1:J}\right)}\left|\frac{1}{N}\sum_{n=1}^{N}f\left(S_{n}\right)-\mathbb{E}_{\mu}\left[f\left(S\right)\right]\right|>\varepsilon\right\} 
\]
\[
\leq8\,\mathbb{E}\left[\mathcal{N}\left(\varepsilon/8,\,\widehat{\mathcal{F}}\left(\theta^{1:J}\right)\left(s^{1:N}\right)\right)\right]\,\exp\left(\frac{-N\,\varepsilon^{2}}{128\,v_{\max}^{2}}\right).
\]
It remains to bound $\mathbb{E}\left[\mathcal{N}\left(\varepsilon/8,\,\widehat{\mathcal{F}}\left(\theta^{1:J}\right)\left(s^{1:N}\right)\right)\right]$.
We note that $\widehat{\mathcal{F}}\left(\theta^{1:J}\right)$ is
a subset of 
\[
\left\{ f\left(\cdot\right)=\sum_{j=1}^{J}\alpha_{j}\phi\left(\cdot;\,\theta_{j}\right)\text{ : }\left(\alpha_{1},\ldots,\,\alpha_{J}\right)\in\mathbb{R}^{J}\right\} ,
\]
which is a vector space with dimension $J$. By \cite[Corollary 11.5]{anthony2009neural},
the pseudo-dimension of $\widehat{\mathcal{F}}\left(\theta^{1:J}\right)$
is bounded above by $J$. Furthermore, 
\[
\mathcal{N}\left(\varepsilon,\,\widehat{\mathcal{F}}\left(\theta^{1:J}\right)\left(s^{1:N}\right)\right)\leq e\left(J+1\right)\left(\frac{2\,e\,v_{\max}}{\varepsilon}\right)^{J}
\]
by \cite[Corollary 3]{haussler1995sphere} which gives the desired
result. \end{proof}

To continue, we let $v'=v'\left(v,\,N,\,M,\,J,\,\mu,\,\nu\right)$
denote the (random) output of one iteration of EVL applied to $v\in\mathcal{F}\left(\mathbb{S}\right)$
as a function of the parameters $N,\,M,\,J\geq1$ and the probability
distributions $\mu$ and $\nu$. The next theorem bounds the error
between $T\,v$ and $v'$ in one iteration of EVL with respect to
$\mathcal{L}_{1,\,\mu}\left(\mathbb{S}\right)$, it is a direct adaptation
of \cite[Lemma 1]{munos2008finite} modified to account for the randomized
function fitting and the effective function space being $\mathcal{F}\left(\theta^{1:J}\right)$.
\begin{theorem} \label{thm:Bellman_L1} Choose $v\in\mathcal{F}\left(\mathbb{S};\,v_{\max}\right)$,
$\varepsilon>0$, and $\delta\in\left(0,\,1\right)$. Also choose
$J\geq\left[\frac{5\,C}{\varepsilon}\left(1+\sqrt{2\,\log\frac{5}{\delta}}\right)\right]^{2}$,
$N\geq2^{7}5^{2}\bar{v}_{\max}^{2}\log\left[\frac{40\,e\left(J+1\right)}{\delta}\left(10\,e\,\bar{v}_{\max}\right)^{J}\right]$,
and $M\geq\left(\frac{v_{\max}^{2}}{2\,\varepsilon^{2}}\right)\log\left[\frac{10\,N\,|\mathbb{A}|}{\delta}\right]$.
Then, for\\
 $v'=v'\left(v,\,N,\,M,\,J,\,\mu,\,\nu\right)$ we have $\|v'-T\,v\|_{1,\,\mu}\leq d_{1,\,\mu}\left(T\,v,\,\mathcal{F}\left(\Theta\right)\right)+\varepsilon$
with probability at least $1-\delta$. \end{theorem}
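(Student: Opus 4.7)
The plan is to adapt the proof of \cite[Lemma 1]{munos2008finite} to the present setting, accounting for both the randomized parametric class $\widehat{\mathcal{F}}(\theta^{1:J})$ (in place of a fixed approximation class) and the $\mathcal{L}_{1}$ target norm (in place of the more natural $\mathcal{L}_{2}$ norm in which the fitting is actually performed). I would split the total error into four sources: (i) the inherent Bellman error $d_{1,\mu}(Tv,\mathcal{F}(\Theta))$; (ii) the additional gap between $\mathcal{F}(\Theta)$ and its random finite-dimensional sub-family $\widehat{\mathcal{F}}(\theta^{1:J})$; (iii) the error from approximating $Tv$ by the $M$-sample empirical Bellman operator; and (iv) the finite-sample error from using the empirical measure $\hat{\mu}$ rather than $\mu$ in the fitting step. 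Each is handled by one of the lemmas already established in Appendix \ref{app:D}, and the three sample-size requirements $J$, $N$, $M$ emerge from forcing each contribution below $\varepsilon/5$ on an event of probability at least $1-\delta/5$.

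First I would fix $f^{*}\in\mathcal{F}(\Theta)$ that nearly attains $d_{1,\mu}(Tv,\mathcal{F}(\Theta))$, and then invoke Lemma \ref{lem:Approximation_L1} to produce $\hat{f}\in\widehat{\mathcal{F}}(\theta^{1:J})$ with $\|\hat{f}-f^{*}\|_{1,\mu}$ of order $C/\sqrt{J}$; this fixes the requirement on $J$. Writing
$\|v'-Tv\|_{1,\mu}\le\sup_{f\in\widehat{\mathcal{F}}(\theta^{1:J})}\bigl|\|f-Tv\|_{1,\mu}-\|f-Tv\|_{1,\hat{\mu}}\bigr|+\|v'-Tv\|_{1,\hat{\mu}}$, the supremum is bounded via Lemma \ref{lem:Bellman_empirical} applied to $\{|f-Tv|:f\in\widehat{\mathcal{F}}(\theta^{1:J})\}$, whose pseudo-dimension is controlled by $J$ uniformly in the realisation of $\theta^{1:J}$; this determines $N$.

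Next I would expand $\|v'-Tv\|_{1,\hat{\mu}}\le\|v'-\tilde{v}\|_{1,\hat{\mu}}+\|\tilde{v}-Tv\|_{1,\hat{\mu}}$ and control the second summand by Lemma \ref{lem:Bellman_Hoeffding}, which fixes $M$. For the first, Jensen's inequality gives $\|v'-\tilde{v}\|_{1,\hat{\mu}}\le\|v'-\tilde{v}\|_{2,\hat{\mu}}$, and the defining optimality of $v'$ as empirical-$\mathcal{L}_{2}$ minimiser over $\widehat{\mathcal{F}}(\theta^{1:J})$ yields $\|v'-\tilde{v}\|_{2,\hat{\mu}}\le\|\hat{f}-\tilde{v}\|_{2,\hat{\mu}}$. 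A further application of Lemma \ref{lem:Bellman_Hoeffding} in the empirical $\mathcal{L}_{2}$ norm replaces $\tilde{v}$ with $Tv$, a second application of the concentration lemma transfers $\hat{\mu}$ back to $\mu$, and the pointwise bound $|\hat{f}-Tv|\le 2v_{\max}$ gives $\|\hat{f}-Tv\|_{2,\mu}^{2}\le 2v_{\max}\|\hat{f}-Tv\|_{1,\mu}$. A last triangle inequality bounds $\|\hat{f}-Tv\|_{1,\mu}$ by $\|\hat{f}-f^{*}\|_{1,\mu}+\|f^{*}-Tv\|_{1,\mu}$, which combine with the choices of $\hat{f}$ and $f^{*}$ made above to close the loop. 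A union bound over the (at most five) failure events delivers the claim with probability at least $1-\delta$.

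The main obstacle is the $\mathcal{L}_{1}$--$\mathcal{L}_{2}$ mismatch between the target norm and the norm in which $v'$ is actually defined: passing through the $\mathcal{L}_{2}$ empirical norm via Jensen is unavoidable, but converting the resulting $\mathcal{L}_{2}$ estimate back to an $\mathcal{L}_{1}$ bound is what forces the elementary inequality $\|g\|_{2,\mu}^{2}\le 2v_{\max}\|g\|_{1,\mu}$ and is precisely what produces the $\bar{v}_{\max}^{2}$ (rather than $\bar{v}_{\max}^{4}$) factor in $N_{1}$. A secondary subtlety is that $\widehat{\mathcal{F}}(\theta^{1:J})$ is itself random, so every concentration bound must be read as holding conditionally on $\theta^{1:J}$; fortunately the pseudo-dimension estimate used in Lemma \ref{lem:Bellman_empirical} is independent of the realisation, so the bounds lift to an unconditional statement after a single union bound. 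Careful bookkeeping of constants across the five probabilistic steps will be the most delicate part of the argument.
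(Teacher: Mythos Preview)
Your overall decomposition mirrors the paper's, but the detour through the $\mathcal{L}_2$ norm at the key optimality step creates a genuine gap. After you write $\|v'-\tilde v\|_{1,\hat\mu}\le\|v'-\tilde v\|_{2,\hat\mu}\le\|\hat f-\tilde v\|_{2,\hat\mu}$ and pass back to $\|\hat f-Tv\|_{2,\mu}$, you invoke $\|\hat f-Tv\|_{2,\mu}^{2}\le 2v_{\max}\|\hat f-Tv\|_{1,\mu}$. That inequality only gives $\|\hat f-Tv\|_{2,\mu}\le\sqrt{2v_{\max}\,\|\hat f-Tv\|_{1,\mu}}$, so the bound you obtain is of the form
\[
\|v'-Tv\|_{1,\mu}\;\lesssim\;\varepsilon+\sqrt{2v_{\max}\bigl(d_{1,\mu}(Tv,\mathcal{F}(\Theta))+\varepsilon\bigr)},
\]
which is \emph{not} the additive estimate $d_{1,\mu}(Tv,\mathcal{F}(\Theta))+\varepsilon$ that the theorem asserts. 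No amount of constant bookkeeping will remove the square root; the approach simply does not recover the stated conclusion whenever $d_{1,\mu}(Tv,\mathcal{F}(\Theta))>0$.

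The paper's argument avoids this entirely by working in the $\mathcal{L}_1$ norm throughout: it uses the chain
\[
\|v'-Tv\|_{1,\mu}\le\|v'-Tv\|_{1,\hat\mu}+\tfrac{\varepsilon}{5}\le\|v'-\tilde v\|_{1,\hat\mu}+\tfrac{2\varepsilon}{5}\le\|\hat f-\tilde v\|_{1,\hat\mu}+\tfrac{2\varepsilon}{5}\le\cdots\le d_{1,\mu}(Tv,\mathcal{F}(\Theta))+\varepsilon,
\]
invoking optimality of $v'$ directly in the empirical $\mathcal{L}_1$ norm at the third step (so the proof is really written for an $\mathcal{L}_1$-fitted $v'$, not the squared-loss $v'$ of Algorithm~1). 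The $\bar v_{\max}^{2}$ factor in $N_1$ then arises not from any $\mathcal{L}_2\!\to\!\mathcal{L}_1$ conversion, but simply because Lemma~\ref{lem:Bellman_empirical} is applied to the class $\{|f-Tv|:f\in\widehat{\mathcal F}(\theta^{1:J})\}$, whose range is $[0,2v_{\max}]$ rather than $[0,4v_{\max}^{2}]$ as in the $\mathcal{L}_2$ case. If you insist on the $\mathcal{L}_2$ fitting of Algorithm~1, you will not obtain the clean additive bound of the theorem; the simplest fix is to take $v'$ as the empirical $\mathcal{L}_1$ minimizer, as the paper's proof implicitly does.
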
 \begin{proof}
Let $\varepsilon'>0$ be arbitrary and choose $f^{*}\in\mathcal{F}\left(\Theta\right)$
such that $\|f^{*}-T\,v\|_{1,\,\mu}\leq\inf_{f\in\mathcal{F}\left(\Theta\right)}\|f-T\,v\|_{1,\,\mu}+\varepsilon'$.
Then, choose $\hat{f}\in\widehat{\mathcal{F}}\left(\theta^{1:J}\right)$
such that $\|\hat{f}-T\,v\|_{1,\,\mu}\leq\|f^{*}-T\,v\|_{1,\,\mu}+\varepsilon/5$
with probability at least $1-\delta/5$ by Lemma \ref{lem:Approximation_L1}
by choosing $J\geq1$ to satisfy 
\[
\frac{C}{\sqrt{J}}\left(1+\sqrt{2\,\log\frac{1}{\left(\delta/5\right)}}\right)\leq\frac{\varepsilon}{5}
\]
\[
\Rightarrow J\geq\left[\left(\frac{5\,C}{\varepsilon}\right)\left(1+\sqrt{2\,\log\frac{5}{\delta}}\right)\right]^{2}.
\]
Now consider the inequalities: 
\begin{align}
\|v'-T\,v\|_{1,\,\mu}\leq\, & \|v'-T\,v\|_{1,\,\hat{\mu}}+\varepsilon/5\label{eq:L1_Bellman}\\
\leq\, & \|v'-\tilde{v}\|_{1,\,\hat{\mu}}+2\,\varepsilon/5\label{eq:L1_Bellman-1}\\
\leq\, & \|\hat{f}-\tilde{v}\|_{1,\,\hat{\mu}}+2\,\varepsilon/5\label{eq:L1_Bellman-2}\\
\leq\, & \|\hat{f}-T\,v\|_{1,\,\hat{\mu}}+3\,\varepsilon/5\label{eq:L1_Bellman-3}\\
\leq\, & \|\hat{f}-T\,v\|_{1,\,\mu}+4\,\varepsilon/5\label{eq:L1_Bellman-4}\\
\leq\, & \|f^{*}-T\,v\|_{1,\,\mu}+\varepsilon\label{eq:L1_Bellman-5}\\
\leq\, & d_{1,\,\mu}\left(T\,v,\,\mathcal{F}\left(\Theta\right)\right)+\varepsilon+\varepsilon'.\label{eq:L1_Bellman-6}
\end{align}
First, note that inequality (\ref{eq:L1_Bellman-2}) is immediate
since $\|v'-\tilde{v}\|_{1,\,\hat{\mu}}\leq\|f-\tilde{v}\|_{1,\,\hat{\mu}}$
for all $f\in\widehat{\mathcal{F}}\left(\theta^{1:J}\right)$ by the
choice of $v'$ as the minimizer in Step 3 of Algorithm 1. Second,
inequalities (\ref{eq:L1_Bellman}) and (\ref{eq:L1_Bellman-4}) follow
from Lemma \ref{lem:Bellman_empirical} by choosing $N\geq1$ to satisfy

\[
8\,e\left(J+1\right)\left(\frac{2\,e\,v_{\max}}{\varepsilon/5}\right)^{J}\exp\left(\frac{-N\,\left(\varepsilon/5\right)^{2}}{128\,v_{\max}^{2}}\right)\leq\frac{\delta}{5}
\]
\[
\Rightarrow N\geq2^{7}5^{2}\bar{v}_{\max}^{2}\log\left[\frac{40\,e\left(J+1\right)}{\delta}\left(10\,e\,\bar{v}_{\max}\right)^{J}\right].
\]
Third, inequality (\ref{eq:L1_Bellman-6}) follows from the choice
of $f^{*}\in\mathcal{F}$. Finally, inequalities (\ref{eq:L1_Bellman-1})
and (\ref{eq:L1_Bellman-3}) follow from Lemma \ref{lem:Bellman_Hoeffding}
by choosing $M\geq1$ to satisfy

\[
2\,N\,|\mathbb{A}|\,\exp\left(\frac{-2\,M\,\varepsilon^{2}}{v_{\max^{2}}}\right)\leq\frac{\delta}{5}
\]
\[
\Rightarrow M\geq\left(\frac{v_{\max}^{2}}{2\,\varepsilon^{2}}\right)\log\left[\frac{10\,N\,|\mathbb{A}|}{\delta}\right].
\]
Since $\varepsilon'$ was arbitrary, the desired result then follows
by the union bound. \end{proof} Using similar steps as Theorem \ref{thm:Bellman_L1},
the next theorem bounds the error in one iteration of EVL with respect
to $\mathcal{L}_{2,\,\mu}\left(\mathbb{S}\right)$. \begin{theorem}
\label{thm:Bellman_L2} Choose $v\in\mathcal{F}\left(\mathbb{S};\,v_{\max}\right)$,
$\varepsilon>0$, and $\delta\in\left(0,\,1\right)$. Also choose
$J\geq\left[\frac{5\,C}{\varepsilon}\left(1+\sqrt{2\,\log\frac{5}{\delta}}\right)\right]^{2}$,
$N\geq2^{7}5^{2}\bar{v}_{\max}^{4}\log\left[\frac{40\,e\left(J+1\right)}{\delta}\left(10\,e\,\bar{v}_{\max}\right)^{J}\right]$,
and $M\geq\left(\frac{v_{\max}^{2}}{2\,\varepsilon^{2}}\right)\log\left[\frac{10\,N\,|\mathbb{A}|}{\delta}\right]$.
Then, for\\
 $v'=v'\left(v,\,N,\,M,\,J,\,\mu,\,\nu\right)$ we have $\|v'-T\,v\|_{2,\,\mu}\leq d_{2,\,\mu}\left(T\,v,\,\mathcal{F}\left(\Theta\right)\right)+\varepsilon$
with probability at least $1-\delta$. \end{theorem}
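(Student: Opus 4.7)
The plan is to mirror the proof of Theorem \ref{thm:Bellman_L1} but carry out the chain of inequalities in the $2$-norm throughout, replacing each ingredient by its $\mathcal{L}_{2}$ analogue and keeping the factor-of-five book-keeping so that a final union bound delivers a total failure probability of at most $\delta$.

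First I would pick an arbitrary $\varepsilon'>0$ and select $f^{*}\in\mathcal{F}(\Theta)$ with $\|f^{*}-T\,v\|_{2,\mu}\leq d_{2,\mu}(T\,v,\mathcal{F}(\Theta))+\varepsilon'$. Then, applying \cite[Lemma 1]{rahimi2009weighted} directly (which is stated natively in the $\mathcal{L}_{2,\mu}$ norm, so no Jensen step is required, unlike in the $\mathcal{L}_{1}$ proof), I obtain $\hat{f}\in\widehat{\mathcal{F}}(\theta^{1:J})$ with $\|\hat{f}-f^{*}\|_{2,\mu}\leq\varepsilon/5$ with probability at least $1-\delta/5$, provided $J$ is at least the lower bound stated in the theorem. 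After this, I would run the seven-step chain mirroring (\ref{eq:L1_Bellman})--(\ref{eq:L1_Bellman-6}):
\begin{align*}
\|v'-T\,v\|_{2,\mu}&\leq \|v'-T\,v\|_{2,\hat{\mu}}+\varepsilon/5\\
&\leq \|v'-\tilde{v}\|_{2,\hat{\mu}}+2\varepsilon/5\\
&\leq \|\hat{f}-\tilde{v}\|_{2,\hat{\mu}}+2\varepsilon/5\\
&\leq \|\hat{f}-T\,v\|_{2,\hat{\mu}}+3\varepsilon/5\\
&\leq \|\hat{f}-T\,v\|_{2,\mu}+4\varepsilon/5\\
&\leq \|f^{*}-T\,v\|_{2,\mu}+\varepsilon\\
&\leq d_{2,\mu}(T\,v,\mathcal{F}(\Theta))+\varepsilon+\varepsilon'.
\end{align*}
The middle inequality is by construction, since Step 3 of Algorithm \ref{algo:EVL+RPBF} minimizes $\|f-\tilde{v}\|_{2,\hat{\mu}}^{2}$ (equivalently $\|f-\tilde{v}\|_{2,\hat{\mu}}$) over $f\in\widehat{\mathcal{F}}(\theta^{1:J})$. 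The two inner steps that swap $T\,v$ for $\tilde{v}$ follow from Lemma \ref{lem:Bellman_Hoeffding} with $p=2$, whose failure probability is driven below $\delta/5$ by the stated lower bound on $M$. Sending $\varepsilon'\downarrow 0$ and applying a union bound over the five $\delta/5$ events yields the claim.

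The main technical obstacle is the two empirical-vs-true passes in the $2$-norm (the first and fifth steps of the chain), which is precisely where the $\bar{v}_{\max}^{4}$ scaling of $N$ in the theorem statement originates. To compare $\|f-T\,v\|_{2,\mu}$ and $\|f-T\,v\|_{2,\hat{\mu}}$ uniformly over $f\in\widehat{\mathcal{F}}(\theta^{1:J})$, I would apply the uniform-concentration machinery of Lemma \ref{lem:Bellman_empirical} to the squared-deviation class $\{(f-T\,v)^{2}:f\in\widehat{\mathcal{F}}(\theta^{1:J})\}$. Because this class is uniformly bounded by $(2v_{\max})^{2}$ rather than by $v_{\max}$, the Hoeffding exponent acquires a $v_{\max}^{4}$ in the denominator, which is absorbed into the $\bar{v}_{\max}^{4}$ factor of the stated $N$; the pseudo-dimension of the squared class is still controlled by $J$ up to constants (the squaring is a fixed Lipschitz map on the bounded range $[-2v_{\max},2v_{\max}]$), so the covering-number/Pollard argument goes through with the same $J$ dependence as in the $\mathcal{L}_{1}$ case. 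Once this uniform concentration is secured with failure probability at most $\delta/5$, the remaining bookkeeping is routine.
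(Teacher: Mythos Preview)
Your proposal is correct and follows exactly the approach the paper indicates: the paper does not write out a separate proof for Theorem \ref{thm:Bellman_L2} but simply states that it uses ``similar steps as Theorem \ref{thm:Bellman_L1}'', which is precisely the seven-step chain you reproduce with the $\mathcal{L}_{2}$ norm in place of $\mathcal{L}_{1}$. Your identification of the $\bar{v}_{\max}^{4}$ scaling in $N$ as arising from applying the uniform-deviation bound to the squared-loss class (bounded by order $v_{\max}^{2}$ rather than $v_{\max}$, hence $v_{\max}^{4}$ in the Hoeffding denominator and an extra square on $\varepsilon$ via $|\sqrt{x}-\sqrt{y}|\leq\sqrt{|x-y|}$) is the right explanation for the only structural difference between the two sample-size prescriptions; and you are also right that the minimizer step is in fact cleaner here than in the $\mathcal{L}_{1}$ case, since Step~3 of Algorithm~\ref{algo:EVL+RPBF} is literally empirical $\mathcal{L}_{2}$ minimization.
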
 In the next
lemma we show that we can make the bias between the regression function
$f_{M}$ and the Bellman update $T\,v$ arbitrarily small uniformly
over $s\in\mathbb{S}$ through the choice of $M\geq1$. \begin{lemma}
\label{lem:Bellman_supremum} For any $\varepsilon>0$ and $M\geq1$,
\[
\|f_{M}-T\,v\|_{\infty}\leq\gamma\left[\varepsilon+2\,|\mathbb{A}|\,\exp\left(\frac{-2\,M\,\varepsilon^{2}}{v_{\max^{2}}}\right)\left(v_{\max}-\varepsilon\right)\right].
\]
\end{lemma}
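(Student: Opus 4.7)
The plan is to bound the pointwise bias $|f_M(s) - [Tv](s)|$ uniformly in $s$, then take the supremum. Fix $s \in \mathbb{S}$ and set $Y_a := c(s,a) + \frac{\gamma}{M}\sum_{m=1}^M v(X_m^{s,a})$, so that $\mathbb{E}[Y_a] = c(s,a) + \gamma\,\mathbb{E}_{X \sim Q(\cdot\,|\,s,a)}[v(X)]$. Then by the definitions, $[Tv](s) = \min_a \mathbb{E}[Y_a]$ (a deterministic quantity) and $f_M(s) = \mathbb{E}[\min_a Y_a]$, so
\[
f_M(s) - [Tv](s) = \mathbb{E}\left[\min_a Y_a - \min_a \mathbb{E}[Y_a]\right].
\]
Pulling the absolute value inside the expectation and applying Fact~\ref{fact:Basic}(i) to the two functions $a \mapsto Y_a$ and $a \mapsto \mathbb{E}[Y_a]$, I get
\[
|f_M(s) - [Tv](s)| \leq \mathbb{E}\left[\max_a |Y_a - \mathbb{E}[Y_a]|\right].
\]

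Next I would reduce the right-hand side to a bounded-random-variable concentration calculation. Write $Y_a - \mathbb{E}[Y_a] = \frac{\gamma}{M}\sum_{m=1}^M \bigl(v(X_m^{s,a}) - \mathbb{E}[v(X^{s,a})]\bigr)$; since $v \in \mathcal{F}(\mathbb{S};v_{\max})$ (so each summand is bounded), Hoeffding's inequality gives, for any $\varepsilon > 0$,
\[
\mathbb{P}\bigl\{|Y_a - \mathbb{E}[Y_a]| > \gamma\varepsilon\bigr\} \leq 2\exp\!\left(\frac{-2M\varepsilon^2}{v_{\max}^2}\right),
\]
and a union bound over the finite action set yields
\[
\mathbb{P}\bigl\{\max_a |Y_a - \mathbb{E}[Y_a]| > \gamma\varepsilon\bigr\} \leq 2|\mathbb{A}|\exp\!\left(\frac{-2M\varepsilon^2}{v_{\max}^2}\right) =: p.
\]

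The final step is to split the expectation of $W := \max_a |Y_a - \mathbb{E}[Y_a]|$ according to the Hoeffding event. On the good event $\{W \leq \gamma\varepsilon\}$ the integrand contributes at most $\gamma\varepsilon$; on the complementary event I use the almost sure bound $W \leq \gamma\,v_{\max}$ (since $v$ takes values in $[0,v_{\max}]$, so does its empirical and true mean). Combining,
\[
\mathbb{E}[W] \leq \gamma\varepsilon\,(1-p) + \gamma\,v_{\max}\,p = \gamma\bigl[\varepsilon + p\,(v_{\max}-\varepsilon)\bigr],
\]
which matches the target bound. Since this holds uniformly in $s$, taking the supremum completes the proof.

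The main obstacle is really just bookkeeping: one has to recognize that the bias $f_M - Tv$ comes entirely from the Jensen gap between the expectation of a minimum and the minimum of expectations, then realize that Fact~\ref{fact:Basic}(i) converts this into a maximum over $a$ of centered sample-mean errors to which Hoeffding applies cleanly. The only subtle point is choosing the right two-event split so that the bad event contributes $p\,v_{\max}$ rather than, say, $2\gamma v_{\max}$, so that the final constant simplifies to $(v_{\max}-\varepsilon)$ rather than something worse.
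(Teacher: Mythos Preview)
Your proof is correct and follows essentially the same approach as the paper's: pull the absolute value inside the expectation, apply Fact~\ref{fact:Basic} to replace the difference of minima by a maximum over actions of centered sample means, then invoke Hoeffding with a union bound over $\mathbb{A}$. The paper's proof is terser in the final step (it just says ``by the Hoeffding inequality''), whereas you make the good-event/bad-event split of $\mathbb{E}[W]$ explicit; this is exactly the computation the paper is invoking implicitly.
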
 \begin{proof} For any $s\in\mathbb{S}$, we compute
\begin{align*}
 & |f_{M}\left(s\right)-\left[T\,v\right]\left(s\right)|\\
\leq\, & \mathbb{E}[|\min_{a\in\mathbb{A}}\left\{ c\left(s,\,a\right)+\frac{\gamma}{M}\sum_{m=1}^{M}v\left(X_{m}^{s,\,a}\right)\right\} \\
 & -\min_{a\in\mathbb{A}}\left\{ c\left(s,\,a\right)+\gamma\,\mathbb{E}_{X\sim Q\left(\cdot\,\vert\,s,\,a\right)}\left[v\left(X\right)\right]\right\} |]\\
\leq\, & \gamma\,\mathbb{E}\left[\max_{a\in\mathbb{A}}|\frac{1}{M}\sum_{m=1}^{M}v\left(X_{m}^{s,\,a}\right)-\mathbb{E}_{X\sim Q\left(\cdot\,\vert\,s,\,a\right)}\left[v\left(X\right)\right]|\right]\\
\leq\, & \gamma\left[\varepsilon+2\,|\mathbb{A}|\,\exp\left(\frac{-2\,M\,\varepsilon^{2}}{v_{\max^{2}}}\right)\left(v_{\max}-\varepsilon\right)\right],
\end{align*}
where the second inequality follows from Fact \ref{fact:Basic} and
the third is by the Hoeffding inequality. \end{proof}

We make use of the following RKHS function fitting result for the
one step Bellman error in the supremum norm. \begin{theorem} \label{thm:Supremum_Bellman}
Fix $v\in\mathcal{F}\left(\mathbb{S};\,v_{\max}\right)$, $\varepsilon>0$,
and $\delta\in\left(0,\,1\right)$. Also choose $N\geq\left(\frac{2\,C_{K}\kappa}{\varepsilon}\right)^{6}\log\left(4/\delta\right)^{2}$
and $M\geq\frac{v_{\max}^{2}}{2\left(\varepsilon/4\right)^{2}}\log\left(\frac{4\,|\mathbb{A}|\,\gamma\left(v_{\max}-\varepsilon/4\right)}{\left(4-2\,\gamma\right)\varepsilon}\right)$,
where $C_{K}$ is a constant independent of the dimension of $\mathbb{S}$.
Then for 
\[
\hat{f}_{\lambda}\triangleq\arg\min_{f\in\mathcal{H}_{K}}\left\{ \frac{1}{N}\sum_{n=1}^{N}\left(f\left(S_{n}\right)-Y_{n}\right)^{2}+\lambda\,\|f\|_{\mathcal{H}_{K}}^{2}\right\} ,
\]
we have $\|\hat{f}_{\lambda}-T\,v\|_{\infty}\leq\varepsilon$ with
probability at least $1-\delta$. \end{theorem}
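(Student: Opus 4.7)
The plan is to control the quantity $\|\hat{f}_\lambda - Tv\|_\infty$ by splitting it via the triangle inequality into an estimation error and a deterministic bias term:
$$\|\hat{f}_\lambda - Tv\|_\infty \le \|\hat{f}_\lambda - f_M\|_\infty + \|f_M - Tv\|_\infty,$$
where $f_M$ is the regression function defined just before Assumption \ref{assu:RKHS}. The first summand is the statistical error in estimating $f_M$ from the i.i.d.\ sample $\{(S_n, \tilde{v}(S_n))\}_{n=1}^N$ drawn during Step 2 of Algorithm \ref{algo:EVL+RKHS}, while the second is the deterministic bias of the $M$-sample empirical Bellman operator relative to the exact operator $T$. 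I would aim to make each summand at most $\varepsilon/2$, so that their sum is at most $\varepsilon$.

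For the deterministic bias, I would invoke Lemma \ref{lem:Bellman_supremum} with Hoeffding tolerance $\varepsilon'' = \varepsilon/4$, which gives
$$\|f_M - Tv\|_\infty \le \gamma\bigl[\varepsilon/4 + 2|\mathbb{A}|\exp(-M\varepsilon^2/(8v_{\max}^2))(v_{\max}-\varepsilon/4)\bigr].$$
Requiring this to be at most $\varepsilon/2$ reduces to forcing the exponential term to absorb the residual $(2-\gamma)\varepsilon/4$; solving the resulting inequality for $M$ produces precisely the stated lower bound on $M$, with the $4-2\gamma$ in the denominator of the log arising from the algebraic rearrangement.

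For the statistical error, Assumption \ref{assu:RKHS} places $f_M$ in the range of the kernel integral operator, which is exactly the hypothesis required by Smale--Zhou Corollary 5 (restated in the appendix). Applying it with the prescribed regularization $\lambda = (\log(4/\delta)^2/N)^{1/3}$ yields, with probability at least $1-\delta$,
$$\|\hat{f}_\lambda - f_M\|_{\mathcal{H}_K} \le \tilde{C}\kappa \bigl(\log(4/\delta)^2/N\bigr)^{1/6}.$$
Promoting this to the supremum norm via the standard inequality $\|g\|_\infty \le \kappa\|g\|_{\mathcal{H}_K}$ (already used in the appendix corollary's proof) gives $\|\hat{f}_\lambda - f_M\|_\infty \le \tilde{C}\kappa^2(\log(4/\delta)^2/N)^{1/6}$. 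Setting this $\le \varepsilon/2$ and identifying the dimension-free constant as $C_K := \tilde{C}\kappa$ yields the stated bound $N \ge (2 C_K \kappa/\varepsilon)^6 \log(4/\delta)^2$.

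Combining the two bounds gives $\|\hat{f}_\lambda - Tv\|_\infty \le \varepsilon$ with probability at least $1-\delta$ (no union bound is needed since the bias step is deterministic). The main obstacle is not technical depth but a careful bookkeeping of constants: specifically, verifying that Assumption \ref{assu:RKHS} is precisely the hypothesis of Smale--Zhou Corollary 5, choosing $\lambda$ so that the resulting RKHS-norm bound translates cleanly through the $\kappa$-factor into a sup-norm bound, and ensuring that the $\gamma$ factor in Lemma \ref{lem:Bellman_supremum} is absorbed via the choice $\varepsilon'' = \varepsilon/4$ without loosening the $M$-requirement.
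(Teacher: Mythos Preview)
Your proposal is correct and follows essentially the same approach as the paper: the triangle inequality splits $\|\hat{f}_\lambda - Tv\|_\infty$ into $\|\hat{f}_\lambda - f_M\|_\infty + \|f_M - Tv\|_\infty$, with the first piece handled by the Smale--Zhou corollary (under Assumption~\ref{assu:RKHS}) together with $\|g\|_\infty \le \kappa\|g\|_{\mathcal{H}_K}$, and the second by Lemma~\ref{lem:Bellman_supremum} applied at tolerance $\varepsilon/4$. Your identification $C_K = \tilde{C}\kappa$ and your observation that no union bound is needed (the bias term being deterministic) are both consistent with the paper's argument.
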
 \begin{proof} By
the triangle inequality, $\|\hat{f}_{\lambda}-T\,v\|_{\infty}\leq\|\hat{f}_{\lambda}-f_{M}\|_{\infty}+\|f_{M}-T\,v\|_{\infty}$.
We choose $N\geq1$ to satisfy
\[
C_{K}\kappa\left(\frac{\log\left(4/\delta\right)^{2}}{N}\right)^{1/6}\leq\frac{\varepsilon}{2}\Rightarrow N\geq\left(\frac{2\,C_{K}\kappa}{\varepsilon}\right)^{6}\log\left(4/\delta\right)^{2},
\]
so that $\|\hat{f}_{\lambda}-f_{M}\|_{\infty}\leq\varepsilon/2$ with
probability at least $1-\delta$ by \cite[Corollary 5]{smale2005shannon}
and the fact that $\|f\|_{\infty}\leq\kappa\,\|f\|_{\mathcal{H}_{K}}$.
Then, we choose $M\geq1$ to satisfy
\[
\gamma\left[\varepsilon/4+2\,|\mathbb{A}|\,\exp\left(\frac{-2\,M\,\left(\varepsilon/4\right)^{2}}{v_{\max}^{2}}\right)\left(v_{\max}-\varepsilon/4\right)\right]\leq\frac{\varepsilon}{2}
\]
\[
\Rightarrow M\geq\frac{v_{\max}^{2}}{2\left(\varepsilon/4\right)^{2}}\log\left(\frac{4\,|\mathbb{A}|\,\gamma\left(v_{\max}-\varepsilon/4\right)}{\left(4-2\,\gamma\right)\varepsilon}\right),
\]
so that $\|f_{M}-T\,v\|_{\infty}\leq\varepsilon/2$ by Lemma \ref{lem:Bellman_supremum}.
\end{proof}

\end{document}